\newtheorem{theorem}{Theorem}[section]
\newtheorem{lemma}[theorem]{Lemma}
\newtheorem{definition}[theorem]{Definition}
\newtheorem{proposition}[theorem]{Proposition}
\newcommand{\manbound}{\partial_{\mathrm{m}}}
\DeclareMathOperator{\id}{id}
\DeclareMathOperator{\im}{Im}
\DeclareMathOperator{\const}{const}
\DeclareMathOperator{\dist}{dist}
\DeclareMathOperator{\Cone}{Cone}
\newcommand{\argmin}{\mathrm{arg\, min}}
\def\:{\colon}
\newcommand{\R}{\mathbb{R}}
\newcommand{\Q}{\mathbb{Q}}
\newcommand{\Z}{\mathbb{Z}}
\newcommand{\F}{\mathbb{F}}
\newcommand{\heading}[1]{\vspace{1ex}\par\noindent{\bf\boldmath #1}}
\def\indef#1{\emph{#1}}
\def\ZrFr{Z_r^{\mathrm{fr}}}
\def\Zrf{Z_{<r}(f)}
\def\gr{\pi}
\begin{document}

% Page heads
\markboth{P. Franek, M. Kr\v c\'al, H. Wager}{Persistence of Zero Sets}

% Title portion
\title{Persistence of Zero Sets}

\author{Peter Franek, Marek Kr\v c\'al \\
IST Austria
}
\maketitle

\begin{abstract}
We study robust properties of zero sets of  continuous maps $f\:X\to\R^n$. %that are  invariant under perturbations of $f$. 
Formally, we analyze the family $\Zrf=\{g^{-1}(0):\,\,\|g-f\|<r\}$ of all zero sets of all continuous maps $g$ closer to $f$
than $r$ in the max-norm.
All of these sets are outside $A:=\{x: \,|f(x)|\geq r\}$ and we claim that $Z_{<r}(f)$ is fully determined by $A$ and an~element of
certain \emph{cohomotopy group} which (by a recent result) is computable whenever the dimension of $X$
is at most $2n-3$.

By considering all $r>0$ simultaneously, the pointed cohomotopy groups form a persistence module---a structure leading to 
persistence diagrams as in the case of \emph{persistent homology} or \emph{well groups}.
Eventually, we get a descriptor of persistent robust properties
of zero sets that has better descriptive power (Theorem A) and better computability status (Theorem B)
than the established well diagrams.\footnote{Admittedly, well diagrams cover broader settings than we do here, 
but their application as a property and descriptor of $\Zrf$  is the most studied one.}
Moreover, if we endow every point of each zero set with gradients of the perturbation, the robust description of the zero sets by elements of cohomotopy groups is in some sense the best possible
(Theorem C).
\end{abstract}

\section{Introduction}
Vector valued continuous maps $f\:X\to \R^n$ are ubiquitous in modeling phenomena in science and technology. Their zero sets  $f^{-1}(0)$ play often an~important role in those models. Vector fields can represent dynamical systems, and their zeros are their key property. Similarly, maps $X\to\R^n$ can represent measured
continuous physical quantities such as MRI or ultrasound scans and the preimages of points in $\R^n$ correspond to isosurfaces.  
In nonlinear optimization, the set of feasible solutions is described as the zero set $f^{-1}(0)$  of a given continuous map $f\:X\to\R^n$.

In practice, we often have only access to approximations of those maps.
Either they are sampled by imprecise measurements or inferred from models that only approximate reality. 
Thus we need to understand their zero sets in a robust way. 
This is formalized as follows.  For a continuous map \(f:X\to \R^n\) defined on a topological space $X$ 
and a robustness radius $r\in\R^+$  we define 
\begin{align*}
& Z_{<r}(f):=\{g^{-1}(0)\mid g\:X\to\R^n \text{ such that }\|f-g\|<r\}
%\quad\mathrm{and}
\end{align*}
%or alternatively,  $Z_{\leq r}(f):=\{g^{-1}(0)\mid g\: X\to \R^n$ 
%such that $\|f-g\|\leq r\}$ where
%\footnote{From the practical point of view, considering  $\Zrf$ only is good enough and turns out to be less technically demanding. However, the established convention in the case of \emph{well groups} (see below) is built on the family $Z_{\le r}(f)$. Therefore we address that variant as well.} 
where \(\|\cdot\|\) is the max-norm with respect to some fixed norm $|\cdot|$ in \(\R^n\). 

\begin{wrapfigure}{r}{0.5\textwidth}
\begin{center}
\includegraphics[scale=1.1]{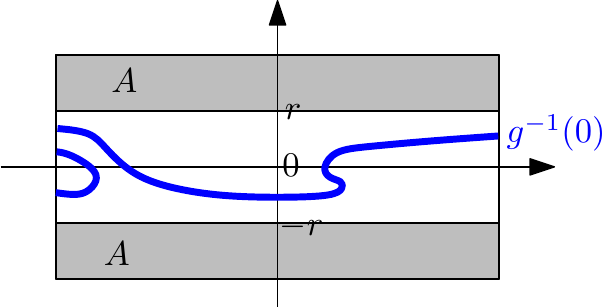}
\caption{The zero set of the scalar function $f(x,y)=y$ is the $x$-axis. 
For $r>0$, any $r$-perturbation $g$ has a~zero set $g^{-1}(0)$ that separates the two
components of $A:=\{x: \,|f(x)|\geq r\}$. Conversely, any closed set $Z\subseteq X$ disjoint from $A$ that separates these two components can be realized
as the zero set of some $r$-perturbation $g$ of $f$.}
\label{fig:fig-intro}
\end{center}
\end{wrapfigure} 
Any function $g$ with $\|g-f\|<r$ will be called an \emph{$r$-perturbation} and any  property of $f^{-1}(0)$ that is shared with 
$g^{-1}(0)$ for all $r$-perturbations $g$ is called an \emph{$r$-robust} property.
Invariants of zero sets that are preserved by $r$-perturbations translate to properties of $Z_{<r}(f)$: in particular, the problem 
of an $r$-robust \emph{existence} of zero translates to \emph{non-emptiness} of all sets in $Z_{<r}(f)$.

The problem $\emptyset\in Z_{<r}(f)$ has been analyzed from the algorithmic viewpoint when $X$ is a finite simplicial complex and $f$ is 
piecewise linear~\cite{nondec}. The results are surprising and far from obvious: the non-emptiness of all sets in $Z_{<r}(f)$ 
is algorithmically decidable if $\dim X \leq 2n-3$ or $n=1$ or $n$ is even.
Conversely, $\emptyset\in Z_{<r}(f)$ is algorithmically undecidable for odd $n\geq 3$. 
This has been shown by a reduction to the topological extension problem for maps to spheres
and to recent (un)decidability results for the latter~\cite{polypost,ext-hard,Krcal-thesis,vokrinek:oddspheres}.

However, \emph{non-emptiness} of all sets in $Z_{<r}(f)$ is only the simplest topological property, see Figure~\ref{fig:fig-intro} for a slightly more
interesting property.
Thus a~natural question is the following.

\medskip

%\begin{tabular}{|c|}
%\hline  \\
``Which properties of zero set of $f$ are preserved under perturbations?''
%\hline
%\end{tabular}
\medskip

A notable attempt to attack this problem is the concept of \emph{well group}, based on studying
homological properties of zero sets. However, well groups do not constitute a~complete invariant of $Z_{<r}(f)$:  some properties
of zero sets are not captured by well groups, see~\cite[Thm. D,E]{well-group}.\footnote{Moreover, the computability of well groups is 
only known in some special cases.}
This paper is an attempt to answer the above question via means of homotopy theory. 
As we will see in Theorem C, under some mild assumptions, zero sets of smooth $r$-perturbations that are transverse to zero form
a~framed cobordism class of submanifolds of $X$. 
This suggests that homotopy theory is indeed the right tool for studying this problem and that homology alone is not sufficient.

\section{Statement of the results.}
\label{s:statements}

\heading{Robustness through lenses of homotopy theory.}
The  surprising recipe is not to analyze $f$ where its values are small, but rather where they are big---namely, of norm at least $r$. 
Therefore we need to refer to the set $A=\{x: |f(x)|\geq r\}$ on which any $r$-perturbation of $f$ is nonzero. 
Another surprising fact is that the analysis of $f|_A$ needs to be done only up to homotopy.\footnote{We say that maps $f,g\:X\to Y$ are homotopic whenever $f$ can be ``continuously deformed'' into $g$, that is, there is $H\:X\times[0,1]\to Y$ such that $H(\cdot,0)=f$ and $H(\cdot,1)=g$.} 
%As we will show, the condition $\|g-f\|<r$ in the definition of \(\Zrf\) can be replaced by $$g|_A\:A\to \{x\:|x|\ge r\}\text{ is homotopy equivalent to } f|_A\text{ (see Lemma~\ref{l:perturb-ext}).}$$
%Informally, every $r$-perturbation $g$ of $f$ can be replaced by a \emph{homotopy perturbation} $h$ of $f$---that is, $h|_A\sim f|_A\:A\to\{x\:|x|\ge r\}$---with the same zero set and vice versa (see Lemma~\ref{l:perturb-ext}). \cm{alternativa predchozi vety:} 
For, informally speaking, the notion of $r$-perturbation can be replaced by a corresponding notion of 
\emph{homotopy $r$-perturbation},{\footnote{A map $h\:X\to \R^n$ is a homotopy perturbation of $f$ whenever $h|_A$ is homotopic to $f|_A$ as maps to $\R^n\setminus\{0\}, $ i.e., the homotopy avoids zero.} see Lemma~\ref{l:perturb-ext}.
Consequently, we get that all the robust properties of $f^{-1}(0)$ are determined by the homotopy 
class of $f|_A$---a much more coarse and robust descriptor than the original map $f$.

{\def\thetheorem{A}
\addtocounter{theorem}{-1} 
\begin{theorem}
\label{t:hom-class} 
Let $X$ be a compact Hausdorff space, and $r>0$ be fixed. 
Then 
\begin{enumerate}
\item[(1)] The family $Z_{< r}(f)$ is determined by $A:=\{x: |f(x)|\geq r\}$ and the homotopy class of $\bar f\:A\to S^{n-1}$ 
defined by $\bar f(x):=f(x)/|f(x)|$.
\item[(2)] If  the pair $A\subseteq X$ can be triangulated and $\dim X\leq 2n-3$, then  $Z_{<r}(f)$
is determined by $A$ and the homotopy class of the quotient $f_{/A}: X/A\to S^n\simeq \R^n/\{x: |x|\geq r\}$ induced by the map of pairs 
$f: (X,A)\to (\R^n, \{x: |x|\geq r\})$.
\end{enumerate}
Once the space $B:=\{x: |f(x)|=r\}$ is given\footnote{In a ``generic'' case, $B=\partial A$  so then it is already encoded in $A$ in some sense.} in addition to the information above, then also $Z_{\le r}(f):=\{g^{-1}(0)\mid g\:X\to\R^n
\text{ such that }\|f-g\|\le r\}$ is determined by the homotopy classes specified in (1) or (2).
\end{theorem}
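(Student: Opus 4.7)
The plan is as follows. Part~(1) follows from the forthcoming Lemma~\ref{l:perturb-ext} together with a straight-line homotopy observation: for any $r$-perturbation $g$ and $x\in A$, one has $|(1-t)f(x)+tg(x)|\ge |f(x)|-t|g(x)-f(x)|>0$, so $g|_A\simeq f|_A$ as maps into $\R^n\setminus\{0\}$. Granting the lemma's converse---that any $h\colon X\to\R^n$ with $h|_A\simeq f|_A$ in $\R^n\setminus\{0\}$ has $h^{-1}(0)$ realized as $g^{-1}(0)$ for some actual $r$-perturbation $g$---the family $\Zrf$ depends only on $A$ and the class $[f|_A]\in[A,\R^n\setminus\{0\}]$. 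Via the deformation retraction $x\mapsto x/|x|$, this class is $[\bar f]\in[A,S^{n-1}]$.

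For part~(2), I will compare $[\bar f]$ and $[f_{/A}]$ through the Puppe cofiber sequence $A\hookrightarrow X\to X/A\to\Sigma A\to\Sigma X$. Applying $[-,S^n]$ and invoking the Freudenthal suspension theorem in the stable range $\dim X\le 2n-3$, so that $[\Sigma Y,S^n]\cong[Y,S^{n-1}]$ for $Y=A,X$ and all sets involved are abelian groups, yields an exact sequence
\begin{equation*}
[X,S^{n-1}]\xrightarrow{\iota^*}[A,S^{n-1}]\xrightarrow{\delta}[X/A,S^n]\to [X,S^n]\to[A,S^n].
\end{equation*}
Direct inspection of $\delta$, using the radial retraction $\R^n\to\{|x|\le r\}$ and the identification $\{|x|\le r\}/\{|x|=r\}=S^n$, identifies $\delta[\bar f]=[f_{/A}]$ (the map $f$ itself provides the extension that the construction of $\delta$ requires). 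Thus $[f_{/A}]$ determines $[\bar f]$ only modulo the subgroup $\ker\delta=\im\iota^*$.

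I then argue this ambiguity does not affect $\Zrf$. The key step is a refinement of part~(1): $Z\in\Zrf$ iff $[\bar f]$ lies in the image $R_Z:=\im\bigl([X\setminus Z,S^{n-1}]\to[A,S^{n-1}]\bigr)$. The forward implication uses $g/|g|\colon X\setminus Z\to S^{n-1}$ for any $r$-perturbation $g$ with $g^{-1}(0)=Z$; the converse takes a map $\xi\colon X\setminus Z\to S^{n-1}$ extending $\bar f$ and produces $g(x)=\lambda(x)\xi'(x)$, where $\xi'$ is adjusted via a collar to equal $\bar f$ on $A$ exactly, and $\lambda\ge 0$ is continuous, vanishes precisely on $Z$, equals $|f|$ on $A$, and is chosen to keep $\|g-f\|<r$ throughout. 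Since the restriction $[X,S^{n-1}]\to[A,S^{n-1}]$ factors through $[X\setminus Z,S^{n-1}]$, we have $\im\iota^*\subseteq R_Z$; the subgroup $R_Z$ is therefore a union of $\ker\delta$-cosets, so membership $[\bar f]\in R_Z$ depends only on the coset $[\bar f]+\ker\delta$, hence only on $[f_{/A}]$. The claim about $Z_{\le r}(f)$ follows identically once one notes that its zero sets may intersect $B=\{|f|=r\}$; specifying $B$ pins down the admissible locus $X\setminus(A\setminus B)$, and the homotopy-class analysis on $A$ is unchanged.

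The main obstacle is the converse scaling construction in the characterization $Z\in\Zrf\Leftrightarrow[\bar f]\in R_Z$: simultaneously enforcing $g^{-1}(0)=Z$ and $\|g-f\|<r$ requires a delicate interpolation, since near $A$ one needs $g$ close to $f$ (forcing $\lambda\approx|f|$) while near $Z$ one needs $\lambda\to 0$, and the intermediate directions prescribed by $\xi'$ must not push $|g-f|$ above $r$. A secondary point is verifying that Freudenthal gives a genuine bijection (not only a surjection) at the stated bound $\dim X\le 2n-3$; the triangulability assumption on the pair $(X,A)$ is what lets one apply cellular obstruction arguments in this borderline range.
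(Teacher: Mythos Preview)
Your Part~(1) matches the paper exactly via Lemma~\ref{l:perturb-ext}. For Part~(2), your characterization $Z\in\Zrf\iff[\bar f]\in R_Z:=\im\bigl([X\setminus Z,S^{n-1}]\to[A,S^{n-1}]\bigr)$ is correct, and the reduction to showing that $R_Z$ is a union of $\im\iota^*$-cosets is essentially the paper's route (it is the special case $Z_2=\emptyset$ of Lemma~\ref{l:stronger}). The genuine gap is where you write ``the subgroup $R_Z$'': the space $X\setminus Z$ is not a cell complex, so $[X\setminus Z,S^{n-1}]$ carries no a~priori Borsuk cohomotopy group structure, the $\boxplus$ operation is undefined there, and the restriction to $[A,S^{n-1}]$ is not obviously a homomorphism. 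This is exactly the technical obstacle the paper confronts in proving Lemma~\ref{l:stronger}, and it resolves it by exhausting $X\setminus Z$ by nested subcomplexes $Y_0\subseteq Y_1\subseteq\cdots$ and building compatible cellular approximations of $(\bar g_1,\bar g_2)$ along the exhaustion. The obstacle you do flag---the scaling construction producing an honest $r$-perturbation---is by comparison the routine $(C)\subseteq(A)$ step of Lemma~\ref{l:perturb-ext}.

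Two further issues. At the borderline $m=2n-3$, $[X,S^{n-1}]$ is not a group and Freudenthal gives only a surjection; the paper does not repair this by obstruction theory as you suggest, but by replacing $(X,A)$ with $(X',A')=(\overline{X\setminus A},\partial A)$, so that $\dim A'\le 2n-4$ restores the group structure on $[A',S^{n-1}]$ while excision identifies $[X/A,S^n]\cong[X'/A',S^n]$. And your one-sentence treatment of $Z_{\le r}(f)$ understates the difficulty substantially: zero sets may now meet $B$, the straight-line homotopy from $g|_A$ to $f|_A$ can pass through $0$ on $B$, and the paper's proof (appendix) requires a mapping-cylinder model $M=X\times\{0\}\cup A\times[0,1]$ together with an inductive limiting construction of perturbations $\beta_j g_j$ to manufacture a genuine non-strict $r$-perturbation with the prescribed zero set.
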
}

The map $X/A\to S^n$ defined in part (2) will be denoted by $f_{/A}$ further on. 
For the set of all homotopy classes of maps from $X$ to $Y$ we will use the standard notation $[X,Y]$.
Part (2) strengthens the part (1), because the homotopy class of $f_{/A}$ is always determined by the homotopy class of $\bar f$
but not vice versa. In the dimension range $m:=\dim X\leq 2n-4$ the sets $[X,S^{n-1}]$, $[A,S^{n-1}]$ and $[X/A, S^n]$ 
possess an Abelian group structure and are called \emph{cohomotopy groups}. Then there is a sequence of homomorphisms 
\begin{equation}
\label{e:les1}
[X,S^{n-1}]\stackrel{i^*} {\longrightarrow} [A,S^{n-1}]\stackrel{\delta}{\longrightarrow}[X/A,S^n]
\end{equation}
where $i^*$ is induced by restriction and $\delta$ maps $[\bar f]$ to $[f_{/A}]$. 
Moreover, the sequence is exact, that is, $\ker \delta=\mathrm{Im}(i^*)$. So $[f_{/A}]$ only determines a coset
$[\bar f]+\mathrm{Im}(i^*)$ in $[A,S^{n-1}]$. The case $m=2n-3$ is more subtle but still $[f_{/A}]=\delta [\bar f]$ and it determines
$Z_{<r}(f)$ completely.
The bound $m\leq 2n-3$ from Theorem~\ref{t:hom-class} (2) is sharp.\footnote{Part (2) of the theorem may fail for $m=2n-2$. 
Let $n=6$ and $m=10$, $X$ be a unit ball in $\R^{10}$, $A=S^9$ and $f\: X\to \R^6$ be defined by 
$f(x)=|x|\eta(x/|x|)$ where $\eta\in [S^9, S^5]$ is a nontrivial element. 
Each $1$-perturbation of $f$ has a root in $B^{10}$ but this information is lost in $[X/A,S^{6}]\simeq [S^{10}, S^6]\simeq \{0\}$.}

\heading{Persistence of robust properties of zero sets.} 
We would like to understand the families $Z_{< r}(f)$ 
not only for one particular $r$ but for all robustness radia $r>0$ simultaneously.
%The rough answer is simple: the homotopy class of $\bar f$ or $f_{/A}$ for a robustness radius $r>0$ determines the corresponding homotopy classes for any robustness radius $s>r$. However, there is some structure in it.
The proper tool to describe it is the concept of \emph{persistence modules}.
%---a favourite tool in computational topology with many applications, e.g. in topological data analysis or shape analysis, to name just a few. We need a pointed version of persistence modules, a version built on top of category of pointed Abelian groups or vector spaces.

We define a \emph{pointed Abelian group} to be a pair $(\gr,a)$ where $\gr$ is an Abelian group and $a\in \pi$ is its distinguished element.
A homomorphism of pointed groups $(\gr,a)\to (\gr',a')$ is a homomorphism $\gr\to \gr'$ that maps $a$ to $a'$. Under this definition, 
pointed Abelian groups naturally form a category. 
We define a \emph{pointed persistence module} to be a functor from $\R^+$ (considered as a poset category) to the category of pointed 
Abelian groups, explicitly $((\pi_r, a_r)_r, (\varphi_{s,r})_{0<r\leq s})$ where $\varphi_{s,r}: (\pi_r, a_r)\to (\pi_s, a_s)$ is
a homomorphism of pointed Abelian groups and $\varphi_{t,s}\varphi_{s,r}=\varphi_{t,r}$ for any $0<r\leq s\leq t$.
We define the \emph{interleaving distance} between two pointed persistence modules $\Pi$ and $\Pi'$ in the usual way 
as the infimum over all $\delta$ such that there exist families
of morphisms $u_r: (\gr_r, a_r)\to (\gr_{r+\delta}', a_{r+\delta}')$ and 
$v_r: (\gr_r', a_r')\to (\gr_{r+\delta}, a_{r+\delta})$ such that 
$v_{r+\delta} u_r=\varphi_{r+2\delta, r}$ and $u_{r+\delta} v_r=\varphi_{r+2\delta, r}'$ for all $r>0$~\cite{Chazal:2009,Chazal:2012}.

%The technique of visualizing structural properties of persistence modules easily generalizes to the case of pointed persistence
%modules: in the visualization, we will have one distinguished barcode that corresponds to the one-dimensional submodule 
%generated by the distinguished elements $m_r$.

We use the pointed cohomotopy groups naturally coming from Theorem~\ref{t:hom-class} (2) as there is less redundant information than in part (1)
and the condition $\dim X\leq 2n-3$ will be needed for our computability results anyway.
For $r\leq s$, let $A_r$ and $A_s$ be $\{x: |f(x)|\geq r\}$ and $\{x: |f(x)|\geq s\}$, respectively. 
%We define $M_r$ to be a subset of $\pi^n(X/A_r)$ consisting of all homotopy classes that can be expressed by $[g_{/A_r}]$ for some
%$g: X\to S^{n-1}$. For completeness, we let $M_r$ be trivial for $r\leq 0$.
%Assuming that $X$ is a compact cell complex, it contains only a finite number of cells and hence $M_r$ are all finitely generated $\Z$-modules.
%The quotient map $X/A_s\to X/A_r$ induces a homomorphism of Abelian groups $\pi^n(X/A_r)\to\pi^n(X/A_s)$
%which maps the homotopy class of $f/A_r$ to the homotopy class of $f/A_s$.
We define a subgroup $\gr_r$ of $[X/A_r,S^n]$ by 
\begin{equation}
\label{e:pi_r}
\gr_r:=\big\{[g_{/A_r}]\in [X/A_r, S^n]\,\,|\,\, g: (X,A_r) \to (\R^n, \{x:\, |x|\geq r\})\big\},
\end{equation}
that is, in the language of the sequence of homomorphisms (\ref{e:les1}), $\gr_r=\im \delta$.
 The quotient map $X/A_s\to X/A_r$ induces a natural map
$\varphi_{s,r}: \gr_r\to \gr_s$ that takes $a_r:=[f_{/A_r}]$ to $a_s:=[f_{/A_s}]$. 
Each quotient map $X/A_t\to X/A_r$ factorizes into quotient maps through $X/A_s$ for every $r\le s\le t$ 
and thus the homomorphisms $\varphi_{s,r}$ behave as required. Therefore the collections $\big(\gr_r,[f_{/A_r}]\big)_{r>0}$ and 
\(\big(\varphi_{s,r}\big)_{s\ge r\ge 0}\) form a pointed persistence module that we will denote by $\Pi_f$ and referred to as 
\emph{cohomotopy persistence module}. 

A simple observation is that the assignment $f\mapsto \Pi_f$ is stable with respect to the interleaving distance $d_I$: more precisely,
it satisfies $d_I(\Pi_f, \Pi_g)\leq \|f-g\|$.
It even holds that the interleaving distance is bounded by the so-called \emph{natural pseudo-distance} $d_N(f,g)$ between $f$ and $g$, that is, the infimum of $\|f-gh\|$ over all self-homeomorphisms $h\:X\to X$ (compare \cite{Cerri:2013a}).
%\footnote{Additionally, $d_I(\Pi_f,\Pi_g)$ is bounded by the infimum of $d_N(f,rg)$ over all rotations $r$ of $\R^n$.} 

If $\F$ is a field, then $\Pi_f\otimes \F$ is a pointed persistence module consisting of pointed vector spaces that are 
pointwise finite-dimensional. The distinguished elements $([f_{/A_r}]\otimes 1)_r$ generate a direct summand and the canonical decomposition
of $\Pi_f\otimes \F$ into interval submodules~\cite{Boevey:2012,Carlsson:2005} yields a 
\emph{pointed barcode}: this is a multiset of intervals with at most one distinguished interval. The distinguished interval 
corresponds to the distinguished direct summand whenever it is nontrivial.
The usual notion of \emph{bottleneck distance} easily generalizes to pointed barcodes: it also holds that the bottleneck distance between
$\Pi_f\otimes \F$ and $\Pi_g\otimes \F$ is bounded by $\|f-g\|$. Formal definitions and 
proofs containing justifications of these remarks are included in Section~\ref{s:modules}.

%In the following theorem, the assumptions on computer representation of the input are identical to the simple setting introduced in \cite{nondec}.
{\def\thetheorem{B}
\addtocounter{theorem}{-1} 
\begin{theorem}[Computability]
\label{t:computability}
Let $X$ be an $m$-dimensional simplicial complex, $f\: X\to\R^n$ be simplexwise linear with rational values on the vertices and $m\leq 2n-3$. For each $r>0$ let $A_r:=\{x\:|f(x)|\ge r\}$ where $|\cdot|$ denotes $\ell_1, \ell_2$ or $\ell_\infty$ norm.  

\begin{itemize}
\item[(1)]
The isomorphism type of the cohomotopy persistence module 
$$\Pi_f=\left(\big(\gr_r, [f_{/A_r}]\big)_{r>0},\,\big(\varphi_{s,r}\big)_{s\ge r\ge 0}\right)$$ can be computed. If $n$ is fixed,
the running time is polynomial with respect to the size of the input data representing $f\: X\to\R^n$.

\item[(2)]
If $\F$ is $\Q$ or a finite field and $n$ is fixed, then the pointed persistence barcode associated with $\Pi_f\otimes \F$ can be computed in polynomial time.
\end{itemize}
\end{theorem}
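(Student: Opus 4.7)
The plan is to reduce the computation to finitely many critical radii and then invoke the known algorithms for cohomotopy groups. First I would identify the finite set of \emph{critical values} $0 < r_1 < r_2 < \cdots < r_N$ at which the combinatorial type of $A_r = \{x : |f(x)| \ge r\}$ changes. Since $f$ is simplexwise linear with rational vertex values, the function $|f|$ restricted to each simplex is piecewise linear (for $\ell_1$ or $\ell_\infty$) or has a piecewise quadratic square (for $\ell_2$); in either case the combinatorial type of the sub-level structure is constant on each open interval $(r_i, r_{i+1})$, and the critical values are explicitly computable from $f$. Between two consecutive values, the inclusion $A_s \hookrightarrow A_r$ is a deformation retract, so the persistence module $\Pi_f$ is constant on each interval and is determined by the finite sequence of pointed groups at critical radii together with the connecting maps.

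The next step is to produce, for each critical radius $r_i$, an explicit simplicial model of $(X, A_{r_i})$. By subdividing $X$ compatibly with the level set $\{|f| = r_i\}$ I can ensure that $A_{r_i}$ is a subcomplex and that $X/A_{r_i}$ is a finite simplicial complex of dimension at most $m \le 2n-3$. The cohomotopy group $[X/A_{r_i}, S^n]$ is then effectively computable, with complexity polynomial for fixed $n$, by the algorithms of \cite{polypost} and the subsequent work in that line; the distinguished element $[f_{/A_{r_i}}]$ is represented by an explicit simplicial map into $S^n \simeq \R^n/\{|x| \ge r_i\}$, so its class is pinpointed in the output. The subgroup $\pi_{r_i} = \im \delta$ of $[X/A_{r_i},S^n]$ is obtained by enumerating generators of $[A_{r_i}, S^{n-1}]$ and applying the connecting homomorphism $\delta$, which lies in the same algorithmic framework.

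For each pair $r_i \le r_{i+1}$ the quotient $X/A_{r_{i+1}} \to X/A_{r_i}$ becomes simplicial after a common refinement, and its induced homomorphism $\varphi_{r_{i+1}, r_i}$ on cohomotopy groups is computable by the standard pullback-of-simplicial-maps construction, again in polynomial time for fixed $n$. Assembling these pieces yields the complete finite diagram of pointed Abelian groups that determines the isomorphism type of $\Pi_f$, establishing (1). For (2), tensoring with $\F$ produces a standard finite persistence module of finite-dimensional $\F$-vector spaces; the usual barcode decomposition algorithm then produces the multiset of intervals, and the distinguished interval is identified by tracking the image of $[f_{/A_{r_i}}] \otimes 1$ through the decomposition into interval summands.

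The main obstacle is the polynomial-time guarantee. The cohomotopy computations and the image of $\delta$ are handled by \cite{polypost}-style algorithms whose running time is polynomial when $n$ is fixed, but one must ensure that the necessary subdivisions and the number of critical radii themselves stay polynomial in the input. For $\ell_1$ and $\ell_\infty$ the level sets cut each simplex by a hyperplane arrangement of constant (in $n$) complexity, which is straightforward; the $\ell_2$ case requires treating quadric level sets, and here one must argue that, with $n$ fixed, the number of faces produced inside each simplex remains polynomially bounded and that algebraic numbers arising from quadrics can be handled symbolically without blowing up. Once these combinatorial preliminaries are in place, both parts of the theorem follow by composing standard cohomotopy computation with standard persistence-barcode extraction.
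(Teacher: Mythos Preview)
Your high-level outline---finitely many critical radii, simplicial models for the pairs $(X,A_r)$, invocation of the cohomotopy algorithms of \cite{polypost}/\cite{Krcal-thesis}, then standard barcode extraction---matches the paper. However, two of your technical choices diverge from the paper in ways that cause real trouble.

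\textbf{Computing $\pi_r=\im\delta$.} You propose to ``enumerate generators of $[A_{r_i},S^{n-1}]$ and apply $\delta$.'' This is problematic at the borderline dimension $m=2n-3$: the sphere $S^{n-1}$ is only $(n-2)$-connected, so the cohomotopy algorithm (Proposition~\ref{p:compute2}) requires $\dim A\le 2(n-2)=2n-4$. When $\dim A=m=2n-3$, $[A,S^{n-1}]$ need not even carry an Abelian group structure, and the algorithm does not apply. The paper avoids this entirely by using exactness of the segment $[A,S^{n-1}]\stackrel{\delta}{\to}[X/A,S^n]\stackrel{j^*}{\to}[X,S^n]$ and computing $\pi_r$ as $\ker j^*$ instead of $\im\delta$. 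Both $[X/A,S^n]$ and $[X,S^n]$ are safely within range since $S^n$ is $(n-1)$-connected and $m\le 2n-3\le 2(n-1)$.

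\textbf{Subdivision and the $\ell_2$ issue.} You propose to subdivide $X$ along each level set $\{|f|=r_i\}$ so that $A_{r_i}$ becomes a subcomplex, and you correctly flag the $\ell_2$ case (curved level sets, algebraic numbers) as an obstacle you have not resolved. The paper bypasses this: it constructs a \emph{single} subdivision $X'$ by starring each simplex $\Delta$ at $\argmin_{x\in\Delta}|f(x)|$ (computable for $\ell_1,\ell_\infty$ by LP and for $\ell_2$ by Lagrange multipliers), plus a few edge starrings at the zeros of the coordinate functions $f_i$. The resulting $X'$ has the property that the minimum of $|f|$ on every simplex is attained at a vertex; Lemma~\ref{l:discret} then shows that for \emph{every} $r$, the subcomplex $A'_r\subseteq X'$ spanned by the vertices with $|f|\ge r$ is a strong deformation retract of $A_r$. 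One subdivision, polynomially many starrings, serves all radii at once, and no curved level sets are ever triangulated. The critical values are simply the numbers $\min_{x\in\Delta}|f(x)|$ over all $\Delta\in X$.

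Apart from these two points your plan is sound and would go through for $m\le 2n-4$ with $\ell_1$ or $\ell_\infty$; the paper's choices are what make the argument uniform in the full stated range and across all three norms.
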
}
Remarks on the theorem follow:
\begin{itemize} 
\item In the setting of the theorem, $\varphi_{s,r}$ is an isomorphism whenever $[r,s)$ contains none of so-called 
\emph{critical values of $f$}. There are only finitely many critical values and the isomorphism type of the persistence module is 
determined by a tuple of critical values \(s_1,\ldots,s_k\) of $f$, a sequence
$$
\gr_{r_0} \stackrel{\varphi_{r_1, r_0}}{\longrightarrow}\cdots
\stackrel{\varphi_{r_k,r_{k-1}}}{\longrightarrow}
\gr_{r_k}
$$
for $r_0<s_1<r_1<\ldots <s_k<r_k$ and the ``initial'' homotopy class of $f_{/A}$ in $\pi_{r_0}$.
\item Under the assumptions of the theorem, for any simplicial subcomplex $Y$ of $X$ and $r>0$, the problem $Y\in Z_{<r}(f)$ 
is decidable.\footnote{This 
amounts to the extendability of $\bar g: A_r\to S^{n-1}$ to the closure of the complement of certain regular neighborhood of $Y$ 
whenever $Y\cap A_r=\emptyset$ and $[\bar g]\in\delta^{-1}[f_{/A_r}]$ is arbitrary.}
In the special but important case of $Y=\emptyset$, it is equivalent to the triviality of $[f_{/A_r}]$. 
Thus the ``robustness of the existence of zero'' equals to the minimal $s_r$ such that
$[f_{/A_r}]=0$, equivalently, the length of the distinguished bar in a~suitable barcode representation. 
\item If $h: X\to X$ is a homeomorphism and $r$ a rotation of $\R^n$, then $\Pi_f$ and $\Pi_{r\circ f\circ h}$ are isomorphic. 
From this viewpoint, the computable bottleneck distance between two barcode representations of $\Pi_f\otimes F$ and $\Pi_g\otimes F$
only measures ``essential'' differences between robust properties of $f^{-1}(0)$ and $g^{-1}(0)$.

\item If $\dim X>2n-3$, then we may still define a persistence structure via part (1) of 
Theorem~\ref{t:hom-class} using $([A_r, S^{n-1}], [\bar f])$ instead of $(\pi_r,[f_{/A}])$.
In some particular dimensions (such as $n=1,2,4$, see below) this structure can be computed. 
The interleaving distance can be defined in the usual way and $d_I(\Pi_f, \Pi_g)\leq \|f-g\|$
still holds. 
\item We also remark that homotopy of two given maps can be algorithmically tested in all dimensions~\cite{Filakovsky:2013}
which can be used to verify the equality $Z_{<r}(f)=Z_{<r}(g)$ in some cases.
\end{itemize}

\heading{Low dimensional cases.} 
If $m=\dim X<n$, then $Z_{<r}(f)$ contains $\emptyset$ and consequently all closed subsets of 
$X$ contained in $X\setminus A$, so there is not much to compute.
The condition $n\leq m\leq 2n-3$ is never satisfied for $n\leq 2$ but in these cases the element $[\bar f]\in [A,S^{n-1}]$ is computable
and we may use part (1) of Theorem A.

The case $n=1$ describes scalar valued functions. Then the homotopy class $\bar f: A_r\to S^0$ consists of a set of pairs 
$(A_r^j, s_r^j)$ where $A_r^1,\ldots, A_r^{n(r)}$ are the connected components of $A_r$ and $s_r^j$ is the sign of $f$ on $A_r^j$. 
If $r<s$, each $A_s^k$ is a subset of a unique $A_r^j$ and the sign is inherited. 
The structure of these components and signs can clearly be computed from the input such as in Theorem~\ref{t:computability}.
%and can be even visualized as an acyclic directed graph in two colors, for example.

The case $n=2$ is also easy to handle. If $A_r$ is a simplicial complex of any dimension, $[A_r,S^1]$ is an Abelian group
naturally isomorphic to the cohomology group $H^1(A_r, \Z)$ \cite[II, Thm 7.1]{HuBook} which can easily be computed 
by standard methods~\cite{Edelsbrunner:2010}.
The inclusion $A_s\hookrightarrow A_r$ induces a homomorphism $[A_r, S^1]\to [A_s, S^1]$ and the whole persistence module 
consisting of these groups and homomorphisms is computable.

For $n=m=3$, the condition $m\leq 2n-3$ is satisfied.
%\footnote{Moreover, \([Y, S^n]\cong H^n(Y,\Z)\) for any $n$-dimensional cell complex $Y$.} 
However, if the input is a $4$-dimensional finite simplicial complex $X$ and 
a simplexwise linear map $f: X\to\R^3$, then we may only hope for partial and incomplete algorithmic results, because
$\emptyset\in Z_{<r}(f)$ is then an undecidable problem by~\cite{nondec}.

Surprisingly, $n=4$ is a special case because $[Y, S^3]$ is an Abelian group for any simplicial complex $Y$: the group operation
can be derived from the quaternionic multiplication in the unit sphere $S^3$. The cases $m=\dim Y\leq 5$ are covered in our theorems above and the computability of $[Y, S^3]$ for higher-dimensional $Y$ is a work in progress.

\heading{Additional information contained in $[f_{/A}]$.}
\begin{figure}
  \begin{center}
    \includegraphics[scale=1.1]{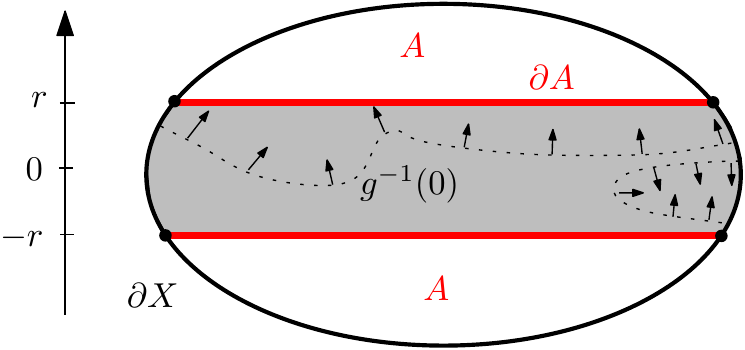}
\caption{Smooth zero sets with a framing. The map $f: X\to\R$ is a~projection to the vertical
dimension and the zero set of each regular $r$-perturbation $g$ of $f$ is a smooth submanifold of $X$ disjoint from $A=\{x: |f(x)|\geq r\}$.  
The vector field represents the gradient of $g$ on its zero set.}
\label{fig:intro_smooth}
  \end{center}
\end{figure} 
Theorem~\ref{t:hom-class} cannot be fully reversed. If $Z_{< r}(f)$ is given, then  $A={X\setminus \cup Z_{< r}(f)}$ can be reconstructed, 
but the corresponding element in $\pi_r\subseteq [X/A,S^{n}]$ is not uniquely determined.\footnote{If $f$ is the identity on a unit $n$-ball, 
we have $Z_{< r}(f)=Z_{< r}(-f)$ for each $r\in (0,1]$ but if $n$ is odd, then $[f_{/A}]\neq [(-f)_{/A}]$.}
%then the composition $qf: X/A\to S^{n}$ is not equal to is the degree 1 map $S^n\to S^n$, that is, represents one of two generators in $[X/A, S^n]\cong [S^n,S^n]\cong\Z$ but  the composition of 
%$q(-f): X/A\to S^{n}$ is its inverse.
In general, there is a~many-to-one correspondence between $\pi_r$ 
and the collection $\{Z_{< r}(f)\mid f\:X\to \R^n\text{ with } \{x: |f(x)|\geq r\}=A\}$:
the distinguished elements in $\pi_r$ still carry more information than is needed to determine $Z_{< r}(f)$.
A natural question is, how to understand this additional information and its geometric meaning? 

We will show that we can achieve a one-to-one correspondence between homotopy classes and zero sets
if we enrich the family of zero sets with an additional structure that carries a {directional information} associated to the zero sets. 
For any $x\in f^{-1}(0)$, this structure contains \emph{gradients} of the components of $f$ in $x$, 
see Fig.~\ref{fig:intro_smooth} for an illustration.

To formalize this, assume that $X$ is a smooth compact $m$-manifold, $f$ is smooth and $0$ is a \emph{regular value} of $f$:
that is, the differential $df(x)$ has (maximal) rank $n$ for each $x\in f^{-1}(0)$.
This implies that $f^{-1}(0)$ is an $m-n$ dimensional submanifold of $X$. 
Assume further that $0$ is also a regular value of $f|_{\partial X}$.
%Then $f^{-1}(0)$ intersects the boundary transversally and $f^{-1}(0)\cap\partial X$ is a submanifold of $\partial X$ of dimension $m-n-1$. 
We will call such functions $f$ \emph{regular}: these properties are by no means special but rather
generic by Sard's theorem~\cite{Milnor:97}.
A regular function $g: X\to\R^n$ such that $\|g-f\|<r$ will be called a \emph{regular $r$-perturbation} of $f$.
Now we are ready to define the enriched version of the family of zero sets
\begin{equation*}
\label{def:Z_r^fr}
\ZrFr(f):=\big\{\big(g^{-1}(0), dg|_{g^{-1}(0)}\big):\,g\,\text{  is a~regular $r$-perturbation of $f$}\big\}.
\end{equation*}
Each element of $\ZrFr(f)$ carries the information about the zero set of some $g$ and the differential $dg$ at this zero set.
The submanifold $g^{-1}(0)$ together with $dg|_{g^{-1}(0)}$ is called a \emph{framed submanifold} and can be geometrically
represented via $n$ gradient vector fields on $g^{-1}(0)$ such as in Figure~\ref{fig:intro_smooth}.

Two framed $k$-submanifolds $N_1$ and $N_2$ of $X$ are \emph{framed cobordant},
if there exists a framed $(k+1)$-dimensional submanifold $C$ of $X\times [0,1]$ such that 
$C\cap (X\times \{0\})=N_1\times \{0\}$, $C\cap (X\times \{1\})=N_2\times \{1\}$ and the framing of $C$ in $X\times \{0,1\}$ 
is mapped to the framing of $N_1, N_2$ via the canonical projection $X\times [0,1]\to X$.
The manifold $C$ is called a \emph{framed cobordism}:  see Fig.~\ref{fig:cob} for an illustration 
and Section~\ref{a:cobordism2perturbation} for a~precise definition in case when $X$ is a manifold with boundary. 
\begin{figure}
  \begin{center}
    \includegraphics[scale=1.1]{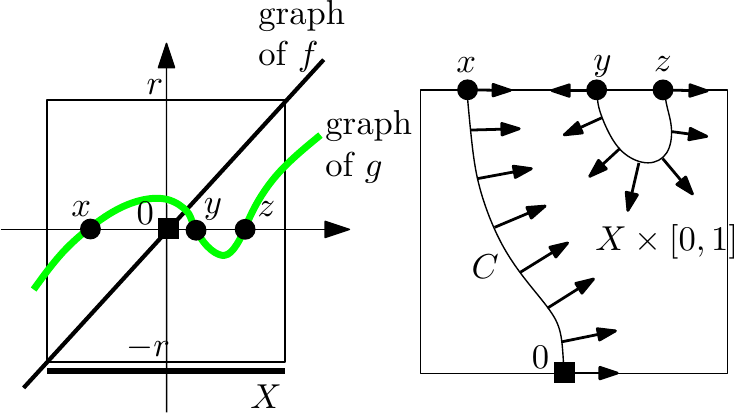}
\caption{The framed zero set of a perturbation $g$ of $f$ consists of three points $x,y$ and $z$ and a framing indicating the directions
in which $g$ is increasing. It is framed cobordant to the framed zero set $(0,\to)$ of $f$.}
%    \label{fig:databaseUserTable}
\label{fig:cob}
  \end{center}
\end{figure} 
{\def\thetheorem{C}
\addtocounter{theorem}{-1} 
\begin{theorem} 
\label{t:cobordism2perturbation}
Assume that $X$ is a smooth compact $m$-manifold, $r>0$, $A\subseteq X$ is closed,
$m\leq 2n-3$, and $\pi_r$ be the subgroup of $[X/A, S^n]$ defined by (\ref{e:pi_r}).  Then there is a bijection
$$
\big\{\ZrFr(f)\mid f: X\to\R^n\text{ such that }A=\{x: |f(x)|\geq r\}\big\}\longleftrightarrow \gr_r
$$ 
satisfying that each $\ZrFr(f)$ is mapped to $[f_{/A}]$.
Moreover, each $\ZrFr(f)$ is a~\emph{framed cobordism class} of framed $(m-n)$-submanifolds disjoint from $A$.
Here the framed cobordisms are also required to be disjoint from $A\times [0,1]$.
\end{theorem}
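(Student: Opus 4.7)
The strategy is to interpret both sides through the \emph{Pontryagin--Thom construction} for the pair $(X,A)$: this gives a natural bijection
$$
\mathrm{PT}\:[X/A,S^n]_*\longleftrightarrow \{\text{framed cobordism classes of compact }(m-n)\text{-submanifolds of } X\setminus A\},
$$
where a pointed smooth representative $\phi\:X/A\to S^n$ with $0\in S^n$ a regular value maps to $(\phi^{-1}(0),d\phi|_{\phi^{-1}(0)})$, framed via $T_0 S^n\cong\R^n$. I aim to prove $\ZrFr(f)=\mathrm{PT}^{-1}[f_{/A}]$; the bijection in the theorem then follows by restricting $\mathrm{PT}$ to $\pi_r$ and noting that every class in $\pi_r$ is $[f_{/A}]$ for some admissible $f$ (rescale any pair representative on $X\setminus A$ so as to enforce $\{|f|\geq r\}=A$ without changing the class).

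To establish $\ZrFr(f)\subseteq \mathrm{PT}^{-1}[f_{/A}]$ and that $\ZrFr(f)$ lies in a single cobordism class: for two regular $r$-perturbations $g_0,g_1$ of $f$, the straight-line homotopy $h_t=(1-t)g_0+tg_1$ is itself an $r$-perturbation by convexity of the norm, hence nonzero on $A$. Sard's theorem yields a transverse perturbation of $h$ rel endpoints whose zero set is a framed cobordism in $(X\setminus A)\times[0,1]$ between $(g_0^{-1}(0),dg_0)$ and $(g_1^{-1}(0),dg_1)$. To locate this class inside $\mathrm{PT}^{-1}[f_{/A}]$, given any regular $r$-perturbation $g$: the straight-line homotopy $g|_A\simeq f|_A$ stays in $\R^n\setminus\{0\}$ since $\|g-f\|<r\leq|f|$ on $A$, and homotopy extension produces $g\simeq g''$ with $g''|_A=f|_A$, yielding (after Sard) a framed cobordism between $(g^{-1}(0),dg)$ and $((g'')^{-1}(0),dg'')$. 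The map $g''_{/A}$ is then well defined, and the straight-line homotopy from $g''$ to $f$ runs through pair maps $(X,A)\to(\R^n,\{|x|\geq r\})$, giving $[g''_{/A}]=[f_{/A}]$.

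The main obstacle is the reverse inclusion $\mathrm{PT}^{-1}[f_{/A}]\subseteq \ZrFr(f)$: given $(N,\nu)$ in the correct PT-class, one must realize it on the nose as $(g^{-1}(0),dg|_N)$ for some regular $r$-perturbation $g$ of $f$. I would set $g$ equal to the normal projection on a tubular neighborhood $T$ of $N$ trivialized by $\nu$, equal to $f$ outside a slightly larger neighborhood of $T$, and glue across the annular transition. This gluing must avoid new zeros while maintaining $\|g-f\|<r$, which reduces to a relative extension problem whose obstructions lie in cohomotopy groups; the stable-range hypothesis $m\leq 2n-3$ forces these obstructions to vanish precisely when $(N,\nu)$ represents $[f_{/A}]$, completing the realization and hence the bijection.
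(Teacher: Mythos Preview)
Your forward direction is essentially the paper's: straight-line homotopies stay nonzero on $A$, and after smoothing/transversality this places every regular $r$-perturbation's framed zero set into the Pontryagin--Thom class of $[f_{/A}]$. That part is fine.

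The reverse direction, however, has a genuine gap. You propose to set $g$ equal to the tubular projection near $N$ and \emph{equal to $f$ outside a slightly larger neighborhood}. But $f$ itself has zeros, and there is no reason those zeros lie near $N$; with your construction, $g^{-1}(0)$ would contain $f^{-1}(0)$ minus a neighborhood of $N$, not just $N$. So the map you write down does not realize $(N,\nu)$ as its framed zero set. Even interpreting you charitably---replacing ``equal to $f$'' by ``equal to some nowhere-zero map agreeing with $\bar f$ on $A$''---the resulting obstruction problem lives on $X\setminus\mathrm{int}(T)$ relative to $A\cup\partial T$, and the relevant groups $H^k(X\setminus\mathrm{int}(T),A\cup\partial T;\pi_{k-1}(S^{n-1}))$ are \emph{not} forced to vanish by $m\le 2n-3$ alone. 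Your assertion that they ``vanish precisely when $(N,\nu)$ represents $[f_{/A}]$'' is the whole content of the theorem and is left unproved.

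The paper avoids this by a different maneuver. First it replaces $r$-perturbations by \emph{homotopy perturbations} (the smooth analogue of Lemma~\ref{l:perturb-ext}), so the target condition becomes $h|_A\sim f|_A$ rather than $\|h-f\|<r$. Then, crucially, it does \emph{not} try to build $g$ on $X$ directly. Instead it takes an already-known regular perturbation $g_0$ (which exists), takes the framed cobordism $C\subseteq X\times[0,1]$ between $g_0^{-1}(0)$ and $N$ (which exists exactly because the PT classes match), and extends a map defined on a product neighborhood $\mathcal N$ of $C$ together with $g_0$ on $X\times\{0\}$ across the rest of $X\times[0,1]$. The obstruction groups then reduce, via excision and the long exact sequence of a triple, to $H^{k-1}(C,C_0;\pi_{k-1}(S^{n-1}))$. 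Since $\dim C=m-n+1$ and $m\le 2n-3$ gives $m-n+1<n-1$, these groups vanish \emph{unconditionally}: the PT-class hypothesis has already been spent in producing $C$, so no further matching condition appears in the obstruction step. Restricting the resulting $F$ to $X\times\{1\}$ gives the desired homotopy perturbation with framed zero set exactly $(N,\nu)$.
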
}
If $A$ is given, then any framed zero set determines its framed cobordism class $\ZrFr(f)$ and hence
$[f_{/A}]$. It follows that $[f_{/A}]$ is a property common to all elements of $\ZrFr(f)$, that is, an invariant of $\ZrFr(f)$.
This invariant is complete, as it determines all of $\ZrFr(f)$.

In its special case, Theorem~\ref{t:cobordism2perturbation} claims that 
whenever $f: X\to\R^n$ is such that $[f_{/A}]=0$, then $\ZrFr(f)$ consists exactly of all framed $(m-n)$-submanifolds that are 
framed null-cobordant in $X\setminus A$. This particular claim can also be derived from~\cite[Theorem 3.1]{Koschorke}.

If $m\leq 2n-3$ is violated, then the framed zero sets of regular perturbations are still framed cobordant but $\ZrFr(f)$ 
is only a~subset of the full framed cobordism class. It is an interesting question for further research to find
the additional invariants of framed zero sets in these cases.
%\heading{An example.} Let $X:=B^4\times S^2$ be the product of a~4-ball and 2-sphere and define $f: X\to\R^3$
%$$
%f(x,y)=
%\begin{cases}
%|x|\,\eta(x/|x|)\quad \text{if} \,\,\,x\neq 0 \\
%0\quad \text{if} \,\,\, x=0
%\end{cases}
%$$ 
%where $\eta: S^3\to S^2$ is the Hopf map and $S^2$ is embedded as a unit sphere in $\R^3$. 
%Note that $f$ is continuous and its zero set is the $2$-sphere $\{0\}\times S^2$. Let $r\in (0,1]$.
%Then $A_r:=\{x: \,\,|f(x)|\geq r\}$ equals $\{x\in B^4:\,|x|\geq r\}\times S^2\simeq S^3\times [r,1]\times S^2$
%

\heading{Related work.}
One of the roots of our research comes from zero verification.
If $X$ is a product of intervals and $f: X\to\R^n$ is defined in terms of interval arithmetic,\footnote{
That is, there is an algorithm that computes a superset of $f(X')$ for any subbox $X'\subseteq X$ with rational vertices.}
then the nonexistence of zeros of $f$ can often be verified by interval arithmetic alone \cite{Neumaier:90}.
However, the proof of existence requires additional ingredients such as Brouwer fixed point theorem \cite{Rump:2010}
%Miranda's and Borsuk's theorem \cite{Frommer:05,Alefeld:01a} 
or topological degree computation \cite{Collins:08b,Franek_Ratschan:2015}.
These techniques are applicable for domains $X$ of dimension $n$ and succeed only if the zero is $r$-robust for some $r>0$. 
%are usually incomplete (see \cite{Wang:74} for an undecidability result) and
Naive applications of these techniques fail in the case of ``underdetermined systems'' $f(x)=0$ 
where the dimension of the domain $X$ of $f$ is larger than $n$. 
In \cite{nondec} we analyzed the problem of existence of an $r$-robust zero of functions $f: X\to\R^n$ where $X$ 
is a simplicial complex of arbitrary dimension.
%This problem is decidable in the dimension range $\dim X\leq 2n-3$ where it is equivalent to
%the triviality of $[f_{/A}]$ (defined in Theorem \ref{t:hom-class}). 
%On the other hand, if $n\geq 3$ is odd and $\dim X=2n-2$, then the existence of
%an $r$-robust zero is undecidable.\footnote{The existence of $r$-robust zero is equivalent to the extendability of $\bar f$
%to a map $X\to S^{n-1}$. Triviality of $[f_{/A}]$ can be tested in all dimensions~\cite{Filakovsky:2013} but if $\dim X\geq 2n-2$, then
%this is no longer equivalent to the extendability of $\bar f$ as the sequence (\ref{e:les1}) may fail to be exact.}

Another parallel line of related research is the field of persistent homology which analyzes 
properties of scalar functions (rather than their zero sets) via persistence modules 
build up from the homologies of their sublevel sets $f^{-1}(-\infty, r]$ for all $r\in \R$. 
Persistent homology has been generalized to the case of $\R^n$-valued functions
\cite{Carlsson:2009_pers,Cerri:2013a,Cerri:2013b,Cagliari:2010}. 

\heading{\it{Well groups.}} 
Well groups associated to $f : X \to Y$ and a subspace $Y' \subseteq Y$ 
describe homological properties of the preimage $f^{-1}(Y')$ which persist if we perturb the input function $f$.
We include a formal definition for the case of $Y=\R^n$ and $Y'=\{0\}$. Let $W$ be the space of potential zeros of all $r$-perturbations, that is, $\{x: |f(x)|\leq r\}$. Then the well groups $U_*(f,r)$ are subgroups of homology groups $H_*(W)$ consisting of classes \emph{supported} by the zero set of each $r$-perturbation $g$ of $f$. Formally, 
$$U_*(f,r):=\bigcap_{Z\in Z_{\le r}(f)} \im\big(H_*(Z\hookrightarrow W)\big)
$$
where $Z\hookrightarrow W$ is the inclusion and $H_*$ is a convenient homology theory.
Most notably, $U_0(f,r)=0$ whenever $f$ has no $r$-robust zero, i.e., \(\emptyset\in Z_{\le r}(f)\) and therefore 
the same undecidability result \cite{nondec} applies to well groups. Obviously, each well group is a property of $Z_{\le r}(f)$ 
and is therefore ``encoded'' in the homotopy class of $\bar f$.
However, the decoding seems to be a difficult problem, see \cite{FK:well-dcg} for some partial results and \cite{interlevel,chazal} for previous algorithms for special cases $n=1$ and  $\dim X=n$.

Well groups for various radia $r$ fit into a certain
zig-zag sequence that yields  so-called \emph{well diagrams}---a multi-scale version of well groups that is provably stable under perturbations of $f$ \cite{well-group}.

Summarizing our opinion, well diagrams  provide very general tool for robust analysis that uses accessible and geometrically intuitive language of homology theory. In addition, they present a challenging computational problem deeply interconnected with homotopy theory. However, their
computability status is worse  than that of cohomotopy groups and they fail to capture some properties of $Z_ {\le r}(f)$ \cite{FK:well-dcg}.

%\heading{\it{Cap image group}}. To overcome the computability issues, Amit Patel suggested a replacement of well groups called \emph{cap image groups}. They are based on a cohomological pullback $\varphi$ of the fundamental cohomology class of the disk $B_r=\{x\:|x|\le r\}$ in \(\R^n\) by the restriction \(f\:(W,\partial W)\to (B_r, \partial B_r)\). The groups are obtained as the image of the homomorphism \(H_*(W)\stackrel{\varphi\frown\cdot}{\longrightarrow} H_{*-n}(W)\) defined by taking cap products with $\varphi$.
%The cap image groups are subgroups of well groups \cite{FK:well-dcg} and they suffer  the same failure in describing
% $Z_{\le r}(f)$ as well groups do. The cohomology class  $\varphi\in H^n(W,\partial W)$  is uniquely determined by the restriction of  $[f_{/A}]\in [X/A, S^n]$ to the $n$-skeleton of $X/A\cong W/\partial W$. Therefore our pointed cohomotopy groups can be thought of as an ultimate refinement of the cap image groups. 
%

\section{Illustrating examples.} 
\heading{Intermediate value theorem.}
In the motivating example from Figure~\ref{fig:fig-intro}, the family $Z_{<r}(f)$ is characterized by the map $A\to S^0=\{+,-\}$ such that
each component of $A$ is mapped to a different element of $S^0$. 
By the intermediate value theorem, any curve connecting the two components of $A$ intersects the zero set of any $r$-perturbation of $f$.  
The set $Z_{<r}(f)$ is determined by the element of $[A,S^0]\simeq [S^0, S^0]$, illustrating Theorem A (1).

There are two non-constant elements of $[A,S^0]$, represented by $f$ and $-f$. 
They give rise to identical sets $Z_{<r}(f)=Z_{<r}(-f)$. 
However, the framed version $\ZrFr(f)$ and $\ZrFr(-f)$ are different, as the gradient information encodes on which side of the zero set
is the function positive, resp. negative.

\heading{Topological degree.}
Consider functions $\R^n\to \R^n$ and $r>0$ such that $W:=\{x:\, |f(x)|< r\}$ is a topological $n$-disc.
In this case, $Z_{< r}(f)$ is determined by the degree of $$\partial W\simeq S^{n-1} \stackrel{f}{\longrightarrow} \R^n\setminus\{0\}\simeq S^{n-1}.$$
If this degree is nonzero, then $f$ is not extendable to all of $W$ and each $r$-perturbation of $f$ has a root in $W$.
It is not hard to show that $Z_{<r}(f)$ then consists of all \emph{non-empty} closed sets contained in $W$. On the other hand,
if the degree is zero, then some $r$-perturbation of $f$ avoids zero and $Z_{<r}(f)$ consists of \emph{all} closed sets contained in $W$.
The degree is clearly determined by the homotopy class of the map $[\bar f]\in [A_r, S^{n-1}]$ where $A_r=\{x:\,\,|f(x)|\geq r\}$.

While $Z_{<r}(f)$ does not distinguish various nonzero degrees, the refined version 
$\ZrFr(f)$ from Theorem C does.\footnote{The degree determines and is determined by the image of $[A_r, S^{n-1}]$ in
$[X/A_r, S^n]\simeq [S^n, S^n]\simeq \Z$.}
If the degree is $k\in\Z$, then $\ZrFr(f)$ consists of all finite framed point sets in $W$ such that the 
difference between positively and negatively oriented points is exactly $k$. 
Thus not only does $\ZrFr(f)$ determine the degree, but so does \emph{each element} of $\ZrFr(f)$. 

\heading{Higher order obstructions.}
The following example, taken from~\cite{FK:well-dcg}, illustrates the strength of Theorem A in a~situation where \emph{well groups} (based 
on homology theory) are not sufficient to describe $Z_{<r}(f)$.
Let $X=S^2\times B^4$ where $S^2$ is the standard unit sphere, and $f: X\to\R^3$ is defined by $f(x,y)=|y|\,\eta(y/|y|)$ 
where $\eta: S^3\to S^2$ is the Hopf fibration.

The Hopf map $\eta$ can not be extended to $B^4\to S^2$, and so each $1$-perturbation of $f$ has a~root in each section $\{x\}\times B^4$.
In particular, the zero set of a~perturbation cannot be discrete.

Consider another map $g: S^2\times B^4\to \R^3$ defined by $g(x,y):=|y| \varphi(x,y/|y|)$ where $\varphi: S^2\times S^3\to S^2$
is defined as the composition $S^2\times S^3\stackrel{\wedge}{\to} (S^2\times S^3)/(S^2\vee S^3)\simeq S^5\stackrel{\nu}{\to} S^2$
where $\nu$ is a~homotopically nontrivial map. In this case, we showed in~\cite{FK:well-dcg} that every $1$-perturbation of $g$ has a~zero
but it may be a~singleton.  
Thus $Z_{<r}(f)\neq Z_{<r}(g)$ and the map $(x,y)\mapsto \eta(y)$ is not homotopic to $(x,y)\to \varphi(x,y)$ 
as maps from $A:=S^2\times S^3$ to $S^2.$
Note that the sphere-valued map $g|_{A}$ is extendable to the $5$-skeleton of $X$, while $f|_A$ is only extendable to the $3$-skeleton of $X$. 

However, $f$ and $g$ give rise to isomorphic well groups which are both zero in all positive dimensions.
While the zero set of $f$ is the two-sphere $S^2\times \{0\}$, there exist arbitrarily small perturbations of $f$ having 
the zero set homeomorphic to $S^3$, killing a~potential nontrivial element of the second homology of the zero sets of perturbations.

Less technically, the information that ``the zero set of each $r$-perturbation of $f$ intersects each section $\{x\}\times B^4$''
is lost in the well group description of $Z_{<r}(f)$.

\heading{Cohomotopy barcode.} 
The example from the previous heading immediately generalizes to $f,g: S^2\times B^{n+1}\to \R^n$ 
defined via nontrivial elements 
$$\eta\in \pi_{n}(S^{n-1})\simeq\Z_2\quad \text{ and }\quad \nu\in \pi_{n+2}(S^{n-1})\simeq \Z_{24}$$
for large enough $n$. It was shown in~\cite{FK:well-dcg} that the well groups and well modules associated to $f,g$ are 
trivial in all positive dimensions, although $Z_{<r}(f)\neq Z_{<r}(g)$ for $r\in (0,1]$.

For each $r\in (0,1]$, the exact cohomotopy sequence 
$$0=[X,S^{n-1}]\to [A_r, S^{n-1}]\stackrel{\delta}{\to} [X/A_r,S^n]\to [X,S^n]=0$$
shows that we have an~isomorphism $$\pi_r=\mathrm{Im}(\delta)=[X/A_r,S^{n-1}]\simeq [A_r,S^{n-1}]\simeq [S^2\times S^{n}, S^{n-1}].$$
and $\varphi_{r,s}: \pi_r\to \pi_s$ is the identity (under the above identification). 
The group $[S^2\times S^{n}, S^{n-1}]$ equals\footnote{It follows from the exact sequence presented in~\cite[Problem 18.31]{Skopenkov}
and the relatively easy facts that the second arrow of this sequence is a~surjection and the sequence splits.}
$\pi_{n}(S^{n-1})\times \pi_{n+2}(S^{n-1})\simeq \Z_2\times \Z_{24}$
and tensoring with the field $\Z_3$ would yield diagrams with one bar only.
This bar would be distinguished in the diagram of $g$ but not so in the diagram of $f$. Thus a~pointed 
cohomotopy barcode can distunguish two functions $f$ and $g$ with equal well groups and well modules.

\section{Proof of Theorem A (strict case)}
\label{s:hom-class}
In this Section we will give a proof of Theorem A for the case of strict-inequalities and refer the non-strict case $Z_{\leq r}(f)$, 
which is more technical, to the Appendix (page~\pageref{s:rest_thm_A}).

The proof utilizes certain properties of compact Hausdorff spaces. 
We say that a pair of spaces $(Y,Z)$ satisfies the \emph{homotopy extension
property with respect to a space $T$} whenever each map $H'\:Y\times\{0\}\cup
Z\times[0,1]\to T$ can be extended to $H\:Y\times[0,1]\to T$.
The map $H'$ as above will be called a \emph{partial
homotopy} of  $H'|_{Y}$ on $Z$. It follows from \cite[Prop. I.9.3]{HuBook}
that, once $K$ is compact Hausdorff and $T$ triangulable, every
pair of closed subsets $(Y,Z)$ of $K$ satisfies the homotopy
extension property with respect to $T$. 

In addition, for every two disjoint closed subsets $V$ and $W$
in a compact Hausdorff space $X$ there is a \emph{separating
function} $\chi\:X\to[0,1]$. That means, there is a function
$\chi\:X\to[0,1]$ that is $0$ on $V$ and $1$ on $W$. It is easily
seen that the values $0$ and $1$ above can be replaced by arbitrary
real values $s<t$.

Finally, every homotopy $H\:Y\times[0,1]\to T$ of the form $H(y,t)=H(y,0)$
will be called \emph{stationary}.

%We first prove the easier version of Theorem~\ref{t:hom-class} with $Z_{<r}(f)$ and leave the case $Z_{\leq r}(f)$ to Appendix~\ref{a:nonstrict}.
%where $\Zf$ is replaced by $Z_r^<(f):=\{g^{-1}(0)\mid g\:X\to
%\R^n$ s.t. $\|g-f\|<r\}$. The simpler proof reveals better the main ideas and, in addition, the key step---i.e., the following lemma---is  required also in the full proof.
%
\begin{lemma}[From perturbations to homotopy perturbations]\label{l:perturb-ext}
Let $f\:X\to \R^n$ be a map on a compact Hausdorff space $X$
and let $A:=\{x: |f(x)|\geq r|\}$. Then the families 
\begin{equation}
\label{e:array_1}
\tag{A}
\{g^{-1}(0)\mid g\text{ is a strict $r$-perturbation of } f\}\text{,}
\end{equation}
\begin{equation}
\label{e:array_2}
\tag{B}
\{h^{-1}(0)\mid
h\:(X,A)\to (\R^n,\R^n\setminus\{0\})\text{, }h|_A\sim f|_A\}
\text{ and}
\end{equation}
\begin{equation}
\label{e:array_3}
\tag{C}
\{e^{-1}(0)\mid e\:X\to\R^n \text{
is an extension
of } f|_A\} 
\end{equation}
are all equal. Moreover, if an extension $e: X\to\R^n$ of $f|_A$ is given, then the
strict $r$-perturbation $g$ of $f$ such that $g^{-1}(0)=e^{-1}(0)$ can be chosen to be a multiple of $e$ by a positive scalar function.
%Moreover, for every extension $e\:X\to\R^n$ of $f|A$ there is a ``corresponding'' strict $r$-perturbation $g$ with $g^{-1}(0)=e^{-1}(0)$ of the form $g=\chi e$ where  $\chi\:X\to\R^+$ is a positive scalar function.
\end{lemma}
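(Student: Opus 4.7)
The plan is to prove the three families coincide via the cyclic chain of inclusions $(A) \subseteq (B) \subseteq (C) \subseteq (A)$. For $(A) \subseteq (B)$ I simply take $h := g$: on $A$ the estimate $|g(x)| \ge |f(x)| - |g(x) - f(x)| > r - r = 0$ shows that $g|_A$ is nonvanishing, and the straight-line homotopy $H_t := (1-t)g + tf$ satisfies $|H_t - f| = (1-t)|g - f| < (1-t)r$, whence $|H_t| \ge |f| - |H_t - f| > 0$ on $A$. This exhibits $g|_A \sim f|_A$ in $\R^n \setminus \{0\}$, and $h^{-1}(0) = g^{-1}(0)$ places the zero set in~(B).

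For $(C) \subseteq (A)$, which simultaneously proves the ``moreover'' clause, I construct a positive scalar multiplier of $e$ explicitly. Because $e - f$ is continuous and vanishes on $A$, the open set $U := \{x : |e(x) - f(x)| < r/2\}$ contains $A$; its complement $C := X \setminus U$ is a closed subset of $X \setminus A$, so $|f| < r$ on $C$ and $r_0 := \max_{x \in C} |f(x)|$ is strictly less than $r$ by compactness. I pick a separating function $\chi : X \to [0,1]$ with $\chi \equiv 1$ on $A$ and $\chi \equiv 0$ on $C$, choose $\delta \in \bigl(0,\, (r - r_0)/(1 + \sup_X |e|)\bigr)$, and set $\lambda := \chi + (1 - \chi)\delta$. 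Then $\lambda$ is continuous with values in $[\delta, 1]$, so $g := \lambda e$ is a positive multiple of $e$ and $g^{-1}(0) = e^{-1}(0)$. The decomposition $g - f = \lambda(e - f) + (\lambda - 1)f$ gives $|g - f| \le \lambda |e - f| + (1 - \lambda)|f|$, yielding $|g - f| = 0$ on $A$, $|g - f| < r - \lambda r/2 < r$ on $U \setminus A$, and $|g - f| \le \delta \sup_X |e| + r_0 < r$ on $C$.

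The part I expect to be the main obstacle is $(B) \subseteq (C)$, since a naive application of the homotopy extension property to extend the given homotopy from $A$ to all of $X$ typically creates spurious zeros. My fix is to localize the deformation to a neighborhood on which $h$ does not vanish. Using normality of $X$, I choose an open $U$ with $A \subseteq U \subseteq \bar U \subseteq \{x : h(x) \neq 0\}$. The pair $(\bar U, A)$ consists of closed subsets of the compact Hausdorff space $X$ and $\R^n \setminus \{0\}$ is triangulable, so the cited homotopy extension property applies with target $\R^n \setminus \{0\}$: the partial homotopy on $(\bar U \times \{0\}) \cup (A \times [0,1])$ given by $h|_{\bar U}$ at time $0$ and by the prescribed homotopy on $A \times [0,1]$ extends to a full $H' : \bar U \times [0,1] \to \R^n \setminus \{0\}$. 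Choosing a separating function $\psi : X \to [0,1]$ with $\psi \equiv 1$ on $A$ and $\psi \equiv 0$ on $X \setminus U$, I define
\[
e(x) := \begin{cases} H'(x, \psi(x)), & x \in \bar U, \\ h(x), & x \in X \setminus U. \end{cases}
\]
The two branches agree on $\bar U \cap (X \setminus U)$ because $\psi$ vanishes there and $H'(\cdot, 0) = h|_{\bar U}$, so $e$ is continuous by the pasting lemma. Since $H'$ avoids $0$, the map $e$ is nonvanishing on $\bar U$; on $X \setminus U$ it equals $h$, whose zero set lies in $X \setminus \bar U$. Hence $e|_A = H'(\cdot, 1) = f|_A$ and $e^{-1}(0) = h^{-1}(0)$, closing the cycle.
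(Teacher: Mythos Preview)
Your proof is correct and follows the same cyclic scheme $(A)\subseteq(B)\subseteq(C)\subseteq(A)$ as the paper, with only cosmetic differences: in $(B)\subseteq(C)$ the paper extends the homotopy over $|h|^{-1}[\epsilon,\infty)$ with a stationary partial homotopy on the level set $|h|^{-1}(\epsilon)$, whereas you taper the time parameter with a Urysohn function on a normal neighborhood $\bar U$ of $A$; in $(C)\subseteq(A)$ both arguments multiply $e$ by a positive separating function, and your estimates are equivalent to the paper's.
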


\begin{proof}We will prove that the sequence of inclusions (A)
$\subseteq$ (B) $\subseteq$ (C) $\subseteq$ (A)  holds. 
The additional relation between $e$ and $g$ will be shown in the construction of $g$ in the (C)$\subseteq $ (A) part.%\begin{itemize}

\heading{(A) is a subset of (B):} Each strict $r$-perturbation $g$ of $f$ is nowhere zero on $A$ and the straight line homotopy
$F(a,t)=t\,g(a)+(1-t) f(a)$ satisfies $F(t,a)\neq 0$ for any $(a,t)\in A\times [0,1]$. Indeed, each line shorter than $r$ starting at a point at least $r$ away from zero has to avoid zero.

%This follows from
%$$
%|F(t,a)|=|t(g(a)-f(a)) + f(a)|\geq \|f\| - t \|g-f\|> r- tr %\geq 0.
%$$

\heading{(B) is a subset of (C):} We start with a map of
pairs $h\:(X,A)\to(\R^n,\R^n\setminus\{0\})$ such that $h|_A$ is homotopic to $f|_A$ and want to construct an extension
$e$ of $f|_A$ such that $e^{-1}(0)=h^{-1}(0)$. To that end, let us choose
a value $\epsilon>0$ such that $\min_{x\in A}|h(x)|\ge 2\epsilon$
and let us define $Y:=|h|^{-1} [\epsilon,\infty)$. The
partial homotopy of $h$ on $|h|^{-1}(\epsilon)\cup A$
that is stationary on $|h|^{-1} (\epsilon)$ and equal
to the given homotopy $h|_A\sim f|_A$ on $A$ can be extended
to $H\:Y\times[0,1]\to\R^n\setminus\{0\}$ by the homotopy extension
property. The homotopy extension property holds because all the
considered maps take values in a triangulable space $\{x\in\R^n\:|x|\in[\epsilon, M]\}$ for some $M\in\R$. 

The desired extension $e$ can be defined to be equal to $h$ on
$|h|^{-1}[0,\epsilon]$ and equal to $H(\cdot,1)$ on $Y$.

\heading{(C) is a subset of (A):} We start with an extension
$e\:X\to\R^n$ of $f|_A$ and want to construct a strict $r$-perturbation
$g$ of $f$ such that $g^{-1}(0)=e^{-1}(0)$. 

The set $U:=\{x\in X\: |e(x)-f(x)|<r/2\}$ is an open neighborhood
of $A$.
Due to the compactness of $X$, there exists $\epsilon\in
(0,r/2)$ such that $|f|^{-1}[r-\epsilon, \infty)\subseteq U$ (otherwise,
there would exist a sequence $x_n\notin U$ with $|f(x_n)| \to
r$ and a convergent subsequence $x_{j_n}\to x_0$, where $x_0\in
A\subseteq U$, contradicting $x_{j_n}\notin U$).

Let $\chi\:X\to[\epsilon/(2 \|e\|),1]$ be a separating function
for $A$ and  $W:=|f|^{-1} [0,r-\epsilon]$, that is, a continuous
function that is $\epsilon/(2\|e\|)$ on $W$ and $1$ on $A$. The
map $g\:X\to\R^n$ defined by 
$$g(x):=\chi(x)e(x)$$
is a strict $r$-perturbation of $f$. Indeed, for $x\in A$, $f(x)=g(x)$ by definition and for $x\in W$, we have $|g(x)-f(x)|\leq
\epsilon/2 +(r-\epsilon)<r$. Otherwise, $x\in U\setminus A$ and
then$$|g(x)-f(x)|\leq \chi(x)\underbrace{|e(x)-f(x)|}_{\leq r/2}+
(1-\chi(x))\underbrace{|f(x)|}_{\leq r}<r.$$  
\end{proof}

\begin{proof}[Proof of Theorem~\ref{t:hom-class}, Part (1).]
This follows directly from the equality between $(A)$ and $(B)$ in Lemma \ref{l:perturb-ext}: clearly the definition of the family 
$(B)$ depends on the homotopy class of $f|_A\:A\to\R^n\setminus\{0\}$ only. 
This homotopy class is uniquely determined by the homotopy class of $\bar f\:A\to S^{n-1}$.
\end{proof}

\heading{Cohomotopy groups.}
For Part (2) of the Theorem, we need the Abelian group structure of $[A, S^{n-1}]$ and $[X, S^{n-1}]$
(see~\cite[Chapter 7]{HuBook} for more details).
Assume first that $m\leq 2n-4$ which will make the proof easier: we will comment on the special case $m=2n-3$ at the end.
If $A\subseteq X$ are simplicial complexes of dimension $\leq 2n-4$, then both $[A, S^{n-1}]$ and $[X, S^{n-1}]$ are Abelian groups
with the group operation defined as follows.
Let $f,f'$ be maps $A\to S^{n-1}$. The image of the cellular approximation $a$ of $(f,f')\:A\to S^{n-1}\times S^{n-1}$ 
misses the top $(2n-2)$-cell, hence $a(A) \subseteq S^{n-1}\vee S^{n-1}$. The sum $f\boxplus f'\:A\to S^{n-1}$ is defined 
as the composition $\nabla a$ where $\nabla\:S^{n-1}\vee S^{n-1}\to S^{n-1}$ is the folding map. 
The homotopy class $[f\boxplus f']$ is independent of the choice of representative $f$ of $[f]$ and $f'$ of $[f']$ 
and is independent of the choice of the cellular approximation $a$ as well. 
It induces a binary operation in $[A, S^{n-1}]$ which is associative and commutative, the element $[\const]$ 
is neutral with respect to this operation and the inverse element to $f$ is obtained by composing $f$ with a map 
$S^{n-1}\to S^{n-1}$ of degree $-1$ that will be denoted by $\boxminus f$. 
The inclusion $i: A\hookrightarrow X$ induces a homomorphism $i^*: [X, S^{n-1}]\to [A, S^{n-1}]$ whose image
is a subgroup of $[A, S^{n-1}]$ that consists of homotopy classes of maps that are extendable
to $X\to S^{n-1}$. In particular, once $f\:A\to S^{n-1}$ is extendable to $X\supseteq A$, then $\boxminus f$ is as well. 
By \cite[Chapter VII]{HuBook}, there is an exact sequence of cohomotopy groups 
\begin{equation}
\label{e:exact}
[X, S^{n-1}]\stackrel{i^*}{\to} [A, S^{n-1}]\stackrel{\delta}{\to} [(X,A), (S^n, *)]\simeq [X/A, S^n]
\end{equation}
where $\delta$ maps the homotopy class $[\bar f]\in [A, S^{n-1}]$ to $[f_{/A}]$ defined in Theorem \ref{t:hom-class}.
The exactness of this sequence\footnote{That is, $\ker\delta=\mathrm{Im}(i^*)$.} implies that the $\delta$-preimage of 
$[f_{/A}]$ is $[\bar f]+\mathrm{Im} (i^*)$. To prove our statement, we need to show that this coset in $[A, S^{n-1}]$
uniquely determines $Z_{<r}(f)$. 
%For this it is sufficient to show that if $E: A\to S^{n-1}$ is extendable to
%$X\to S^{n-1}$, then an element $[\bar f]\in \pi^{n-1}(A)$ determines
%(by means of part (1) of the theorem) the same family of zero sets as $[\bar f\boxplus E]$. 
%
For maps $f_1,f_2\:X\to\R^n$ by $f_1\boxplus f_2$ we will denote an arbitrary extension $X\to\R^n$ of a representative 
$\bar f_1 \boxplus \bar f_2$ of $[\bar f_1]\boxplus [\bar f_2]$. 
By Theorem~\ref{t:hom-class} (1) the family $Z_{< r}(f_1\boxplus f_2)$ is independent of the choices of the representative and of the extension.

\begin{lemma}
\label{l:stronger} 
Let $A\subseteq X$ be cell complexes of dimension at most $2n-4$ and $f_1, f_2: X\to\R^n$
be such that $A=|f_1|^{-1}[r,\infty)=|f_2|^{-1}[r,\infty)$. Then
$$
Z_{< r}(f_1\boxplus f_2)\supseteq \{Z_1\cup Z_2\:Z_1\in Z_{< r}(f_1)\text{ and }Z_2\in Z_{< r}(f_2)\}.
$$
\end{lemma}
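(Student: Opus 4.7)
By Lemma~\ref{l:perturb-ext}, it suffices to find a single map $h\:X\to\R^n$ with $h^{-1}(0)=Z_1\cup Z_2$ and $h|_A$ homotopic (as a map to $\R^n\setminus\{0\}$) to a representative of $\bar f_1\boxplus \bar f_2$; the direction (B)$\subseteq$(A) of that lemma then supplies a strict $r$-perturbation of $f_1\boxplus f_2$ with the prescribed zero set. To set up, apply the (A)$\subseteq$(C) direction of Lemma~\ref{l:perturb-ext} to each $f_i$ to write $Z_i=e_i^{-1}(0)$ for some extension $e_i\:X\to\R^n$ of $f_i|_A$.

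The construction of $h$ is a gluing driven by cellular approximation. Suppose first that $Z_1\cap Z_2=\emptyset$. Choose small disjoint compact neighborhoods $N_1\supset Z_1$ and $N_2\supset Z_2$, both disjoint from $A$, small enough that the restriction $\bar e_{3-i}|_{N_i}\:N_i\to S^{n-1}$ is null-homotopic. This is possible because $e_{3-i}$ is bounded away from zero on any compact neighborhood of $Z_i$ disjoint from $Z_{3-i}$: by shrinking $N_i$ until $e_{3-i}(N_i)$ lies inside a convex subset of $\R^n\setminus\{0\}$, null-homotopy follows; a preliminary homotopy of $e_{3-i}$ rel $A\cup Z_{3-i}$ near $Z_i$ may be required when $Z_i$ has nontrivial topology. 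The homotopy extension property then permits us to modify $e_{3-i}$ on $N_i$ to a map $\tilde e_{3-i}$ that equals a fixed $w_0\in\R^n\setminus\{0\}$ on a strictly smaller compact neighborhood $K_i\subset\operatorname{int}(N_i)$ of $Z_i$, coincides with $e_{3-i}$ outside $N_i$, remains nowhere zero on $N_i$, and still satisfies $\tilde e_{3-i}^{-1}(0)=Z_{3-i}$.

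On $K:=X\setminus(\operatorname{int}(K_1)\cup\operatorname{int}(K_2))$ both $\tilde e_i$ are nonzero, and the pair map $(\bar{\tilde e_1},\bar{\tilde e_2})\:K\to S^{n-1}\times S^{n-1}$ already lies in the wedge $S^{n-1}\vee S^{n-1}$ on $\partial K_1\cup\partial K_2$ (in $S^{n-1}\times\{*\}$ on $\partial K_1$ and $\{*\}\times S^{n-1}$ on $\partial K_2$, where $*=w_0/|w_0|$). Since $\dim K\leq 2n-4$ coincides with the dimension of the $(2n-4)$-skeleton $S^{n-1}\vee S^{n-1}$ of $S^{n-1}\times S^{n-1}$, the cellular approximation theorem relative to $\partial K_1\cup\partial K_2$ yields a map $c\:K\to S^{n-1}\vee S^{n-1}$ homotopic rel boundary to $(\bar{\tilde e_1},\bar{\tilde e_2})$. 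Setting $\bar h_K:=\nabla\circ c$, the boundary identities give $\bar h_K|_{\partial K_i}=\bar e_i|_{\partial K_i}$. Define $h\:X\to\R^n$ by $h|_K(x):=r(x)\,\bar h_K(x)$ for a continuous positive scalar $r\:K\to(0,\infty)$ matching $|e_i|$ on $\partial K_i$, and $h|_{K_i}:=e_i$. The result is continuous at $\partial K_i$ by the matching, $h^{-1}(0)=Z_1\cup Z_2$ (the only zeros lie in $K_i$, contributed by $e_i$), and $h|_A$ normalized equals $\nabla\circ c|_A$; since $c|_A$ is a cellular approximation of $(\bar f_1,\bar f_2)|_A$, this represents $\bar f_1\boxplus\bar f_2$ by the very definition of the operation.

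Two technical obstacles remain. The first, and harder, is arranging the null-homotopy of $\bar e_{3-i}|_{N_i}$: this in general requires a delicate preliminary homotopy of $e_{3-i}$ on a neighborhood of $Z_i$ using the dimension hypothesis and the fact that $e_{3-i}$ is nonzero there. The second is the overlap case $Z_1\cap Z_2\ne\emptyset$, which forbids disjoint neighborhoods; this can be handled by first perturbing $g_2$ slightly within $Z_{<r}(f_2)$ to separate the zero sets, running the argument on the perturbed sets, and then passing to a limit using the openness of the strict inequality defining~$Z_{<r}$.
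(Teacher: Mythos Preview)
Your overall strategy matches the paper's: reduce via Lemma~\ref{l:perturb-ext} to building a map $h\:X\to\R^n$ with $h^{-1}(0)=Z_1\cup Z_2$ whose normalization on $A$ represents $\bar f_1\boxplus\bar f_2$, and construct $\bar h$ by pushing $(\bar e_1,\bar e_2)$ into $S^{n-1}\vee S^{n-1}$ via cellular approximation and composing with the fold $\nabla$. The divergence is in how you handle the fact that $\bar e_i$ is undefined on $Z_i$.

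Your plan---excise neighborhoods $K_i\supset Z_i$ and do the cellular approximation on the complement $K$, after first homotoping $e_{3-i}$ to be constant on $K_i$---has a genuine gap at exactly the step you flag as ``harder'': the null-homotopy of $\bar e_{3-i}|_{N_i}$ need not exist, and the dimension bound does not rescue it. Take $n\ge 4$, $X=S^{n-1}\times[0,1]$ (so $\dim X=n\le 2n-4$), $A=S^{n-1}\times\{0,1\}$, $f_1(x,t)=r|2t-1|\,v$ for a fixed unit vector $v$, and $f_2(x,t)=r|2t-1|\,x$. Then $Z_1:=S^{n-1}\times\{1/2\}\in Z_{<r}(f_1)$ via the extension $e_1=f_1$, and $Z_2:=\emptyset\in Z_{<r}(f_2)$ via the extension $e_2(x,t)=r\,x$. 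Now $\bar e_2$ restricts to the identity on every slice $S^{n-1}\times\{t\}$; any homotopy of $e_2$ rel $A$ keeps $\bar e_2|_{S^{n-1}\times\{0\}}$ equal to the identity, and since $Z_1$ is homotopic in $X$ to that boundary slice, $\bar e_2|_{Z_1}$ remains in the class of the identity map $S^{n-1}\to S^{n-1}$ after any such homotopy. No neighborhood $N_1$ of $Z_1$ can carry a null-homotopic $\bar e_2|_{N_1}$. Your second obstacle is likewise unresolved: if $Z_1=Z_2$ no small perturbation separates them, and there is no limit argument producing a map with zero set exactly $Z_1\cup Z_2$.

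The paper sidesteps both problems in one stroke by never localizing near the $Z_i$. It exhausts $Y:=X\setminus(Z_1\cup Z_2)$ by an increasing sequence of subcomplexes $Y_0\subseteq Y_1\subseteq\cdots$ (cut out by PL approximations to $\dist(Z,\cdot)$), takes compatible cellular approximations $a_i\:Y_i\to S^{n-1}\vee S^{n-1}$ of $(\bar g_1,\bar g_2)|_{Y_i}$ with $a_i$ extending $a_{i-1}$, and sets $\bar h:=\bigcup_i\nabla a_i$ on $Y$ and $h:=\dist(Z,\cdot)\cdot\bar h$. This needs no disjointness hypothesis, no null-homotopy step, and no limit; the restriction $\bar h|_A$ is by construction a representative of $\bar f_1\boxplus\bar f_2$, and Lemma~\ref{l:perturb-ext} finishes.
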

%
%It remains to prove Lemma \ref{l:stronger}. 
\begin{proof}[Proof of Lemma~\ref{l:stronger}] Let $g_i$ be a strict $r$-perturbation of
$f_i$ and $Z_i=g^{-1}(0)$ for $i=1,2$. We want to find a strict $r$-perturbation $g$ of $f_1\boxplus f_2$ with $g^{-1}(0)=Z:=Z_1\cup Z_2$.
Let as represent the functions $g_i$ in polar coordinates as $g_i=\bar g_i \, |g_i|$ where $\bar g_i: X\setminus Z_i\to S^{n-1}$ 
is defined by $g_i/|g_i|$. The map $g$ will be constructed in polar coordinates as $g=\bar g\,n$ for $\bar g: X\setminus Z\to S^{n-1}$
and $n: X\to\R_0^+$ such that $n$ will be zero on $Z$. The map $\bar g$ will be essentially $\bar g_1\boxplus \bar g_2$.
The only issue is that the definition of $\boxplus$ requires the domain to be  a cell complex 
(because it uses a cellular approximation of the map $(\bar g_1,\bar g_2)$) which $Y:=X\setminus Z$ is not. 
Thus we will need a sequence of cell complexes $Y_0\subseteq Y_1\subseteq\ldots$ contained in $Y$ such that $\bigcup Y_i=Y$. 
%This is a standard technical procedure which can go as follows \cm{citace?}. 
Let $\dist$ be a metrization of $X$ 
%with $\dist(A\times [0,1],Z)>0.$ Furthermore, 
and for each $i=1,2,\ldots$ let $d_i\:X \to\R$ be PL functions less than $2^{-i-2}$ far from $\dist(Z, \cdot)$ in the max-norm. 
By PL we mean that each  $d_i$ is simplexwise linear on some triangulation of $X$. 
Let $O_i:=d_i^{-1}[0,2^{-i})$ and $Y_i:=X\setminus O_i$. 
We have that $Y_i\supseteq Y_{i-1}$ as sets and after a possible subdivision of these cell complexes we may assume that
$Y_{i-1}$ is a subcomplex of $Y_i$. 
Let $a_i\:Y_i\to S^{n-1}\times S^{n-1}$ be a cellular approximation of $((\bar g_1)|_{Y_i}, (\bar g_2)_{Y_i})$ that extends $a_{i-1}$ if $i>1$. 
Define $\bar h:=\bigcup_i \nabla a_i$ and $n:=\dist(Z,\cdot)$. Then $Z$ is the zero set of $h=\bar h\,n$ and the restriction of $\bar h$ to $A$ equals $(\bar g_1)|_A\boxplus (\bar g_2)|_A$.
Under the assumption $\dim A\leq 2n-4$, $(\bar g_1)|_A\boxplus (\bar g_2)|_A$ is well defined up to homotopy, is homotopic to 
$\bar f_1\boxplus \bar f_2$ and it follows from Lemma \ref{l:perturb-ext} that $Z\in Z_{<r}(f_1\boxplus f_2)$. 
%This completes the proof of the lemma.
 \end{proof}

\begin{proof}[Proof of Theorem A, Part (2)]
Assume first that $\dim X\leq 2n-4$.
For $\bar f_2\:A\to S^{n-1}$  extendable to $X\to S^{n-1}$ (i.e., $\emptyset\in Z_{< r}(f_2)$), 
we obtain $Z_{< r}(f_1\boxplus f_2)\supseteq Z_{< r}(f_1)$ by Lemma~\ref{l:stronger}. 
Consequently, since $\boxminus\bar f_2$ is also extendable,
$$
Z_{<r}(f_1) = Z_{< r}\big((f_1\boxplus f_2)\boxplus(\boxminus f_2)\big) \supseteq Z_{< r}(f_1\boxplus f_2).
$$
Hence $Z_{<r}(f_1)=Z_{<r}(f_1\boxplus f_2)$ for any $f_2$ such that $\bar f_2$ is extendable to a map $X\to S^{n-1}$. 
It follows that $Z_{<r}(f)$ only depends on the coset $[\bar f]+\im i^*$ in $[A, S^{n-1}]$.

Finally, we discuss the special case $m=2n-3$ that goes along the same lines with the following differences.
We replace $A$ by $A':=\partial A$, $X$ by $X':=\overline{X\setminus A}$ and $f$ by $f':=f|_{X'}$. 
Clearly \(\Zrf=Z_{<r}(f')\). The space $X'$ is still at most $2n-3$ dimensional but $A'$ is  at most $2n-4$ dimensional.
Instead of (\ref{e:exact}) we consider the sequence
$$
[X',S^{n-1}]\stackrel{i'^*}{\to} [A',S^{n-1}]\stackrel{\delta'}{\to} [X'/A',S^{n}]\stackrel{\iota^*}{\leftarrow}[X/A,S^n]
$$
which are all Abelian groups possibly except $[X', S^{n-1}]$ which is only a set. The map $\iota^*$ is induced by the inclusion $\iota\:(X',A')\to(X,A)$ and is an isomorphism by excision \cite[Chapter VII, Theorem 3.2]{HuBook}.
By \cite[Chapter VII, Lemma 9.1]{HuBook} this sequence is still exact at $[A',S^{n-1}]$,
that is, $\ker\delta'=\mathrm{Im}(i'^*)$. In particular, this implies that $\mathrm{Im}(i'^*)$ is a subgroup of $[A', S^{-1}]$ 
and $\delta'$ maps the quotient $[A',S^{n-1}]/\mathrm{Im}(i'^*)$ isomorphically onto $\mathrm{Im}(\delta')$, so that the preimage of
$[f'_{/A'}]$ is $[\bar f']+\mathrm{Im}(i'^*)$ which determines  $Z_{<r}(f')$ as above. It remains to check is that $\iota^*([f_{/A}])=[f'_{/A'}]$. This follows from the naturality of the exact sequence~\eqref{e:exact}, in particular, the commutativity of the square
$$
\xymatrix{
[A,S^{n-1}]\ar[d]^{\iota_{|A'}^*}\ar[r]^\delta&[X/A,S^n]\ar[d]^{\iota^*}\\
[A',S^{n-1}]\ar[r]^{\delta'}&[X'/A',S^{n}]
}
$$ as in \cite[Chapter VII, Proposition 4.1]{HuBook}, and from observing that 
$$\iota_{|A'}^*[\bar f] = [\bar f'].$$ 
\end{proof}

\section{Cohomotopy persistence modules.}\label{a:comput}
\label{s:modules}
\heading{Stability of cohomotopy persistence modules.} 
Let 
$$\Pi=\big(\varphi_{s,r}\:(\pi_r,a_r)\to(\pi_s,a_s)\big)_{s\geq r>0} \quad\text{ and}\quad  
\Pi'=\big(\varphi_{s,r}'\:(\pi_r',a_r')\to(\pi_s',a_s')\big)_{s\geq r>0}
$$
%and $\Pi'=\big((\pi_r', a_r')_r, \varphi_{s,r}'\big)_{0<r\leq s}$ 
be two pointed persistence modules.
We define their interleaving distance $d_I(\Pi, \Pi')$ as the infimum over all $\delta>0$
such that there exists a family of homomorphisms $u_r: (\pi_r, a_r)\to (\pi_{r+\delta}', a_{r+\delta}')$ and
$v_r: (\pi_r', a_r')\to (\pi_{r+\delta}, a_{r+\delta})$ such that $v_{r+\delta} u_r=\varphi_{r+2\delta, r}$ and
$u_{r+\delta} v_r=\varphi_{r+2\delta, r}'$ holds for all $r>0$.

%For let every pointed persistence module $\Pi=((\pi_r, a_r)_r,(\varphi_{s,r})_{s \ge r})$, we denote by $\Pi(\epsilon)$ the persistence module obtained from \(\Pi\) by an \emph{$\epsilon$-shift}, that is, defined by   $\pi(\epsilon)_r:=\pi_{r+\epsilon}$ and $\varphi(\epsilon)_{s,r} := \varphi_{s+\epsilon,r+\epsilon}$.
%For a homomorphism \(f\:\Pi\to\Pi'\) we define the \(\epsilon\)-shift $f(\epsilon)\:\Pi(\epsilon)\to\Pi'(\epsilon)$ of \(f\) by \(f(\epsilon)_r:=f_{r+\epsilon}\). Finally for each persistence module $\Pi=((\pi_r)_r,(\varphi_{s,r})_{s\ge r}),$ and $\epsilon>0,$ we define the \emph{transition homomorphism} $\varphi_\Pi(\epsilon)\:\Pi\to\Pi(\epsilon)$ by $\varphi_\Pi(\epsilon)_r := \varphi_{r+\epsilon,r}$. Then we say that $\Pi$ and $\Pi'$ have interleaving distance at most $\epsilon$, whenever there are homomorphisms $f\:\Pi\to\Pi'(\epsilon)$ and $g\:\Pi'\to\Pi(\epsilon)$ such that $f(\epsilon)g=\varphi_{\Pi'}(2\epsilon)$ and $g(\epsilon) f =\varphi_\Pi(2\epsilon)$.

The first observation on cohomotopy persistence modules is that the assignment $f\mapsto \Pi_f$ is stable with respect to perturbations of $f$, 
namely, the interleaving distance of $\Pi_f$ and $\Pi_{f'}$ is bounded by \(\|f-f'\|\). 
Let $A_{r}=\{x: |f(x)|\geq r\}$ and $A_r'=\{x: |f'(x)|\geq r\}$ for all $r$ and assume that $\|f-f'\|<\delta$ for some $\delta>0$. 
This immediately implies $A_{r+\delta}'\subseteq A_{r}$. The straight line homotopy between
$f$ and $f'$ is nowhere zero on $A_{r+\delta}'$ and it induces a homotopy between the sphere-valued functions $\bar f|_{A_{r+\delta}'}$
and $\bar f'|_{A_{r+\delta}'}$. The inclusion $\iota: (X,A_{r+\delta}')\hookrightarrow (X,A_r)$ induces a commutative 
diagram\footnote{See~\cite[Chapt. VII, Prop. 4.1]{HuBook} for the naturality of $\delta$ and \cite[Lemma 3.1]{HuBook} for the isomorphism 
$[(X,A), (S^n,*)]\simeq [X/A,S^n]$.}
$$
\begin{array}{ccc}
[A_r, S^{n-1}] & \stackrel{\delta}{\to} & [X/A_r, S^n] \\
\downarrow \iota^*   &  & \downarrow \iota_{/A}^*  \\
\hbox{} [A_{r+\delta}', S^{n-1}] & \stackrel{\delta}{\to} & [X/A_{r+\delta}', S^n] 
\end{array}
$$
and the equality $\iota^*[\bar f]=[\bar f']$ immediately implies that $\iota_{/A}^*$ maps $[f_{/A_r}]$ to $[f_{/A_{r+\delta}'}']$.
So, the inclusion $\iota$ induces an interleaving morphism $u_r: \pi_r\to \pi_{r+\delta}'$ that maps the distinguished element 
to the distinguished element. The other interleaving morphism $v_r: \pi_r'\to\pi_{r+\delta}$ is defined similarly and 
the compositions $v_{r+\delta} u_r$ and $u_{r+\delta} v_r$ behave as required.

We claim that $d_I(\Pi_f, \Pi_{f'})$ is even bounded by $\|f-f'h\|$ where $h$ is any self-homeomorphism of $X$ and hence
the interleaving distance is bounded by the natural pseudo-distance between $f$ and~$f'$.
Let $A_r:=\{x: |f'(x)|\geq r\}$, $A_r':=h^{-1}(A_r)$, $\pi_r$ and $\pi_r'$ be the image of the connecting homomorphism 
$\delta: [A_r, S^{n-1}]\to [X/A_r, S^n]$ and $\delta': [A_r', S^{n-1}]\to [X/A_r', S^n]$ respectively.
The homeomorphism $h$ induces a homotopy equivalence $h: (X, A_r')\to (X, A_r)$ and for any $0<r\leq s$ and
$$
\begin{array}{ccc}
(X,A_s) & \stackrel{i}{\hookrightarrow} & (X, A_r)\\
\uparrow {\scriptstyle{h}} & & \uparrow {\scriptstyle{h}} \\
(X, A_s') & \stackrel{i}{\hookrightarrow} & (X, A_r')\\
\end{array}
$$
induces a commutative diagram on the level of cohomotopy groups. 
Using the naturality of the sequence (\ref{e:les1}), $h^*$ maps $\pi_r$ isomorphically to $\pi_r'$ and it maps 
$[f'_{/A_r}]$ to $[(f'h)_{/A_r'}]$ by definition of the induced map. It follows that $\Pi_{f'}$ and $\Pi_{f'h}$ are isomorphic and
$$
d_{I}(\Pi_f, \Pi_{f'})=d_I(\Pi_f, \Pi_{f'h})\leq \|f-f'h\|.
$$
It is an elementary observation that  $\Pi_f$ and $\Pi_{rf}$ are also isomorphic for any rotation $r$ of $\R^n$.

\heading{Construction of pointed barcode.}
Let $I$ be an interval and $\F$ a field. An \emph{interval module} $C(I)$ is by definition an (unpointed) persistence module 
$(V_r, \varphi_{s,r})_{s\geq r>0}$ such that $V_r\simeq \F$ for $r\in I$, $V_r$ is trivial for $r\notin I$, 
$\varphi_{s,r}$ is the identity if $r,s\in I$ and the zero map otherwise. Any pointwise finite dimensional persistence module
consisting of vector spaces over $\F$ is isomorphic to a direct sum of interval modules, the corresponding intervals as well 
as their multiplicities being uniquely determined~\cite{Boevey:2012,Chazal:2012}.

Let $X,n,f,\F$ be such as in Theorem~\ref{t:computability}.
We will show in Lemma~\ref{l:discret} that there are only finitely many \emph{critical values} 
$s_1,\ldots, s_k$ such that $\varphi_{s,r}$ is an isomorphism whenever $[r,s)$ is disjoint from $\{s_1,\ldots,s_k\}$. 
Cohomotopy groups of finite simplicial complexes are finitely generated and it follows that
$\Pi_f\otimes \F$ is a pointwise finite dimensional pointed persistence module, that is, each $\pi_r\otimes \F$
is a finite dimensional vector space. 

Under these assumptions, we have the following:
\begin{lemma}
\label{l:direct}
The distinguished elements $[f_{/A_r}]\otimes 1$ in $\Pi_f\otimes \F$ generate a direct summand of $\Pi_f\otimes \F$.
\end{lemma}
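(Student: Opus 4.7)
The plan is to exhibit the submodule $D \subseteq \Pi_f \otimes \F$ generated by the distinguished elements as one summand in a Crawley--Boevey decomposition of $\Pi_f \otimes \F$ into interval modules, after an adapted change of basis. I would first identify $D$ itself as an interval module: writing $a_r := [f_{/A_r}]$, one has $D_r = \F \cdot (a_r \otimes 1)$, which is at most one-dimensional at each $r$. Because $\varphi_{s,r}(a_r) = a_s$, the support $I^* := \{r > 0 : a_r \neq 0\}$ is downward closed, and the restrictions $\varphi_{s,r}|_{D_r}$ are identities on copies of $\F$ when both $r, s \in I^*$, and zero otherwise. Hence $D \cong C(I^*)$ as a persistence module.

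Next, I would apply the Crawley--Boevey structure theorem for pointwise finite-dimensional persistence modules to obtain $\Pi_f \otimes \F \cong \bigoplus_\ell C(I_\ell)$, with coherent bar bases $\{e_\ell^{(r)}\}_{r \in I_\ell}$ satisfying $\varphi_{s,r}(e_\ell^{(r)}) = e_\ell^{(s)}$ if $s \in I_\ell$ and $0$ otherwise. Expanding $a_r \otimes 1 = \sum_\ell c_\ell^{(r)} e_\ell^{(r)}$ and comparing coefficients in $\varphi_{s,r}(a_r \otimes 1) = a_s \otimes 1$ yields two constraints: (i) for $r \leq s$ with both in $I_\ell$, the $e_\ell^{(s)}$-coefficient gives $c_\ell^{(r)} = c_\ell^{(s)}$, so the coefficient is constant along each bar and will be denoted $c_\ell$; (ii) if $r < s$ with $s \in I_\ell$ but $r \notin I_\ell$, then $\varphi_{s,r}(a_r \otimes 1)$ has vanishing $e_\ell^{(s)}$-coefficient while $a_s \otimes 1$ has coefficient $c_\ell$, forcing $c_\ell = 0$. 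Hence every $I_\ell$ in the support $L := \{\ell : c_\ell \neq 0\}$ must contain arbitrarily small positive $r$. Finiteness of critical values (Lemma~\ref{l:discret}) makes $L$ finite, so each such $I_\ell$ has the form $(0, r_\ell)$ or $(0, r_\ell]$, and $I^* = \bigcup_{\ell \in L} I_\ell$ coincides with the longest of them, say $I_{\ell^*}$.

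Finally, I would change basis on the $\ell^*$-summand by replacing $e_{\ell^*}^{(r)}$ with $a_r \otimes 1$ for every $r \in I_{\ell^*}$. The replacement is valid pointwise because $c_{\ell^*} \neq 0$, and it extends to an automorphism of $\Pi_f \otimes \F$ fixing the other summands, because both $(e_{\ell^*}^{(r)})_{r \in I_{\ell^*}}$ and $(a_r \otimes 1)_{r \in I_{\ell^*}}$ are coherent families under the structural maps, while for $s \notin I^*$ both $\varphi_{s,r}(a_r \otimes 1)$ and $a_s \otimes 1$ vanish. After this automorphism, the summand $C(I_{\ell^*})$ coincides with $D$, exhibiting $D$ as a direct summand. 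I expect the main subtlety to lie precisely here: verifying that the pointwise basis change assembles into a well-defined automorphism of the whole persistence module; this reduces to the coefficient-constancy and the support conditions established in the previous paragraph.
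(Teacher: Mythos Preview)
Your proposal is correct and follows essentially the same route as the paper: identify the distinguished submodule as an interval module $C(I^*)$, decompose $\Pi_f\otimes\F$ into interval summands, show that the bars supporting $a_r\otimes 1$ are all of the form $(0,r_\ell)$ or $(0,r_\ell]$ contained in $I^*$ with one of them equal to $I^*$, and then perform a change of basis swapping the generator of that summand for $(a_r\otimes 1)_r$. The paper makes the final automorphism explicit by writing an arbitrary element as $c_k a_r + \sum_{j<k}(c_j-c_k)(v_j)_r + w'$, which is exactly the verification you flag as the main subtlety.
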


\begin{proof}
The distinguished submodule generated by the distinguished element is isomorphic to an interval module $C(I)$, 
as it consists of at most one-dimensional vector spaces and $\varphi_{s,r}$ maps a generator to a generator. 
For simplicity, let us denote $[f_{/A_r}]\otimes 1$ by $a_r$ for all $r>0$. 
The corresponding (possibly empty) interval $I$ consists of all $r$ for which $a_r\neq 0$.

If $C(I)=0$, then $C(I)$ is trivially a direct summand. Otherwise $I$ contains a positive $r>0$ and consequently all $t\in (0,r)$.
Let $t>0$ be smaller than any of the critical values $s_i$ of $f$. It follows that $\varphi_{t,s}$ is an isomorphism for all
$0<s\leq t$.  

Choose a decomposition $\Pi_f\otimes \F\simeq \oplus_{\lambda\in\Lambda} C({\lambda})$ of $\Pi_f\otimes \F$ 
into interval modules, where $\Lambda$ is a multiset of intervals. 
The inclusion maps $a_t$ into a finite combination $\sum_{j=1}^k (v_{j})_t$ where $0\neq (v_{j})_t\in C({\lambda_j})_t$ 
for some $\lambda_1,\ldots,\lambda_k\in\Lambda$ (some $\lambda_j$'s may be equal to each other but we assume that the number of intervals
equal to $\lambda_i$ is at most the multiplicity of $\lambda_i$ in $\Lambda$).

We claim that for all $j=1,\ldots, k$, $\lambda_j$ has the form $(0,l_j)$ or $(0, l_j]$ for some $l_j$ such that $\lambda_j\subseteq I$.
Assume, for contradiction, that some $\lambda_j$ does not contain a number $s\in (0,t)$.
Then the projection of $a$ to the direct summand $C({\lambda_j})$ is zero in time $s$ and nonzero in time $t$ which contradicts the
commutativity of 
$$
\begin{array}{ccc}
C(\lambda_j)_s     & \stackrel{\varphi}{\longrightarrow} & C(\lambda_j)_t\\
\uparrow & & \uparrow \\
C(I)_s & \stackrel{\varphi}{\longrightarrow} & C(I)_t\\
\rotatebox{90}{$\in$} && \rotatebox{90}{$\in$}\\
a_s && a_t
\end{array}.
$$
Similarly, $r\in {\lambda_j}$ is impossible for $t<r\notin I$, because $a_r=0$ contradicts $(v_{j})_r\neq 0$. 

Further we claim that at least one $\lambda_j$ is equal to $I$.  Otherwise we could find an $s\in I$ 
that is disjoint from all $\lambda_j$ and derive a contradiction with $0=\sum_j (v_j)_s=\varphi_{s,t} a_t=a_s\neq 0$. 
Suppose, without loss of generality, that $\lambda_k=I$.
%For all $j$ we have $\lambda_j\subseteq \lambda_k$.
Summarizing our construction, we have that $a_t=\sum_{j=1}^k (v_j)_t$ holds for each $t>0$ and $(v_k)_t$ is nonzero iff $t\in I$.

We claim that in the decomposition to interval modules, we may replace $C(\lambda_k)$ with $C(I)$ and obtain another decomposition:
this will prove that $C(I)$ is a direct summand. More formally, we claim that
$$
\Pi_f\otimes \F\simeq C(I)\oplus \left(\bigoplus_{\lambda\in\Lambda\setminus\{\lambda_k\}} C(\lambda)\right)
$$
where $\Lambda\setminus\{\lambda_k\}$ is a multiset where the multiplicity of $\lambda_k$ is one less than its multiplicity in $\Lambda$.
Let $w_r\in (\Pi_f\otimes \F)_r$ be arbitrary. There is a unique decomposition $w_r=\sum_{j=1}^k c_j (v_j)_r + w'$ where $c_j\in \F$ and $w'$
is a combination of elements in the interval modules $C(\lambda)$ for $\lambda\notin \{\lambda_1,\ldots, \lambda_k\}$. 
Another way to write this is $w_r=c_k (v_1+\ldots +v_k)_r + (\sum_{j=1}^{k-1} (c_j-c_k) (v_j)_r)+w'$, or equivalently 
$c_k a_r + (\sum_{j=1}^{k-1} (c_j-c_k) (v_j)_r)+w'$ which yields the projections to the new decomposition.
\end{proof}

We define a \emph{pointed barcode} to be a pair $(\Lambda,I)$ where $\Lambda$ is a multiset of intervals and $I$ 
is an interval that occurs in $B$ at least once.\footnote{If $I$ occurs $k>1$ times in $B$, 
then we cannot distinguished which of the $k$ copies of $I$ is the distinguished intervals.}
We may represent $\Pi_f\otimes \F$ via a pointed barcode, the multiset $\Lambda$ corresponding to the unpointed decomposition 
of $\Pi_f\otimes \F$ into interval modules, and the interval $I$ corresponding to the direct summand generated by the distinguished elements. 

The usual bottleneck distance generalizes to this structure as follows. 
\begin{definition}
%Let $\mathcal{B}_1$ and $\mathcal{B}_2$ be two pointed barcodes and $B_1, B_2$ their 
%A matching between $B_1$ and $B_2$ is a bijection between representations of
% subsets $B_1'\subseteq B_1$
%and $B_2'\subseteq B_2$
%
The bottleneck distance 
$d_\mathcal{B}((\Lambda_1, I_1), (\Lambda_2, I_2)$ between two pointed barcodes is the infimum of all $\delta$ such that there exists 
a matching\footnote{For a rigorous definition of barcode matching, see e.g.~\cite{Bauer:2013}. 
Note that there a barcode is defined to be a {set} that \emph{represents} the multiset $\Lambda_i$.}
between $\Lambda_1$ and $\Lambda_2$ such that
\begin{itemize}
\item All intervals of length at least $2\delta$ are matched,
\item The matching shift end-points of intervals at most $\delta$-far,
\item If either of the distinguished intervals $I_1, I_2$ is matched, then both of them are matched and they are matched together.
\end{itemize}
\end{definition}
Note that if both distinguished bars have lengths smaller than $2\delta$, then they are allowed to be unmatched.
The next lemma addresses the stability of the bottleneck distance of pointed modules.
\begin{lemma}
Let $f,f'$ be as in Theorem~\ref{t:computability} and let $(\Lambda,I)$ and $(\Lambda',I')$ be the pointed barcode representing 
$\Pi_f\otimes \F$ and $\Pi_{f'}\otimes \F$, respectively. 
Then the bottleneck distance $d_{\mathcal{B}}((\Lambda,I), (\Lambda', I'))$ is bounded by
the interleaving distance $d_I(\Pi_f\otimes \F, \Pi_{f'}\otimes \F)$.
\end{lemma}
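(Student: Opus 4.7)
My plan is to reduce the statement to the classical isometry theorem for unpointed persistence modules~\cite{Chazal:2012} by decomposing a given interleaving along the distinguished direct summands. Fix $\delta > d_I(\Pi_f \otimes \F, \Pi_{f'} \otimes \F)$ and a $\delta$-interleaving $(u_r, v_r)_{r > 0}$; by the definition of a morphism of pointed persistence modules, $u_r$ sends $a_r := [f_{/A_r}] \otimes 1$ to $a'_{r+\delta}$, and symmetrically for $v_r$.

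First I would fix decompositions $\Pi_f \otimes \F \simeq C(I) \oplus M$ and $\Pi_{f'} \otimes \F \simeq C(I') \oplus M'$, where $C(I)$ and $C(I')$ are the distinguished interval summands obtained from Lemma~\ref{l:direct} and $M$, $M'$ are the direct sums of the remaining interval modules, so that $\Lambda = \{I\} \sqcup \Lambda_M$ and $\Lambda' = \{I'\} \sqcup \Lambda_{M'}$. The key observation is that the interleaving morphisms respect these decompositions in a triangular sense: at each parameter $C(I)_r$ is either zero or one-dimensional and spanned by $a_r$, and since $u_r(a_r) = a'_{r+\delta} \in C(I')_{r+\delta}$, the morphism $u_r$ restricts to a map $C(I)_r \to C(I')_{r+\delta}$; the analogous statement holds for $v_r$. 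Consequently, writing $u_r$ and $v_r$ as $2 \times 2$ block matrices with respect to the fixed decompositions yields upper-triangular matrices whose diagonal blocks I denote $(\alpha_r, \gamma_r)$ and $(\alpha'_r, \gamma'_r)$.

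Next I would expand the interleaving identities $v_{r+\delta} u_r = \varphi_{r+2\delta, r}$ and $u_{r+\delta} v_r = \varphi'_{r+2\delta, r}$ block by block; since the structure maps are block-diagonal with respect to the fixed decompositions, one reads off that $(\alpha, \alpha')$ is a $\delta$-interleaving of $C(I)$ with $C(I')$ and $(\gamma, \gamma')$ is a $\delta$-interleaving of $M$ with $M'$. The elementary classification of interleavings of single interval modules then implies that either both $I$ and $I'$ have length at most $2\delta$, or their endpoints are within $\delta$ of each other---in either case compatible with the three bullet conditions defining $d_{\mathcal{B}}$ on pointed barcodes. Meanwhile, the classical isometry theorem applied to $(\gamma, \gamma')$ yields a $\delta$-matching between $\Lambda_M$ and $\Lambda_{M'}$ as unpointed barcodes. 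Concatenating the two matchings gives a pointed $\delta$-matching between $(\Lambda, I)$ and $(\Lambda', I')$, and letting $\delta \downarrow d_I(\Pi_f \otimes \F, \Pi_{f'} \otimes \F)$ finishes the proof.

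I expect the main technical obstacle to be the careful verification of the triangular form---one must argue it at parameters $r$ where $C(I)_r$ is nonzero but $C(I')_{r+\delta}$ vanishes (or vice versa) and confirm that the block-matrix computation for the interleaving compositions still goes through unchanged. A secondary point to settle is the degenerate case where one of the distinguished classes is identically zero, so that the corresponding distinguished summand is trivial and the associated pointed barcode has no distinguished interval; the classification forces the other distinguished interval to have length at most $2\delta$, so leaving it unmatched is admissible and the claim reduces directly to the unpointed isometry theorem.
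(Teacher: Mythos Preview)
Your proposal is correct and follows essentially the same strategy as the paper: split off the distinguished interval summand, observe that the interleaving maps preserve it (because they carry distinguished elements to distinguished elements), and then apply the unpointed stability/isometry theorem separately to the distinguished part and to the remainder. The only cosmetic difference is that the paper phrases the second step via the quotient modules $(\Pi_f\otimes\F)/C(I)$ and $(\Pi_{f'}\otimes\F)/C(I')$ rather than via block-triangular matrices; this sidesteps your ``main technical obstacle'' entirely, since the induced map on quotients is automatically well defined regardless of whether $C(I')_{r+\delta}$ vanishes, and the identification of the quotient with the complementary summand $M$ is immediate.
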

The chain of inequalities 
$$d_{\mathcal{B}}((\Lambda, I), (\Lambda', I'))\leq d_I(\Pi_f\otimes \F, \Pi_{f'}\otimes \F)\leq d_I(\Pi_f, \Pi_{f'})\leq \|f-f'\|$$ 
then implies the stability of the bottleneck distance with respect to perturbations of $f$.
\begin{proof}
Assume that $\Pi_f\otimes \F$ and $\Pi_{f'}\otimes \F$ are $\delta$-interleaved
and let $u_r: (\pi_r\otimes \F, [f_{/A_r}]\otimes 1)\to (\pi_{r+\delta}', [f'_{/A_{r+\delta}'}]\otimes 1)$ 
and $v_r: (\pi_r'\otimes \F, [f'_{/A_{r}'}]\otimes 1)\to (\pi_{r+\delta}, [f_{/A_{r+\delta}}]\otimes 1)$ 
be the families of interleaving morphisms. 

Using the decompositions from Lemma~\ref{l:direct}, we have that 
$\Pi_f\otimes \F\simeq C(I)\oplus D$ and $\Pi_{f'}\otimes \F\simeq C(I')\oplus D'$ where 
$C(I), C(I')$ are the distinguished submodules and $D$, $D'$ their complements. The interleaving morphisms $u_r$ and $v_r$
map $C(I)_r$ to $C(I')_{r+\delta}$ and $C(I')_r$ to $C(I)_{r +\delta}$, respectively. 
%Further, $\Pi_f\otimes \F$ 
%is isomorphic to $C(I)\oplus (\Pi_f\otimes \F)/C(I)$ and similarly for $\Pi_{f'}\otimes \F$. 
The interleaving $u_r$, $v_r$ then induce the maps
$u_r^{(1)}: C(I)_r\to C(I')_{r+\delta}$ between the distinguished submodules and 
$u_r^{(2)}: (\pi_r\otimes \F)/C(I)_r \to (\pi_{r+\delta}'\otimes \F)/C(I')_{r+\delta}$ between the factor modules,
and similarly, $v_r$ induces analogous maps $v_r^{(1)}$ and $v_r^{(2)}$. 
The factor modules $(\Pi_f\otimes\F)/C(I)$ and $(\Pi_{f'}\otimes\F)/C(I')$ are isomorphic to the complementary modules $D$ and $D'$ 
respectively.

The families $u_r^{(1)}$ and $v_r^{(1)}$ define a $\delta$-interleaving between the distinguished submodules that can be represented
each by at most one bar in the barcode representation. By the standard stability theorem for unpointed barcodes~\cite[Thm. 6.4]{Bauer:2013}, 
there exists a $\delta$-matching between $\{I\}$ and $\{I'\}$.\footnote{Here we have the convention that if $I$ is empty, then $\{I\}$ represents the empty multiset.}
Similarly, $u_r^{(2)}$ and $v_r^{(2)}$ define a $\delta$-interleaving between the quotients $(\Pi_f\otimes \F)/C(I)$
and $(\Pi_{f'}\otimes \F)/C(I')$ that are represented by the multiset of intervals complementary to $I$ and $I'$ respectively, and they induce
a $\delta$-matching between these complementary barcodes. The disjoint union of these $\delta$-matchings gives an upper bound
on the bottleneck distance between the pointed barcodes.
\end{proof}

\section{Proof of Theorem B}
\heading{Star, link and subdivision of simplicial complexes.}
%\marek{Star a link uz podle me nepotrebujem} \ins{zda se mi, ze se porad na nekolika mistech vyskytuje star, starring, open star, derived neighborhood,
%simplicial approximation apod..}
Let $A\subseteq X$ be simplicial complexes. We define the $star(A,X)$ to be the set of all faces of all simplices in $X$ that have
nontrivial intersection with $|A|$, and $link(A,X):=\{\sigma\in star(A,X)\,|\,\sigma\cap |A|=\emptyset\}$. Both $star(A,X)$ and $link(A,X)$ are simplicial complexes.
The difference $star^\circ(A,X):=|star(A,X)|\setminus |link(A,X)|$ is called the \emph{open star}.
A~simplicial complex $X'$ is called a \emph{subdivision} of $X$ whenever $|X'|=|X|$
and each $\Delta'\in X'$ is contained in some $\Delta\in X$.
If $a\in |X|$, than we may construct a subdivision of $X$ by replacing the unique $\Delta$ containing $a$ in its interior
by the set of simplices $\{a,v_1,\ldots, v_k\}$
for all $\{v_1,\ldots, v_k\}$ that span a face of $\Delta$, and correspondingly subdividing each simplex containing $\Delta$.  This process is called \emph{starring} $\Delta$ at $a$.
If we fix a point $a_\Delta$ in the interior of each $\Delta\in X$, we may construct a \emph{derived subdivision} $X'$ by starring each
$\Delta$ at $a_\Delta$, in an order of decreasing dimensions.

\heading{Computability of cohomotopy groups.}
The crucial external ingredient for the proof is the polynomial-time
algorithm for computing cohomotopy groups  \cite[Theorem 1.1]{post},
see \cite[Theorem 3.1.2]{Krcal-thesis} for the running time analysis.
\begin{proposition}[{\cite[Theorem 3.1.2]{Krcal-thesis}}]   
\label{p:compute2}
For every fixed $k\ge 2$, there is a polynomial-time algorithm
that, \begin{enumerate}
\item given a~finite simplicial complex (or simplicial
set) $X$ of dimension $k$ and a $d$-connected $S$, where $k\le
2d$, computes the isomorphism type of $[X,S]$ as a finitely generated
Abelian group.%
%\footnote{We recall that every finitely generated Abelian group
%$\pi$ can be represented as a direct sum $\pi\cong\Z^r\oplus
%(\Z/m_1)\oplus (\Z/m_2)\oplus\cdots\oplus (\Z/m_s)$ of cyclic
%groups. Then the isomorphism type can be represented by the tuple
%\((r,m_1,m_2,\ldots,m_s)\) of nonnegative integers. Alternatively,
%we can specify a number $n$ such that there are  generators $g_1,\ldots,g_n$
%of \(\pi\) and the set of relations, each given by an integer
%combination as $b_1g_1+\ldots+b_ng_n=0$.}

\item When, in addition, a simplicial map $f\:X\to S$ is given, the
algorithm expresses $[f]$ as a linear combination of the generators.

\item Finally, when, in addition, a simplicial map $g\:X\to X'$ with $\dim X'\le 2d$ is given, 
the algorithm computes the induced homomorphism $$g^*\:[X',S]\to[X,S].$$
\end{enumerate}
\end{proposition}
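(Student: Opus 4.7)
The plan is to build an effective Postnikov tower for $S$ through stage $k$ and compute $[X,S]$ by induction on the stages. Because $S$ is $d$-connected and $\dim X \le 2d$, the natural map $[X,S] \to [X, P_k(S)]$ into the $k$-th Postnikov stage is a bijection, and in this metastable range $[X,S]$ carries a canonical abelian group structure. The tower is assembled from principal fibrations
\[
K(\pi_i(S), i) \to P_i(S) \to P_{i-1}(S)
\]
classified by $k$-invariants $\kappa_i \in H^{i+1}(P_{i-1}(S); \pi_i(S))$, which in principle reduces the computation of $[X,S]$ to iterated cohomology computations of $X$ with finitely generated abelian coefficients. Such cohomology groups are polynomial-time computable from the simplicial cochain complex of $X$ via Smith normal form.

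The first major step is to realize each stage $P_i(S)$, together with the $k$-invariant $\kappa_i$, as a simplicial set equipped with \emph{polynomial-time homology} in the sense of Sergeraert: a (generally infinite) chain complex given by an evaluation algorithm, together with an explicit reduction onto a finite model. Starting from the input $S$ one applies the constructive cellular model of Rubio and Sergeraert, extracts $\pi_d(S)$ via the effective Hurewicz theorem, and then iteratively kills the low homotopy groups, using the Eilenberg--MacLane models $K(\pi_i(S), i)$ built from presentations of the previously computed abelian groups. Since $k$ is fixed, only a bounded number of Postnikov stages need to be built. The second step is the inductive assembly of $[X,S]$: in the stable range, the principal fibration above yields the four-term exact sequence of abelian groups
\[
H^{i}(X; \pi_i(S)) \to [X, P_i(S)] \to [X, P_{i-1}(S)] \stackrel{\kappa_i^*}{\longrightarrow} H^{i+1}(X; \pi_i(S)),
\]
which presents $[X, P_i(S)]$ as an extension whose kernel and cokernel are both computable as explicit finitely presented abelian groups, using the inductively known presentation of $[X, P_{i-1}(S)]$ and elementary manipulations of Smith normal forms.

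For part (2), the same inductive scheme tracks a given $f$ through the tower: at each stage one extracts the obstruction cocycle representing the image of $[f]$ in $[X, P_i(S)]$ and records its class in terms of the chosen generators. For part (3), naturality of the Postnikov tower makes $g^*$ compatible with all the exact sequences at every stage, so the induced homomorphism $g^*\:[X',S]\to[X,S]$ is obtained by combining the cohomology pullbacks $g^*\:H^*(X';\pi_i(S)) \to H^*(X;\pi_i(S))$ with the inductively maintained presentations of $[X',P_i(S)]$ and $[X,P_i(S)]$; this reduces to polynomial-time linear algebra over finitely generated abelian groups. The hardest part, and the main content of the cited result, is establishing that the effective Postnikov tower together with its $k$-invariants can be constructed and evaluated with only polynomial blow-up in size across the bounded number of stages; this is the technical core of Sergeraert's effective homotopy theory as refined in~\cite{Krcal-thesis}.
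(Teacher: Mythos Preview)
The paper does not prove this proposition: it is quoted as an external result from \cite[Theorem 3.1.2]{Krcal-thesis}, and the only argument the paper supplies is a one-sentence justification of item~(3), which is not explicitly stated in the cited source. Your sketch of the Postnikov-tower/effective-homology machinery is a faithful outline of what the cited reference actually does for items~(1) and~(2), so in that sense your proposal is correct and matches the underlying argument that the paper is importing.

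For item~(3), however, the paper's remark is considerably simpler than your approach. You propose to compute $g^*$ by tracking the naturality of the Postnikov tower and combining the cohomology pullbacks $g^*\:H^*(X';\pi_i(S))\to H^*(X;\pi_i(S))$ with the inductive presentations at each stage. The paper instead observes that once items~(1) and~(2) are available, item~(3) is essentially free: take the explicit simplicial representatives of the generators of $[X',S]$ produced by item~(1), precompose each of them with $g$ to obtain simplicial maps $X\to S$, and then apply item~(2) to express the resulting classes in terms of the generators of $[X,S]$. This bypasses any need to reopen the Postnikov machinery and reduces item~(3) to a direct application of the first two items.
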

The item 3 above is not explicitly stated in \cite[Theorem 3.1.2]{Krcal-thesis} but the computation simply amounts to composing the simplicial map $g$ with the representatives of the generators computed by item 1  (see \cite[Theorem 3.6.1]{Krcal-thesis} for the details on the representation) and applying item 2 on the composition. See also \cite[Proof of Theorem 1.4]{polypost} for an explicit computation of an induced homomorphism.

We will split the proof of Theorem B in two parts: first we show the polynomial computability
of the cohomotopy persistence module and then the polynomial complexity of the computation of the pointed barcode associated to $\Pi_f\otimes \F$.
In the  analysis of the running time, $n$ is supposed to be fixed and the polynomiality is with respect to the size of the input that defines the 
simplicial complex $X$ and the simplexwise linear function $f$. The simplicial complex is encoded by listing all its simplices so the size of the input size is at least the number of simplices $|X|$. We do not present an estimate of how the complexity depends on $n$.

\begin{proof}[Proof of Thm. B, part (1): Computability of the cohomotopy persistence module.]
First we focus on the computation of each particular
$\pi:=\pi_r$ and $[f_{/A}]:=[f_{/A_r}]$ for any fixed $r>0$.   
We need the following  segment of the long exact sequence of cohomotopy groups
\cite[Chapter VII]{HuBook}
\begin{equation}\label{e:ex}
%[X,S^{n-1}]\stackrel{i^*}{\to}
[A,S^{n-1}]\stackrel{\delta}{\to}
[X/A,S^{n}]\stackrel{j^*}{\to}
[X,S^n].\end{equation}
The desired $\pi=\im\delta$ can be computed in various ways:
we will use the exactness at $[X/A,S^n]$, that is, $\im\delta = \ker j^*$.
%For readability, we split the algorithm into the following steps. 

The outline  of the algorithm is as follows:
\begin{enumerate}
\item Discretize the pair $(X,A)$ by a homotopy equivalent
pair of simplicial complexes $(X',A')$. 
%This and the following step is essentially \cite[Proof of  Theorem~1.2 on page 13]{nondec}.
\item Using simplicial approximation theorem, discretize the map $\bar f\:A\to S^{n-1}$ by a simplicial
map $\bar f'\:A'\to \Sigma^{n-1}$ where $\Sigma^{n-1}$ is the boundary of the $n$-dimensional
cross polytope.

\item Construct a discretization $f'\:X'\to \Cone \Sigma^{n-1}$
of $f$ as an extension  
$$\bar f'\:A'\to \Sigma^{n-1}\subseteq \Cone \Sigma^{n-1}$$
by sending each vertex in $X'\setminus A'$
to the apex of the cone. Use the simplicial quotient operation on  $f'\:(X', A')\to (\Cone\Sigma^{n-1},\Sigma^{n-1})$
to get the discretization 
$$f'_{/A}\:X'/A'\to\Cone \Sigma^{n-1}/\Sigma^{n-1}=: \Sigma^n$$ 
of $f_{/A}\:X/A\to S^n$.
\item Apply Proposition~\ref{p:compute2} (1) to get $[X'/A',\Sigma^n]$
and $[X',\Sigma^n]$, Proposition~\ref{p:compute2} (2) to get $[f'_{/A}]\in[X'/A',\Sigma^n]$
and Proposition~\ref{p:compute2} (3) to obtain the induced homomorphism
$j'^*\:[X'/A',\Sigma^n]\to[X',\Sigma^n]$.
\item Compute the kernel of 
$ j^*$ and express the element $[f'_{/A}]$ in terms of the generators
of $\ker j^*$ (\cite[Lemma 3.5.2]{Krcal-thesis}).
\end{enumerate}    

The details follow.
\heading{Step 1.}
First we need to ``discretize'' the pair $(X,A)$ by a homotopy equivalent
pair of simplicial complexes $(X',A')$. %This and the following step is essentially \cite[Proof of  Theorem~1.2 on page 13]{nondec}.

As in~\cite[Proof of Theorem 1.2]{nondec} we compute a
subdivision $X'$ of $X$ such that for each simplex $\Delta\in
X'$ we have that $\min_{x\in\Delta}|f(x)|$ is attained in a vertex
of $\Delta$. This can be done by starring each $\Delta$ in $\argmin_
{x\in\Delta}|f(x)|$ whenever it belongs to the interior of $\Delta$. The polynomial-time computability (when $m$ is fixed)  of $\argmin_
{x\in\Delta}|f(x)|$ is our
only requirement on the norm $|\cdot|$ in $\R^n$; it is satisfied
for  all norms $\ell_1,\ell_\infty$ (via a~linear program with a~fixed number of variables and inequalities) 
and $\ell_2$ (Lagrange multipliers). We will refer
to the values $\min_{x\in\Delta}|f(x)|$ for $\Delta\in X$
as \emph{critical values} of $f$. Moreover, for the next step we will require that for each component $f_i$ of $f$ 
the preimage $f_i^{-1}(0)$ intersects each edge of $X'$ in a vertex (or not at all). 
Thus for each $i=1,\ldots,n$ we  do starring in an arbitrary order of  each edge $ab$ in $f_i^{-1}(0)\cap ab$ 
whenever the intersection consist of a single interior point of the edge. 
Note that this does not destroy the property that the minimum of $|f|$ on each simplex is attained in a vertex.
In the end, the number of starring is bounded by a constant multiple of the number of simplices of $X$. 

We define the discretization $A'\subseteq X'$ of $A\subseteq X$ as in the following lemma.
\begin{lemma}\label{l:discret}
Let $A'$ be the simplicial subcomplex of $X'$ that is spanned
by the vertices $x$ of $X'$ such that $|f(x)|\ge r$. Then $A'$
is a strong deformation retract of $A=A_r:=\{x\:|f(x)|\ge  r\}$. 
\end{lemma}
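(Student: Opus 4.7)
The plan is to build the strong deformation retraction simplex by simplex and then glue. Two observations drive everything. First, since $f$ is affine on each $\Delta\in X'$ and $|\cdot|$ is a norm, the triangle inequality yields $|f(x)|\le \sum_i \alpha_i|f(v_i)|$ for $x=\sum_i \alpha_i v_i\in\Delta$; in particular, if no vertex of $\Delta$ satisfies $|f(v)|\ge r$, then $\Delta\cap A=\emptyset$. Second, by the subdivision hypothesis $\min_{x\in\Delta}|f(x)|$ is attained at a vertex, so if every vertex of $\Delta$ satisfies $|f(v)|\ge r$, then $\Delta\subseteq A$. Together: $\Delta\subseteq A$ iff every vertex of $\Delta$ lies in $A'$; in particular $|A'|\subseteq A$.

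Fix a simplex $\Delta\in X'$ and decompose it as a join $\Delta=\sigma*\tau$, where $\sigma$ is the face spanned by vertices with $|f(v)|\ge r$ and $\tau$ the opposite face; by the first observation, $A\cap\tau=\emptyset$. Every point $x\in\Delta\setminus\tau$ has a unique representation $x=(1-t)s+tu$ with $s\in\sigma$, $u\in\tau$, $t\in[0,1)$, and I would define
\[
H(x,\lambda) := \bigl(1-(1-\lambda)t\bigr)s + (1-\lambda)t\,u.
\]
The function $\phi(t'):=|(1-t')f(s)+t'f(u)|$ is convex in $t'$ (a norm composed with an affine map), with $\phi(0)=|f(s)|\ge r$ and $\phi(1)=|f(u)|<r$; the sublevel set $\{\phi<r\}$ is therefore an interval in $[0,1]$ containing $1$ but not $0$, so $\{\phi\ge r\}=[0,t^*(s,u)]$ for some $t^*\in[0,1)$. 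For $x\in A\cap\Delta$ we have $t\le t^*$, hence $(1-\lambda)t\le t^*$ for all $\lambda$, so $H(x,\lambda)\in A$. Clearly $H(\cdot,0)=\id$, $H(\cdot,1)$ maps $x$ to $s\in\sigma\subseteq|A'|$, and points of $\sigma$ (those with $t=0$) are fixed throughout.

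To glue, I would rewrite $H$ purely in barycentric coordinates: setting $\alpha:=\sum_{v_i\in\sigma}\alpha_i$, the new coordinates are $\alpha_i'=\alpha_i(\alpha+\lambda(1-\alpha))/\alpha$ for $v_i\in\sigma$ and $\alpha_j'=(1-\lambda)\alpha_j$ for $v_j\in\tau$. On $A$ we have $\alpha>0$ (again by the first observation), so this formula is continuous on $A$, and on a common face of two adjacent simplices it only involves the coordinates indexed by vertices of that face (the rest vanish), so the formulas agree. The local retractions therefore assemble into a globally continuous strong deformation retraction $H\:A\times[0,1]\to A$ of $A$ onto $|A'|$.

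The main obstacle I expect is the middle step---verifying that the retraction stays inside $A$---which depends crucially on the convexity of $\phi$ and the resulting shape of $\{\phi\ge r\}$ as a single interval $[0,t^*]$. Once that confinement is in place, the barycentric bookkeeping needed for continuity and for agreement on shared faces is routine.
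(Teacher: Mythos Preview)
Your argument is correct and is essentially the paper's own proof. Both construct the retraction simplexwise as the straight-line homotopy from $x$ to its radial projection onto the face $\sigma$ spanned by the vertices in $A'$, both use convexity (you via the convex function $\phi$ along the segment $[s,u]$, the paper via convexity of $\{|f|<r\}\cap\Delta$) to confine the homotopy to $A$, and both finish by checking compatibility on shared faces; you are just more explicit about the barycentric bookkeeping.
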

\begin{proof}
The strong deformation retraction (that is, a~map $H\:A\times[0,1]\to
A$ with $H(\cdot,0)=\id$, $\im H(\cdot,1)\subseteq A'$ and $H(\cdot,t)$\
being identity on $A'$) is constructed simplexwise. Namely for each
$\Delta\in X'$ it is the straightline homotopy between identity
on $\Delta\cap A$ and $p|_{\Delta\cap A}$ where $p$ is the projection
of $\Delta\cap star^\circ(\sigma)$ onto the maximal face $\sigma$ of $\Delta$ that is contained in $A'$. We claim that the image of $H$ is contained in $A$. The image of $H(a,\cdot)$ is certainly contained in the segment between $a$ and $p(a)$ which is a subsegment of a segment between $s$ and $p(a)$ where $s$ is a unique point on the face of $\Delta$ that is complementary to $\sigma$. Because $s$ is not contained in $A$ and because of the convexity of the complement of $A$ (that follows from the convexity of the norm), each point between $a$ and $p(a)$ has to be contained in $A$. 
Finally, it is routine to check that the definition of $H$ on
each $\Delta$ is compatible with its definition on every face
$\Delta' < \Delta$. 
\end{proof}

\heading{Step 2.} Next we ``discretize'' the map $\bar f\:A\to S^{n-1}$ by a simplicial
map $\bar f'\:A'\to \Sigma^{n-1}$ where $\Sigma^{n-1}$ is the boundary of the $n$-dimensional
cross polytope---a convenient discretization of $S^{n-1}$. By discretization we mean that we get commutativity up to homotopy in the diagram $$\xymatrix{
A\ar[r]^{\bar f}\ar[d]^{p}&S^{n-1}\ar[d]\\
A'\ar[r]^{\bar f'}&\Sigma^{n-1}}
$$
where the vertical map $p\:A\to A'$ is the homotopy equivalence from the previous step and $S^{n-1}\to \Sigma^{n-1}$ is the homeomorphism defined by $x\mapsto x/|x|_1$.

The construction of the simplicial approximation $f'\:A'\to \Sigma^{n-1}$ of $\bar f$ follows exactly the procedure from \cite[Proof of Theorem 1.2]{nondec}.
Due to the second subdividing step from above, for each vertex $v\in X'$, there is $i\in\{1,\ldots,n\}$ such that $f_i$ has a constant sign $s\in\{+,-\}$ on $star^\circ(v,A')$. We prescribe $f'(v):=s e_i$ where $s e_i$ is a vertex of $\Sigma^{n-1}$. By the simplicial approximation theorem ($f$ maps each  $star^\circ(v,A')$ into \(star^\circ(f'(v),\Sigma^{n-1})\)), the map $f'$ is homotopic to the map $A'\to \Sigma^{n-1}$ defined by $x\mapsto f(x)/|f(x)|_1$. By the deformation retraction above, $f'p$ is also homotopic to $A\to \Sigma^{n-1}$ defined again by $x\mapsto f(x)/|f(x)|_1$.

%\item Let us define the discretization $X'_{/A}$ of $X/A$ by $X'_{/A}:= \Cone (A'\hookrightarrow X')$. By defining  $\Sigma^n$ as the suspension $\Sigma \Sigma^{n-1}$  of $\Sigma^{n-1}$ we get a discretization of $S^n$ (it is again the boundary of a cross polytope) with two distinguished vertices, say the north pole and the south pole. Finally, let $f_{/A}'\:X_{/A}'\to \Sigma^n$ be the map that is equal to $\bar f'$ on $A'\subseteq X_{/A}'$ and that sends every vertex in $X'\setminus A'$ to the north pole and the apex of the cone to the south pole.

\heading{Step 3.} Next we construct a~simplicial approximation 
$f'\:X'\to \Cone \Sigma^{n-1}$ 
of $f$ as an extension of 
$$\bar f'\:A\to \Sigma^{n-1}\subseteq \Cone \Sigma^{n-1}$$ 
by sending each vertex in $X'\setminus A'$ to the apex of the cone. 
Further, we use the simplicial ``quotient operation'' on  
$$f'\:(X', A')\to (\Cone\Sigma^{n-1},\Sigma^{n-1})$$
to get the simplicial approximation 
$$f'_{/A'}\:X'/A'\to\Cone \Sigma^{n-1}/\Sigma^{n-1} \quad \text{ of } \quad f_{/A}\:X/A\to S^n.$$

The quotient operation, strictly speaking, exists for simplicial sets but not for simplicial complexes. However,
all simplicial complexes and maps can be canonically converted
into simplicial sets and maps of simplicial sets (\cite[Section
2.3]{polypost}) after fixing  arbitrary orderings of all the
vertices of each simplicial complex that are compatible with the given maps.  
First we choose an ordering of the vertices of  $\Cone\Sigma^{n-1}$ arbitrarily,
and then an ordering of the vertices of $X'$ such that $f'(u)<f'(v)$
implies $u<v$.
%\footnote{Alternatively, the quotient operation can be
%``emulated'' on the level of simplicial complexes by erecting
%cones over the respective subcomplexes. Concretely, $\Cone(A'\hookrightarrow
%X')$ is homotopy equivalent to (or, discretizes) $X'/A'$ and
%similarly $\Cone(\Sigma^{n-1}\hookrightarrow\Cone\Sigma^{n-1})$
%is homotopy equivalent to $\Cone\Sigma^{n-1}/\Sigma^{n-1}$. The
%discretization $\Cone(f')$ of $f_{/A}$ is equal to $f'$ on $\Cone\Sigma^{n-1}$
%and sends the remaining  apex to the apex. Both ways can be performed in polynomial time. }

By construction, $[f'_{/A'}]=\delta\big([\bar f']\big)$ and thus $[f'_{/A'}]$ is a~simplicial approximation of
$\delta [\bar{f}]$.

%since by the previous step $[\bar f']=p^*[\bar f]$, we have that $\delta\big([\bar f']\big)=p^*\delta\big([\bar f]\big)$. 

\heading{Step 4.} Apply Proposition~\ref{p:compute2} (1) to get 
$[X'/A',\Sigma^n]$ and $[X',\Sigma^n]$ where 
$$\Sigma^n=\Cone\Sigma^{n-1}/\Sigma^{n-1}.$$ 
Then apply Proposition~\ref{p:compute2} (2) to get $[f'_{/A}]\in[X'/A',\Sigma^n]$.
The simplicial quotient map $j'\:X'\to X'/A'$ is a discretization of $j\:X\to X/A$ and
we use Proposition~\ref{p:compute2} (3) to obtain the induced homomorphism 
$$j'^*\:[X'/A',\Sigma^n]\to[X',\Sigma^n].$$
The polynomial running time of this step amounts to Proposition~\ref{p:compute2}.

\heading{Step 5.} Finally, we compute $\pi$ as the kernel of 
$ j^*$ and express the element $[f'_{/A}]$ in terms of the generators of $\ker j^*$.
The correctness and polynomial running time of this step
amounts to \cite[Lemma 3.5.2]{Krcal-thesis}.

Further, assume that $r$ is not fixed.

\begin{lemma}
\label{l:critical}
If an interval $[r,s)\subseteq \R^+$ is disjoint from $\{\min_{x\in\Delta} |f(x)|: \Delta\in X\}$, then
$\varphi_{s,r}: (\pi_r, [f_{/A_r}])\to (\pi_s, [f_{/A_s}])$ is an isomorphism.
\end{lemma}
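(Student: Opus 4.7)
The plan is to show that the hypothesis ``$[r,s)$ contains no critical value'' forces the inclusion $A_s\hookrightarrow A_r$ to be a homotopy equivalence, and then to conclude by naturality of the exact cohomotopy sequence~\eqref{e:les1}.

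First I would use the subdivision $X'$ from Step~1: on $X'$ the minimum $\min_{x\in\Delta}|f(x)|$ is always attained at a vertex of $\Delta$, so the set of critical values of $f$ coincides with $\{|f(v)|:v\text{ is a vertex of }X'\}$. The hypothesis therefore means that every vertex $v$ satisfies either $|f(v)|<r$ or $|f(v)|\ge s$, whence the simplicial complexes $A_r'$ and $A_s'$ from Lemma~\ref{l:discret} are identical subcomplexes of $X'$.

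Next I would verify that the straight-line homotopy $H$ constructed on $A_r$ in the proof of Lemma~\ref{l:discret} restricts to a strong deformation retraction of $A_s$ onto $A_s'=A_r'$. The nontrivial point is that $H(A_s\times[0,1])\subseteq A_s$. Pick a simplex $\Delta$ of $X'$ that meets $A_s$ but is not contained in $A_r'$; by the first paragraph its face complementary to $\sigma:=\Delta\cap A_r'$ is spanned by vertices $v$ with $|f(v)|<r$, so the triangle inequality gives $|f(q)|<r<s$ for every point $q$ of that complementary face. Thus such a $q$ lies outside $A_s$, and the convexity of $\{x:|f(x)|<s\}$ allows one to repeat verbatim the argument of Lemma~\ref{l:discret} to conclude that the whole segment from $a\in A_s\cap\Delta$ to its projection $p(a)\in\sigma$ lies in $A_s$. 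Therefore both $A_s$ and $A_r$ strong-deformation-retract onto $A_s'=A_r'$ through compatible retractions, and the inclusion $A_s\hookrightarrow A_r$ is a homotopy equivalence.

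Finally I would appeal to naturality of~\eqref{e:les1}. The induced square
$$
\xymatrix{
[A_r,S^{n-1}]\ar[r]^{\delta_r}\ar[d] & [X/A_r,S^n]\ar[d]^{\varphi_{s,r}}\\
[A_s,S^{n-1}]\ar[r]^{\delta_s} & [X/A_s,S^n]
}
$$
commutes, and the left vertical is a bijection induced by the homotopy equivalence $A_s\hookrightarrow A_r$, so a direct diagram chase yields $\varphi_{s,r}(\gr_r)=\gr_s$. Because $(X,A_r)$ and $(X,A_s)$ are cofibred simplicial pairs and the inclusion between them is a homotopy equivalence, the quotient map $X/A_s\to X/A_r$ is itself a homotopy equivalence, so $\varphi_{s,r}$ is an isomorphism on all of $[X/A_r,S^n]$ and in particular on the subgroup $\gr_r$. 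That $\varphi_{s,r}[f_{/A_r}]=[f_{/A_s}]$ follows from the commutative diagram of pairs $(X,A_s)\hookrightarrow(X,A_r)$ composed with $f$. The main technical obstacle is the convexity verification in the second paragraph; everything afterwards is standard naturality together with a cofibration argument.
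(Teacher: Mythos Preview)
Your argument is correct and follows the same route as the paper: identify $A_r'=A_s'$, invoke Lemma~\ref{l:discret}, and finish by naturality. Two small remarks. First, your second paragraph re-proves more than you need: the paper simply applies Lemma~\ref{l:discret} \emph{twice}, once for $r$ and once for $s$, obtaining two (possibly different) strong deformation retractions onto the common subcomplex $A'$; there is no need to check that the retraction for $A_r$ restricts to one for $A_s$. The paper then factors everything through the genuinely simplicial pair $(X,A')$, which avoids your claim that ``$(X,A_r)$ and $(X,A_s)$ are cofibred simplicial pairs''---note that $A_r$ and $A_s$ are not subcomplexes, so that phrasing is inaccurate, though the conclusion you draw from it is fine once you route through $(X,A')$ instead. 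Second, with that factoring the isomorphism comes immediately from $i\circ i_s=i_r$ (where $i_r,i_s$ are the inclusions of $(X,A')$ into $(X,A_r),(X,A_s)$), so the separate diagram chase and cofibration argument in your last paragraph are also avoidable.
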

\begin{proof}
Let $[r,s)$ be disjoint from $\{\min_{x\in\Delta} |f(x)|: \Delta\in X\}$. Then for each vertex $v\in X'$ we have that 
$|f(v)|\geq r$ iff $|f(v)|\geq s$ and so both $A_r$ and $A_s$ deformation retract to $A'$ by Lemma~\ref{l:discret}. 
Thus the inclusions 
$$i_r\:(X,A')\hookrightarrow (X,A_r)\quad\text{ and }\quad i_s\:(X,A')\hookrightarrow  (X,A_s)$$ 
induce isomorphisms of the
pointed cohomotopy groups 
$$(\pi_r, [f_{/A_r}])\to (\pi', [f_{/A'}]) \quad \text{ and } \quad  (\pi_s, [f_{/A_s}])\to (\pi', [f_{/A'}])$$ 
where $\pi'$ is the corresponding subgroup of $[X/A', S^n]$. 
The inclusion 
$$i: (X,A_s)\hookrightarrow (X,A_r)$$
satisfies $i_r\circ i=i_s$ which immediately implies the isomorphism.
\end{proof}
As follows from Lemma~\ref{l:critical}, the homotopy type of $A_r$
can only
change when $r$ passes through one of the critical values $s_1,\ldots,s_k$
of $f$. Therefore, we only have to compute groups $\gr_{r_0},\ldots,\gr_{r_k}$
for arbitrary values $r_0,\ldots,r_k$ such that  $r_0<s_1<r_1<\ldots<r_{k-1}<s_k<r_k$.
The number of critical values is bounded by the number of simplices
of $X$ therefore this can be done in polynomially many repetitions of the above algorithm.

The remaining step is to compute the homomorphisms $\varphi_{i}=
\varphi_{r_{i+1},r_{i}}$ induced by the quotient maps $X/A_{r_{i+1}}\to
X/A_{r_i}$  for \(i=0,\ldots,k\). This is another application of Proposition~\ref{p:compute2} (3) on the discretization
$X'/A'_{r_{i+1}}\to X'/A'_{r_i}$ of the above quotient map.

\end{proof}

\begin{proof}[Proof of Theorem B, part (2): Computation of the pointed barcode.]
Assume that the isomorphism type of $\Pi_f$ has been computed and is represented as a sequence of pointed Abelian groups
$\pi_{r_i}\stackrel{\varphi}{\to} \pi_{r_{i+1}}$ and an initial element $[f_{/A}]\in \pi_{r_0}$. Tensoring the cohomotopy persistence
module with $\Q$ converts each $\Z$-summand of the Abelian groups into a $\Q$-summand and kills the torsion, while tensoring with 
a~finite field $F$ of characteristic $p$ converts each $\Z$-summand and each $\Z_{p^k}$-summand into an $F$-summand
and kills all $Z_{(p')^{k'}}$ for $p'\neq p$. The induced $\F$-linear maps 
$$
\varphi_{r_{i+1}, r_i}^\F: \pi_{r_i}\otimes \F\to \pi_{r_{i+1}}\otimes \F
$$ 
can easily be represented via matrices,
if the action of $\varphi_{r_{i+1}, r_i}$ on generators has been precomputed. The number of critical values $s_1,\ldots, s_k$
is bounded by the size of the input data defining the simplicial complex $X$. 
Each interval in the pointed barcode representation is either $(s_i, s_j]$ or $(0, s_j]$
for some $1\leq i,j\leq k$ and the number of such pairs is bounded by $k^2$. 
Finally, the multiplicity of the interval spanned between $s_i$ and $s_j$ can be computed from a simple rank formula 
$$
\mathrm{rank}\, \varphi_{r_{i-1}, r_j}^\F   - \mathrm{rank}\, \varphi_{r_{i}, r_j}^\F +
\mathrm{rank}\, \varphi_{r_{i}, r_{j-1}}^\F - \mathrm{rank}\, \varphi_{r_{i}, r_{j-1}}^\F
$$
and similarly for the pairs $0,s_j$. Each rank computation has polynomial complexity with respect to the dimension of the matrices.
These dimensions are bounded by the ranks of $\pi_r$, which in turn depend polynomially on the number of simplices in $X$~\cite[Theorem 3.1.2 and  Chapter 1.1.2]{Krcal-thesis}. 
The distinguished barcode is empty iff $[f_{/A_{r_0}}]\otimes 1$ is trivial, and otherwise spanned between $0$ 
and the minimal $s_i$ such that $[f_{/A_{r_i}}]\otimes 1$ is trivial.
\end{proof}

\section{Proof of Theorem C}
\label{a:cobordism2perturbation}
We start with some definitions and simple statements from the field of differential topology. The domain $X$ in Theorem C is 
assumed to be a~smooth manifold, possible with non-empty boundary: in that case, $X\times [0,1]$ is a manifold with corners.
\label{s:preliminaries}
\heading{Manifolds with corners.} A smooth $n$-manifold with corners is a second-countable Hausdorff space $M$ with
an atlas consisting of charts $\varphi_a: U_a\to [0,\infty)^k\times\R^{n-k}$, where $U_a\subseteq M$ are open,
$\{U_a\}_a$ is a covering of $M$ and the transition maps $\varphi_a\circ\varphi_b^{-1}$ are smooth.
Common notion of smooth maps, tangent spaces and diffeomorphism easily generalize to manifolds with corners, see~\cite{Joyce:2012} 
for a detailed exposition. For each $x\in M$, the \emph{depth} of $x$ is equal to $l$ s.t. for some chart $\varphi_a$ the image
$\varphi_a(x)\in [0,\infty)^k\times \R^{m-k}$ has exactly $l$ coordinates among the first $k$ coordinates equal to zero:
this is independent on the choice of the chart. If the depth is at most $1$ for
all $x$, then this reduces to the common notion of a smooth manifold with boundary. 
We will use the notation 
$$\partial_l M:=\{x\in M\,|\,\mathrm{depth}(x)=l\}.$$
Its closure $\overline{\partial_l M} = \cup_{j\geq l}\,\partial_j M$ is naturally an $n-l$ dimensional manifold with corners. 

The category of manifolds with corners is closed with respect to products.
In this work, we will only consider (sub)manifolds with corners of depth at most 2:
they naturally arise as ``regular'' preimages of submanifolds with boundary in
a manifold with boundary. One example is the case of $f^{-1}[0,\infty)$ for smooth $f: M\to \R$ such that both $f$ and 
$f|_{\partial M}$ are transverse to $0$.

A manifold will refer to a smooth manifold with (possibly non-empty) boundary.

%On function spaces between manifolds, we will mostly consider the Whitney (or strong) $C^r$-topology generated, roughly speaking, 
%by neighborhoods where functions have close function values and close first $r$ derivatives for $r\in [1,\infty]$. For $M$ compact,
%it coincides with the weak $C^r$-topology, see \cite[p. 35]{Hirsch:76}.
%
\heading{Submanifolds.} If $M$ is a smooth $m$-manifold (or $m$-manifold with corners), then a submanifold $N$ will refer
to a smooth \emph{embedded submanifold} (with corners).
%That is, $N\hookrightarrow M$ is a homeomorphism onto its image
%and for each $x\in N$ there is an $N$-chart $\phi_x^N: V_x\subseteq N\to \R^{k}$ and an $M$-chart $\psi_x^M: U_x\subseteq M\to\R^m$ such that
%$\psi_x^M|_{U_x\cap V_x}=\varphi_x^N|_{U_x\cap V_x}\times \{0\}$.
%
If $M$ is a smooth manifold, then a \emph{neat submanifold} is an embedded $k$-dimensional submanifold $N$ 
such that $\partial N=N\cap\partial M$ and for each $x\in \partial N$ there exists an $M$-chart $\psi: U_x\to[0,\infty)\times\R^{m-1}$ 
such that $\psi^{-1}([0,\infty)\times\R^{k-1}\times\{0\}^{m-k})=N\cap U_x$.
We will extend this definition to manifolds with corners.
\begin{definition}
Let $M$ be an $m$-manifold with corners. A neat $k$-submanifold $N$ with corners is a smooth embedded submanifold such that
$\partial_j N=N\cap\partial_j M$ for each $j$, and for each $x\in\partial_j N$ there exists an $M$-chart 
$\psi: U_x\to [0,\infty)^j\times\R^{m-j}$ such that $\psi^{-1}([0,\infty)^j\times\R^{k-j}\times\{0\})=N\cap U_x$.
\end{definition}
%If $M$ is a manifold with corers, we will say that $N$ is a neat submanifold of $M$ if $\partial N\subseteq\partial_1 M$ and 
%each ``$\partial M$'' is replaced by ``$\partial_1 M$'' in the previous definition. 
%Less formally, a neat submanifold behaves nice up to the boundary of $M$ and avoids corners.

If $N$ is a submanifold of $M$, then its boundary $\partial N$ does not need to be equal to the topological boundary of $N\subseteq M$.
We will use the notation $\manbound N$ for the manifold-boundary and $\partial N$ for the topological boundary of $N$ in $M$
wherever some ambiguity will be possible.
%$N\mod \partial M:=\overline{N\setminus (\partial M)}$; under this notation, $\partial N\mod \partial M$ equals 
%the topological boundary of $N$ in $M$. If $N$ is a neat submanifold of $M$, then its topological boundary $\partial N\mod\partial M$ is empty.
\heading{Transversality.}
The transversality theorem says that, roughly speaking, for smooth maps $M\to N$ and submanifolds $A\subseteq N$, transversality to $A$
is a generic property. If $M$ is compact and $A\subseteq N$ is closed in $N$, 
then the subspace of all smooth maps $M\to N$ transverse to $A$ is both dense and open 
(see \cite[Theorem 2.1]{Hirsch:76} for the case of boundary-free manifolds). 
If $f:M\to N$ is a smooth map between manifolds, $n\in N\setminus\partial N$  and both $f$ and $f|_{\partial M}$ are transverse to $\{n\}$
(equivalently, $n$ is a regular value of both maps), then $f^{-1}(n)$ is a neat submanifold of $M$~\cite[p. 27]{Kosinski:2007}.
Similarly, if $N'$ is a neat submanifold of $N$ with corners and $f|_{\overline{\partial_j M}}$ is transverse to $N'$ for each $j$, 
then $f^{-1}(N')$ is a neat submanifold of $M$ with corners. 

\heading{Framed submanifolds.} Assume that $X$ is a smooth oriented $m$-manifold. 
Let $S\subseteq X$ be a smooth $(m-n)$-submanifold of $X$. A framing of $S$ is
a trivialization $T$ of the $n$-dimensional quotient bundle $(TX)|_S/TS$:
that is, $T(x):=(T_1(x),\ldots, T_{n}(x))$ is a basis of $T_x X/T_x S$ in each $x\in S$.
Any choice of a Riemannian metric on $X$ induces an isomorphism $T_x X/T_x S\simeq N_x X$, 
where $N_x S$ is the space of all vectors in $T_x X$ orthogonal to $T_x S$,
so a framing can be understood as a trivialization of the normal bundle.

Assume that $X$ is a smooth manifold with boundary, $f: X\to\R^n$ is smooth and that $0$ is a regular value of $f$.
Then $f^{-1}(0)$ is naturally a framed $(m-n)$-submanifold, $T_i(x)$ being the unique element of $T_x X/T_x f^{-1}(0)$
mapped by $df$ to the $i$th basis vector $e_i\in T_0\R^n$. We will denote these vectors by $f^*(e_i)$.
Such framing uniquely determines---and is uniquely determined by---the differential $df|_{f^{-1}(0)}$.
If $0$ is also a regular value of $f|_{\manbound X}$, then $f^{-1}(0)$ is a~{neat submanifold} of $X$ 
and $\partial f^{-1}(0)= f^{-1}(0)\cap \manbound X$ is an $m-n-1$ dimensional submanifold of $\manbound X$ 
with an $n$-framing induced by $f|_{\manbound X}$. 

Assume $S\subseteq X$ a neat framed submanifold.
For $x\in S\cap \manbound X$, we can naturally identify $T_x X/T_x S\simeq T_x (\manbound X)/T_x (\manbound S)$~\cite[p. 53]{Kosinski:2007},
so the framings on $\manbound S$ induced by $f$ and $f|_{\manbound X}$ are compatible.
Any Riemannian metric on $X$ in which $S$ intersects $\manbound X$ orthogonally ($N_x S\subseteq T(\manbound X)$ for each $x\in\partial S$) 
can be used to represent the framings of $S$ and $S\cap\manbound X$ as compatible normal vectors to $TS$, resp. 
$T(\manbound S)$. In particular, given such metric, if $0$ is a regular value of both $f$ and $f|_{\manbound X}$, 
then the geometric representation of the framing of $f^{-1}(0)$ induced by $f$ restricts on the boundary to the geometric representation 
of the framing induced by $f|_{\manbound X}$. 
\heading{Replacing $r$-perturbations by homotopy perturbations.}
In this paragraph we will derive a~smooth analogue of Lemma~\ref{l:perturb-ext}.
\begin{definition}
Let $X$ be a smooth manifold, $A\subseteq X$ closed and $f: (X,A)\to (\R^n, \R^n\setminus\{0\})$. 
A function $h: (X,A)\to (\R^n, \R^n\setminus\{0\})$
will be called a \indef{regular homotopy perturbation} of $f$, if $h$ is smooth, $h$ and $h|_{\manbound X}$ are transverse to $0$
and $f$ is homotopic to $h$ as maps $(X,A)\to (\R^n, \R^n\setminus\{0\})$.
\end{definition}
\begin{lemma}
\label{l:homotopy2pert_smooth}
Let $X$ be a smooth manifold, $A$ its closed subset and $f: X\to\R^n$ such that $A=\{x: \,\, |f(x)|\geq r\}$. Then 
$$
\ZrFr(f)=\{ (h^{-1}(0), h^*(e_i)) \,\,|\,\, h\,\,\text{is a regular homotopy perturbation of } \,f \}.
$$
\end{lemma}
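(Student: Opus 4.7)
The plan is to prove the two inclusions separately.

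For the direction $\ZrFr(f)\subseteq\{(h^{-1}(0),h^{*}(e_i)):h\text{ regular homotopy perturbation}\}$, I would observe that any regular $r$-perturbation $g$ is automatically a regular homotopy perturbation: it is nowhere zero on $A$ because $|g-f|<r$ and $|f|\ge r$ on $A$, and the straight-line homotopy $(1-t)f+tg$ never leaves the open $r$-neighborhood of $f$, so it stays nonzero on $A\times[0,1]$ and gives the required homotopy $g\sim f$ as maps $(X,A)\to(\R^n,\R^n\setminus\{0\})$. The framed zero set is then the same in both descriptions.

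For the reverse inclusion, I would begin with a regular homotopy perturbation $h$, set $Z:=h^{-1}(0)$, and aim to build a regular $r$-perturbation $g$ with $g^{-1}(0)=Z$ and $dg|_Z=dh|_Z$, so that the two framed zero sets coincide. The strategy would mimic the smooth version of Lemma~\ref{l:perturb-ext}. First, using the smooth homotopy extension property on the pair $A\subseteq Y:=\{|h|\ge\epsilon\}$ (replacing the norm by a smooth proxy such as $|h|^2$ if the chosen norm is non-smooth), I would extend the stationary homotopy of $h$ on $\{|h|=\epsilon\}$ together with the given homotopy $h|_A\sim f|_A$ through $\R^n\setminus\{0\}$, producing a smooth extension $e\:X\to\R^n$ of $f|_A$ that coincides with $h$ on the open neighborhood $V_0:=\{|h|<\epsilon\}$ of $Z$. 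After shrinking $V_0$, I would further arrange $\overline{V_0}\cap A=\emptyset$ and $|e-f|<r$ on $\overline{V_0}$, which is possible because $|e-f|=|f|<r$ on $Z$ itself.

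Second, I would construct a smooth $\chi\:X\to(0,1]$ with $\chi\equiv 1$ on $V_0\cup A$ and $\chi\le\epsilon'/(2\|e\|)$ on $W:=\{|f|\le r-\epsilon'\}\setminus V_0$ for a small $\epsilon'>0$; such $\chi$ exists via a partition of unity since $V_0\cup A$ and $W\setminus V_0$ are disjoint closed sets. Setting $g:=\chi\,e$, I obtain a smooth map with $g^{-1}(0)=Z$ (as $\chi>0$) and $g\equiv h$ on $V_0$, so $dg|_Z=dh|_Z$ and transversality of both $g$ and $g|_{\partial X}$ to $0$ is inherited from $h$. The principal obstacle will be the estimate $\|g-f\|<r$, checked case by case: trivially on $A$ (where $g=f$) and on $V_0$ (where $g=h$ and $|h-f|<r$); on $W\setminus V_0$ from $|g-f|\le|g|+|f|<\epsilon'/2+(r-\epsilon')<r$; and on the transition region (where $r-\epsilon'<|f|<r$ and $\chi<1$, which lies inside the neighborhood $U$ of $A$ where $|e-f|<r/2$) via the decomposition $g-f=\chi(e-f)+(\chi-1)f$, exactly as in the original lemma. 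The only extra care beyond Lemma~\ref{l:perturb-ext} is carrying out the construction smoothly, which is routine on a smooth compact manifold with boundary.
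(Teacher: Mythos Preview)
Your argument follows the paper's closely: both directions reduce to the pattern of Lemma~\ref{l:perturb-ext}, and the extra work for $\supseteq$ is arranging that the resulting $g$ agrees with $h$ on a neighborhood of $h^{-1}(0)$ so that smoothness, transversality and the framing are inherited.

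There is one technical slip worth fixing. You assert that ``$V_0\cup A$ and $W\setminus V_0$ are disjoint closed sets'', but $V_0=\{|h|<\epsilon\}$ is open, and its closure can meet $W=\{|f|\le r-\epsilon'\}\setminus V_0$ (at points with $|h|=\epsilon$ and $|f|\le r-\epsilon'$). Thus no continuous $\chi$ can be identically $1$ on all of $V_0$ and simultaneously $\le \epsilon'/(2\|e\|)$ on $W$. The easy repair is to pass to a smaller neighborhood $V_1$ with $\overline{V_1}\subset V_0$, require $\chi\equiv 1$ only on the closed set $\overline{V_1}\cup A$, and then on the annulus $V_0\setminus V_1$ verify $|g-f|<r$ by the same convex-combination estimate $|\chi e-f|\le \chi|e-f|+(1-\chi)|f|$; both $|e-f|$ and $|f|$ are strictly below $r$ on the compact $\overline{V_0}$ by your earlier arrangement, so the bound is strict. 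The paper sidesteps this by a two-step scaling: it first takes $\chi$ exactly as in Lemma~\ref{l:perturb-ext} (with no constraint near the zero set) to get $\tilde g=\chi e$, and only afterwards multiplies by a second smooth function equal to $1/\chi$ on $|e|^{-1}[0,\delta]$ and $1$ on $|e|^{-1}[2\delta,\infty)$, which for small $\delta$ restores $g=h$ near $h^{-1}(0)$ without disturbing $\|g-f\|<r$.
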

\begin{proof}
The inclusion $\subseteq$ follows from the fact that a regular $r$-perturbation $g$ of $f$ is straight-line homotopic to
$f: (X,A)\to (\R^n, \R^n\setminus\{0\})$.

The other inclusion will also be proved analogously to Lemma~\ref{l:perturb-ext}.
Choose an $\epsilon>0$ so that $\min_{x\in A} |h(x)|\geq 2\epsilon$ and $\epsilon$ is a regular value of 
$|h|$: then $Y:=|h|^{-1}[\epsilon,\infty)$ is a smooth manifold with corners in $\partial Y\cap\manbound X$. 
We have assume that $(f/|f|)|_A$ is homotopic to $(h/|h|)|_A$, so $f|_A$ and $h|_A$ are homotopic as maps to $\R^n\setminus\{0\}$.
The partial homotopy of $h$ on
$|h|^{-1}(\epsilon)\cup A$ that is stationary on $|h|^{-1}(\epsilon)$ and equal to the given homotopy $h|_A\sim f|_A$
on $A$ can be extended to a homotopy $H: Y\times[0,1]\to\R^n\setminus\{0\}$ by the homotopy extension property, 
so that $H(\cdot, 0)=h|_Y$ and $H(\cdot, 1)$ coincides with $f$ on $A$. Without loss of generality, we may assume that
$H$ is smooth (compare~\cite[Col. III 2.6]{Kosinski:2007}).

We define a map $e: X\to\R^n$
that equals $H(\cdot, 1)$ on $Y$ and $h$ on $|h|^{-1}[0,\epsilon]$. 
This map $e$ is an extension of $f|_A$ and equals $h$ in some neighborhood of $h^{-1}(0)$. It is smooth everywhere except possibly on
$|h|^{-1}(\epsilon)$: if $e$ is not smooth, we may slightly perturb it in a neighborhood of $|h|^{-1}(\epsilon)$ without changing 
its values in a neighborhood of $0$ or on $A$: assume further that $e$ is smooth.
As we have seen in the proof of Lemma~\ref{l:perturb-ext}, some positive scalar multiple
$\chi(x)\,e(x)=:\tilde{g}(x)$ satisfies $\|\tilde{g}-f\|<r$. The map $\chi$ can be chosen to be smooth. 
Multiplying $\tilde{g}(x)$ by a positive smooth
$(0,1]$-valued function that equals $1/\chi$ in $e^{-1}[0,\delta]$ and $1$ in $e^{-1}[2\delta, \infty)$, 
we obtain a map $g$ that still satisfies $\|g-f\|<r$, if $\delta$ is small enough.
This map $g$ coincides with $h$ in a neighborhood of $h^{-1}(0)$, hence both $g$ and $g|_{\manbound X}$ are transverse to $0$ 
and $g$ induces the same framing of the zero set as $h$. 
\end{proof}
For the rest of the proof of Theorem C, we will use the characterization of $\ZrFr(f)$ from Lemma~\ref{l:homotopy2pert_smooth}.
Namely, $\ZrFr(f)$ will refer to framed zero sets of regular homotopy perturbation of $f$.

\heading{Product neighborhoods.} If $X$ is a manifold with boundary and $C$ is a neat framed submanifold of $X$
of codimension $n$, then there exists a neighborhood $\mathcal{N}$ of $C$ diffeomorphic to $C\times\R^n$ 
(see~\cite[Theorem 4.2]{Kosinski:2007} for a slightly more general statement). 
Similarly, it holds that a framed neat submanifold with corners
of a manifold with corners has a neighborhood diffeomorphic to a product of the submanifold and $\R^n$. 
\begin{lemma}
\label{l:product_n}
Let $X$ be a manifold, possibly with corners, and $C$ be a neat framed submanifold of $X$ of codimension $n$. Then there exists
a neighborhood $\mathcal{N}$ of $C$ diffeomorphic to $C\times B^n$ where $B^n$ is the closed $n$-ball, and a function $F: \mathcal{N}\to\R^n$
so that $F$ and $F|_{\manbound X}$ are transverse to $0$ and $F^{-1}(0)$ equals $C$ with the induced framing.
\end{lemma}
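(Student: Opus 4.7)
The plan is to build $\mathcal{N}$ via the exponential map of a suitably adapted Riemannian metric, use the framing to identify the normal bundle with $C\times\R^n$, and let $F$ be the projection. The main point is that, since we allow corners, we need the tubular neighborhood to respect the stratification $\overline{\partial_j X}$ so that neatness passes through to give the transversality of $F|_{\manbound X}$.

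First I would choose a Riemannian metric $g$ on $X$ with the property that, for every $j$, the submanifold $C\cap\overline{\partial_j X}$ is perpendicular to $\overline{\partial_j X}$ inside $X$. Such a metric exists by a standard partition-of-unity argument: locally around a point $x\in\partial_j C$ pick an $X$-chart $\psi\: U_x\to [0,\infty)^j\times\R^{m-j}$ aligned with $C$ as in the definition of neatness, pull back the standard flat metric, and glue these with a partition of unity subordinate to a covering that refines the stratification. In such a local model both $\overline{\partial_j X}$ and $C$ are flat coordinate subspaces, so the orthogonality of $C$ to every stratum is preserved under convex combinations. With this $g$ in hand, replace the framing $(T_1,\dots,T_n)$ by its Gram--Schmidt orthonormalisation to obtain an orthonormal frame of the normal bundle $NC\subseteq TX|_C$ representing the same framing class.

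Next I would apply the exponential map $\exp\:NC\to X$. Since $C$ is neat and meets every $\overline{\partial_j X}$ orthogonally, geodesics starting on $\overline{\partial_j X}$ in directions normal to $C$ stay in $\overline{\partial_j X}$ until they leave the stratum, and thus $\exp$ maps a neighbourhood of the zero section of $NC$ onto an open neighbourhood $\mathcal{N}$ of $C$ in $X$, carrying the natural stratification of $NC$ (inherited from that of $C$) to the induced stratification of $\mathcal{N}$. By compactness of $C$ (or by shrinking fibrewise) we can take this neighbourhood to be of the form $\{v\in NC : |v|<\varepsilon\}$, which via the orthonormal frame is diffeomorphic to $C\times B^n$. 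Define
\[
F\:\mathcal{N}\longrightarrow \R^n,\qquad F(\exp_c(v)) := (\langle v,T_1(c)\rangle,\ldots,\langle v,T_n(c)\rangle).
\]
Under the identification $\mathcal{N}\cong C\times B^n$ this is just the projection onto $B^n\subset\R^n$, so $F^{-1}(0)=C$ and the differential $dF$ sends $T_i$ to $e_i$, recovering the original framing.

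Finally I would verify the transversality statements. Since $F$ is a submersion on all of $\mathcal{N}$ (its differential is surjective at every point, being the second projection in the product chart), it is in particular transverse to $\{0\}$. The same argument applied to each $\overline{\partial_j X}$ shows that $F|_{\overline{\partial_j X}\cap\mathcal{N}}$ is a submersion onto $\R^n$, because the stratification of $\mathcal{N}$ matches the product stratification $(C\cap\overline{\partial_j X})\times B^n$. Hence $F|_{\manbound X}$ is transverse to $0$ and $F^{-1}(0)\cap\manbound X=C\cap\manbound X=\partial C$ with the induced framing, as required. The step I expect to require the most care is the construction of the adapted metric and the verification that $\exp$ really does produce a product neighbourhood compatible with the corner stratification; once this is in place, the definition of $F$ and the transversality checks are essentially tautological.
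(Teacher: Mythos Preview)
Your overall strategy---build an adapted Riemannian metric, realise the normal bundle via the exponential map, and let $F$ be the fibre projection---is exactly the paper's approach, and your final paragraph (transversality via the product structure) is fine. The gap is in the construction of the metric.

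You arrange, by a partition-of-unity average of pulled-back flat metrics, that $C$ meets each stratum $\overline{\partial_j X}$ orthogonally, and then assert that ``geodesics starting on $\overline{\partial_j X}$ in directions normal to $C$ stay in $\overline{\partial_j X}$.'' Orthogonality of $C$ to a stratum only says that the initial velocity of such a geodesic is tangent to the stratum; for the geodesic to \emph{remain} in the stratum you need $\overline{\partial_j X}$ to be totally geodesic in $X$, which is a condition on the Christoffel symbols and hence on first derivatives of $g$. Each of your local flat metrics has this property (coordinate planes are totally geodesic in a flat metric), but total geodesicity is \emph{not} preserved under convex combinations: the Levi--Civita connection depends nonlinearly on $g$ through $g^{-1}$, so $\Gamma^k_{ij}(\sum_\alpha\rho_\alpha g_\alpha)\neq\sum_\alpha\rho_\alpha\Gamma^k_{ij}(g_\alpha)$ in general. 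Thus your glued metric need not have totally geodesic strata, and the exponential map need not respect the corner stratification.

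The paper fixes exactly this point by constructing the metric differently: rather than averaging, it builds the metric inductively on corner depth using collars. One first picks a metric on the deepest stratum $\partial_l X$, then uses an inward vector field (chosen tangent to $C$ along $\partial C$) to produce a collar $\partial_l X\times[0,1]$ on which one puts the \emph{product} metric, then extends, and iterates down to depth $0$. In a product metric each slice $\partial_l X\times\{0\}$ is totally geodesic by construction, so geodesics tangent to a stratum stay in it and the tubular neighbourhood produced by $\exp$ genuinely has the form $(C\cap\overline{\partial_j X})\times B^n$ on each stratum. If you replace your partition-of-unity step by this collar construction, the rest of your argument goes through unchanged.
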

The set $\mathcal{N}$ will be referred to as the \emph{product neighborhood} of $C$. 
We only sketch the proof, as it is an easy generalization of standard concepts.
\begin{proof}
In case of a manifold $X$ with no boundary, the diffeomorphism $S\times B^n\simeq \mathcal{N}$ is constructed via geodesic flow
of the geometric framing vectors (assuming the choice of a smooth metric). The function $F$ is then defined,
after identifying $\mathcal{N}$ with $C\times B^n$, as the projection $C\times B^n\to B^n$.

If $X$ has a boundary, then we need to choose the metric carefully so that the geodesic flow of the geometric framing vectors in
$\manbound X\cap C$ stays in $\manbound X$. This can be done as follows. First we choose an arbitrary smooth metric on $\manbound X$
and a vector field $v$ on $\manbound X$ so that $v$ points inwards to $X$ and $v|_{\manbound C}$ points inwards to $C$. 
We extend $v$ to a~smooth vector field nonzero on some neighborhood of $\manbound X$: the flow of $v$ defines 
a~collar neighborhood of $\manbound X$ diffeomorphic to $\manbound X\times [0,1]$. 
We extend the metric on $\manbound X$ to a product metric on this collar neighborhood: finally, we extend this arbitrarily 
to a smooth metric on all of $X$.  In this metric, $C$ intersects 
$\manbound X$ orthogonally and geodesics generated by tangent vectors in $\manbound X$ remain in $\manbound X$.
We use this metric to convert the $C$-framing to a~geometric framing. Its geodesic flow is well defined and
generates a~diffeomorphism 
from $C\times B^n(\epsilon)$ to a product neighborhood of $C$ for $\epsilon$ small enough.

For a general manifold with corners, we proceed analogously but need to define the metric via a longer chain of extensions.
First we define the metric on $\partial_l X$ for the maximal $l$ for which $\partial_l X$ is nontrivial.  
For each component of $\partial_l X$,
we extend the metric step by step to all components of $\partial_{l-1} X$ that meet at the given component of $\partial_l X$
as follows. First, the intersection $C\cap \overline{\partial_{l-1} X}$ should intersect $\partial_l X$ orthogonally. 
Next, we require that 
for each component $U_{ij}$ of $\partial_{l-1} X$ meeting a given component $V_j$ of $\partial_l X$, there exist some neighborhood of
$V_j$ in $\overline{U_{ij}}$ on which the metric is a product metric. Inductively, we construct the metric on all $\partial_j X$ for $j\leq l$.
Then the geometric framing vectors of $C$ in $x\in C\cap\partial_j X$ are in $T(\partial_j X)$ and there exists an $\epsilon>0$ so that
the geodesic flow of the framing vectors of $C\cap\partial_j X$ stays in $\partial_j X$ for $t\in [0,\epsilon]$. 
We define the product neighborhood via geodesic flow as before.
\end{proof}

\heading{Pontryagin-Thom construction.}
Let $X$ be a smooth $m$-manifold with boundary and $N_1$, $N_2$ be two neat framed $(m-n)$-submanifolds of $X$. We say that $C$ is a \emph{framed cobordism} between 
$N_1$ and $N_2$ if $C$ is a framed neat $(m-n+1)$-submanifold of $X\times [0,1]$ (with corners in $C\cap (\manbound X\times \{0,1\})$) 
such that 
\begin{itemize}
\item  $C\cap (X\times \{0\})=N_1\times \{0\}$, $C\cap (X\times \{1\})=N_2\times \{1\}$, and
\item The $N_1$ and $N_2$ framing coincides with the $C$ framing in $C\cap X\times \{0\}$ and $C\cap X\times \{1\}$, respectively. 
More precisely, if we identify $N_1\simeq N_1\times \{0\}$,
then the image of the $N_1$-framing vectors $T_i\in T(X\times \{0\})/TN_1$ in $T(X\times [0,1])/TC$ coincide
with the $C$-framing vectors, $i=1,\ldots, n$, and similarly for $N_2$.
\end{itemize}
Under a suitable choice of metric, in which $C$ intersects $X\times \{0,1\}$ orthogonally, the framing vectors of $C$, $N_1$, $N_2$ can be identified with normal vectors that coincide in
$(N_1\times \{0\}\cup N_2\times \{1\})\subseteq C$.
Being framed cobordant is an equivalence relation for neat framed $(m-n)$-submanifolds of $X$.

One simple version of the Pontryagin-Thom construction yields a $1-1$ correspondence between the cohomotopy set $[X,S^n]$ of a boundaryless
smooth compact manifold $X$, and the class of neat framed submanifolds of $X$ of codimension $n$, framed cobordant in $X$.
This correspondence assigns to $[\varphi]\in [X,S^n]$ the framed cobordism class of 
$(\varphi^{-1}(v), \varphi^*(e_i)_i)$ where $e_1,\ldots, e_n$ is a basis of $T_v S^n$ and $\varphi$ is 
a smooth representative of $[\varphi]$ transverse to $v\in S^n$. The cobordism class is independent of the choice of $\varphi$, $v$ and
the basis $(e_1,\ldots, e_n)$ of $T_v S^n$. 
A smooth homotopy $F: X\times[0,1]\to S^n$ transverse to $v$ between two representatives $\varphi$ and $\psi$ of $[\varphi]$ 
corresponds to the framed cobordism $F^{-1}(v)\subseteq X\times [0,1]$ between $\varphi^{-1}(v)$ and $\psi^{-1}(v)$ 
with all framings induced by the corresponding maps. Further, any neat framed submanifold of codimension $n$ is
the preimage $\varphi^{-1}(v)$ for some $\varphi: X\to S^n$ transverse to $v$ that induces the given framing,
see~\cite[Chapter 7]{Milnor:97} for details.

We will use a slight variation on this.
Let $X$ be a smooth compact $m$-manifold with (possibly non-empty) boundary $\manbound X$ and $A$ be closed. 
Let $v\neq *$ be two points in $S^n$. 
Our correspondence assigns to any smooth map $\varphi: (X,A)\to (S^n, *)$ such that $\varphi$ and $\varphi|_{\manbound X}$ 
are transverse to $v\in S^n$ the framed manifold $(\varphi^{-1}(v), \varphi^*(e_i)_i)$ where $e_i\in T_v S^n$ 
form a basis of the tangent space: note that this framed submanifold is disjoint from $A$.
A~homotopy $F: (X\times [0,1], A\times [0,1])\to (S^n, *)$ between $\varphi$ and $\psi$, such that $F$ and $F|_{\manbound X\times [0,1]}$
are both transverse to $v$ induces a framed cobordism $(F^{-1}(v), F^*(e_i)_i)$ between 
$(\varphi^{-1}(v), (\varphi^*(e_i))_i)$ and $(\psi^{-1}(v), (\psi^*(e_i))_i)$ that is disjoint from $A\times [0,1]$
and any such framed cobordism can be realized in this way.
Similarly, any framed submanifold of $X$ of codimension $n$ that is disjoint from $A$ can be realized as 
$(\varphi^{-1}(v), (\varphi^*(e_i))_i)$ for some $\varphi: (X,A)\to (S^n. *)$ such that $\varphi$ and $\varphi|_{\manbound X}$
are transverse to $v$. Similarly, any framed cobordism disjoint from $A\times [0,1]$ can be realized as 
the $v$-preimage of a smooth homotopy transverse to $v$.

Summarizing the above, there is a $1-1$ correspondence between $[(X,A), (S^n, *)]$ and framed cobordism classes of framed 
submanifolds of $X$ of codimension $n$, disjoint from $A$ via framed cobordisms in $X\times [0,1]$ disjoint from $A\times [0,1]$.

\heading{$\ZrFr(f)$ is a subset of a framed cobordism class.}
Assume that $f: X\to\R^n$ is such that $A=|f|^{-1}[r,\infty)$ and let $h$ be a~regular homotopy perturbation of $f$.
Then $h^{-1}(0)$ is disjoint from $A$ and $(h^{-1}(0), h^*(e_i)_i)$ is a framed manifold. 

Let $F: (X\times [0,1], A\times [0,1])\to (\R^n, \R^n\setminus\{0\})$ be the homotopy between $f$ and $h$.
After multiplying $F$ by a suitable positive scalar function that equals $1$ in a neighborhood of $F^{-1}(0)$, 
we get a homotopy 
$$
F': (X\times [0,1], A\times [0,1])\to (\R^n, \{x:\,|x|\geq r\})
$$
between $f$ and $h'$ so that $h'$ coincide with $h$ in a neighborhood of $h^{-1}(0)$.
The composition of $F'$ with the quotient map 
$$q: (\R^n, \{x:\,|x|\geq r\}) \to (\R^n/\{x:\,|x|\geq r\}, \{x:\,|x|\geq r\})\simeq (S^n, *)$$
gives a homotopy between $q f$ and $q h'$
as functions $(X,A) \to (S^n, *)$. The framed manifold $(h^{-1}(0), h^*(e_i))$ coincides with $((q h)^{-1}(v), (qh)^*(\tilde{e_i}))$ 
where $v=q(0)$ and $\tilde{e}_i=q_* e_i$ is the basis of the tangent space $T_v S^n$. 
It follows that $h^{-1}(0)$ is framed cobordant to $f^{-1}(0)$ by our relative version of the Pontryagin-Thom construction.

To summarize the above, any element of $\ZrFr(f)$ can be expressed as $h^{-1}(0)$ for some regular homotopy perturbation 
$h: X\to\R^n$ and the framed cobordism class induced by $h^{-1}(0)$ can be identified with $[qh']  = [h'_{/A}] = [f_{/A}]$ 
which is in the image of $\delta$. 

\heading{Surjectivity.}
Any homotopy class in the image of $\delta$ has a representative $f_{/A}$ that comes from a map $f: X\to\R^n$
that is nonzero on $A$ and after multiplying $f$ by a suitable scalar valued function, we may achieve that
$A=|f|^{-1}[r,\infty)$. Then $\ZrFr(f)$ is a framed cobordism class corresponding to $[f_{/A}]\in\mathrm{Im}(\delta)$.
This implies that the map $\ZrFr(f)\mapsto [f_{/A}]$ from Theorem C is surjective.

%\heading{From perturbations to framed cobordisms.}
%First we show the easier inclusion, namely that two elements of $\ZrFr(f)$ are framed cobordant.
%This even holds in all dimensions with no restriction on $m$ and $n$.
%
%Let $g_1$ and $g_2$ be regular $r$-perturbations of $f$.
%Then $(g_1)|_A$ is homotopic to $(g_2)|_A$ as maps $A\to\R^n\setminus\{0\}$ as they are both homotopic to $f|_A$
%via straight line homotopies. The homotopy between them can be extended to a homotopy of pairs 
%$F: (X\times [0,1], A\times [0,1])\to (\R^n, \R^n\setminus\{0\})$. 
%After multiplying $F$ by a suitable positive scalar function that equals $1$ in a neighborhood of $F^{-1}(0)$, 
%we get a homotopy 
%$$
%F': (X\times [0,1], A\times [0,1])\to (\R^n, \{x:\,|x|\geq r\})
%$$
%between $g_1'$ and $g_2'$ that coincide with $g_1$ and $g_2$ in a neighborhood of $g_1^{-1}(0)$ and $g_2^{-1}(0)$, respectively.
%Composing $F'$ with the quotient map 
%$$q: (\R^n, \{x:\,|x|\geq r\}) \to (\R^n/\{x:\,|x|\geq r\}, \{x:\,|x|\geq r\})\simeq (S^n, *)$$
%gives a homotopy between $q g_1'$ and $q g_2'$
%in $[(X,A), (S^n, *)]$. The framed manifolds $g_i^{-1}(0)$ coincide with $(q g_i)^{-1}(v)$ for $v=q(0)$, so they are framed cobordant 
%by our relative version of Pontryagin-Thom construction.
%
\heading{From cobordism to homotopy perturbations.}
To complete the proof, we will show that $\ZrFr(f)$ is the full cobordism class corresponding to $[f_{/A}]$. This also implies the
injectivity of the assignment $\ZrFr(f)\mapsto [f_{/A}]$ from Theorem C.

Thus we need to show that whenever $g$ is a regular homotopy perturbation of $f$ and $Z$ is a framed manifold
framed cobordant to $g^{-1}(0)$ via a framed cobordism disjoint from $A\times [0,1]$, then $Z$ can be realized as a zero set of some
regular homotopy perturbation $h$ of $f$. 

In what follows, we will exploit the constraint $m\leq 2n-3$ for the first time.
\begin{proof}[Rest of the proof of Theorem C]
Let $C$ be the framed cobordism between the neat $(m-n)$-submanifolds $g^{-1}(0)$ and $Z$, disjoint from $A\times [0,1]$,
such that $C\cap (X\times \{0\})=g^{-1}(0)\times \{0\}$. With a slight abuse of notation, we will use the 
identification $X\simeq X\times \{0\}$ and write $g: X\times \{0\}\to \R^n$.

Our goal is to construct a smooth map $F: X\times [0,1]\to\R^n$ such that $F$ and $F|_{\manbound X}$ are transverse to $0$ and 
the zero set $F^{-1}(0)$ is the framed cobordism $C$.
Using Lemma~\ref{l:product_n}, there exists a~product neighborhood $\mathcal{N}_1$ of $C$ diffeomorphic to $C\times B^{n}$ 
where $B^n$ is the closed unit $n$-ball, and a function $F: \mathcal{N}_1\to\R^n$ so that its framed zero set coincides with $C$.

We claim that there exists some $\epsilon>0$ such that the sub-neighborhood $\mathcal{N}\simeq C\times B^n(\epsilon)$ 
satisfies that $F|_{\partial{N}\cap (X\times \{0\})}$ and $g|_{\partial\mathcal{N}\cap (X\times \{0\})}$ 
are homotopic as maps to $\R^n\setminus \{0\}$. This is because $F$ and $g$ have equal differentials on $g^{-1}(0)$
and if $x$ is close enough to $g^{-1}(0)$, then the straight-line between $F(x)$ and $g(x)$ avoids zero.

On $\partial\mathcal{N}$, $F$ has values in $\R^n\setminus\{0\}$ and we want to extend it to a function $X\to\R^n$ that
is nonzero on $X\setminus\mathcal{N}$. 
Let $\mathcal{N}_0:=\mathcal{N}\cap (X\times \{0\})$. Exploiting that $F$ is homotopic to $g$ on $\partial\mathcal{N}_0$
and $g$ is nowhere zero on $\overline{(X\times \{0\}) \setminus \mathcal{N}_0}$, 
it follows that $F|_{\partial\mathcal{N}}$ can be extended to a nowhere zero function 
$$
\partial\mathcal{N} \cup ((\overline{X\times \{0\})\setminus \mathcal{N}_0})\to \R^n\setminus \{0\}.
$$
We want to show that it can be extended to a function $\overline{(X\times [0,1])\setminus \mathcal{N}}\to\R^n\setminus \{0\}$.
To simplify notation, let 
$U:=\overline{(X\times [0,1])\setminus \mathcal{N}}$ and 
$V:=\partial\mathcal{N} \cup ((\overline{X\times \{0\})\setminus \mathcal{N}_0})$ 
All spaces here are smooth compact manifolds (possibly with corners) and can be triangulated.
$\R^n\setminus \{0\}$ can equivalently be replaced by a~sphere $S^{n-1}$,
so we may apply obstruction theory to show that there are no obstructions to extendability. 
Assume any triangulation of the spaces and assume that $F$ has been extended to the $(k-1)$-skeleton $U^{(k-1)}\cup V$.
We want to show that there is no obstruction in extending it to the $k$-skeleton $U^{(k)}\cup V$. 
This obstruction is represented via an element of the cohomology group $H^k(U,V,\pi_{k-1}(S^{n-1}))$
and if this obstruction cohomology class vanishes, then there is an extension to the $k$-skeleton after possibly
changing the extension to the $(k-1)$-skeleton, see~\cite[Thm. 3.4]{prasolov} for details. 
We will show that the relevant cohomology groups are all trivial.

\begin{figure}
\begin{center}
\includegraphics{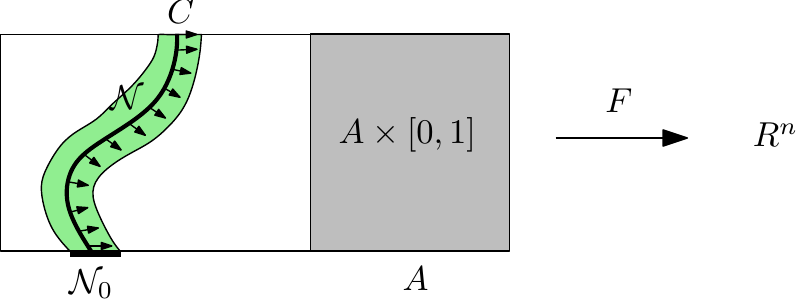}
\caption{Illustration of our construction. The function $F$ is defined so that $F^{-1}(0)$ is the framed cobordism $C$ and $\mathcal{N}$
is the product neighborhood of $C$.}
\label{fig:cob_to_hom}
\end{center}
\end{figure} 
The inclusion
$$
(U,V)=
(\overline{X\times [0,1]\setminus \mathcal{N}} , \partial\mathcal{N}\cup \overline{((X\times \{0\})\setminus\mathcal{N}_0))} 
\hookrightarrow (X\times [0,1], \mathcal{N}\cup (X\times \{0\}))
$$
induces cohomology isomorphism by excision.\footnote{We excise the interior of $\mathcal{N}$. The closure of this is $\mathcal{N}$ which is
not contained in the interior of $\mathcal{N}\cup (X\times \{0\})$, so we cannot use the common excision theorem in singular cohomology.
However, all the spaces here are smooth manifolds possibly with corners, and $X$ can be triangulated so that
$X_1:=\overline{X\times [0,1]\setminus\mathcal{N}}$ and $X_2:=(X\times \{0\})\cup\mathcal{N}$ are simplicial subcomplexes of $X$.
By \cite[Lemma 7, p. 190]{Spanier}, such pair $(X_1,X_2)$ of subcomplexes is an ``excisive couple'' and 
$(X_1,X_1\cap X_2)\hookrightarrow (X_1\cup X_2, X_2)$ induces (co)homology isomorphism.
} The long exact sequence of the triple
$(X\times [0,1], \mathcal{N}\cup (X\times \{0\}), X\times \{0\})$
implies that 
$$H^{k-1}(\mathcal{N}\cup (X\times \{0\}),\,X\times \{0\}, \pi) \simeq H^k(X\times [0,1], \mathcal{N}\cup (X\times \{0\}),\pi),$$
because $H^*(X\times [0,1], X\times \{0\}, \pi)$ is trivial with any coefficient group $\pi$.

Using another excision, this time excising $(X\times \{0\})\setminus \mathcal{N}_0$, we may replace 
$H^{k-1}(\mathcal{N}\cup (X\times \{0\}),\,X\times \{0\}, \pi)$ by $H^{k-1}(\mathcal{N}, \mathcal{N}_0, \pi)$.
This last pair can further be replaced by $H^{k-1}(C,C_0,\pi)$ where $C$ is the cobordism and $C_0:=C\cap (X\times \{0\})$.

We want to show that $H^{k-1}(C,C_0, \pi_k(S^{n-1}))$ is trivial for all $k$. For $k<n-1$, it follows from the triviality of 
$\pi_k(S^{n-1})$. For $k\geq n-1$, we note that $\dim C=m-n+1$ and the relative cohomology of $(C,C_0)$ is trivial in dimension
exceeding $m-n+1$. The constraint $m\leq 2n-3$ can be rewritten to $m-n+1< n-1$, so any $k\geq n-1$ is larger than $\dim C$.

Trivality of the obstruction groups implies that $F|_{\partial\mathcal{N}}$ can be extended to a map 
$\overline{(X\times [0,1])\setminus \mathcal{N}}\to\R^n\setminus\{0\}$.
Extending it further via the already defined map on $\mathcal{N}$, we obtain $F: X\times [0,1]\to \R^n$ that is smooth in a neighborhood of $C$,
its zero set is $C$ and it induces the framing of $C$. The map $h:=F(\cdot, 1)$ is smooth in a neighborhood of $Z$, induces the framing of
$Z$, and $(h/|h|)|_A$ is homotopic to $(g/|g|)_A$ via the homotopy $F|_{A\times [0,1]}$.
Possibly replacing $h$ with a perturbation that is smooth everywhere and unchanged in a neighborhood of $Z$
yields the desired homotopy perturbation of $g$.
\end{proof}

\heading{Acknowledgments.} The research leading to these results has received funding from Austrian Science Fund (FWF): M 1980,
the People Programme (Marie Curie Actions) of the European Unionâs Seventh Framework Programme
(FP7/2007-2013) under REA grant agreement number [291734] and from the Czech Science Foundation
(GACR) grant number 15-14484S with institutional support RVO:67985807.
The research by Marek Kr\v cá\'al was also supported by the project 17-09142S of GA \v{C}R.
We are grateful to Sergey Avvakumov, Ulrich Bauer, Marek Filakovsk\'y, Amit Patel,  
Luk\'a\v s Vok\v r\'inek and Ryan Budnay for useful discussions and hints.
\bibliography{../sratscha,../Postnikov}

\appendix \newpage
\section{Rest of the proof of Theorem A.}
\label{s:rest_thm_A}

\heading{Case $Z_{\leq r}(f)$, part (1).}
Here we give the proof of part (1) of Theorem~\ref{t:hom-class} for the more technical case of zero sets of non-strict perturbations $Z_{\leq r}(f)$.
We will call functions $g$ with $\|g-f\|\leq r$ \emph{non-strict $r$-perturbations} of $f$ and functions $g$ for which $\|g-f\|<r$ \emph{strict} $r$-perturbations.

Assume that $B=|f|^{-1}(r)$ is given in addition to $A=|f|^{-1}[r,\infty)$. These two subspaces of $X$ determine $W:=|f|^{-1}[0,r]$
which can be expressed as $(X\setminus A)\cup B$. The zero set of each non-strict $r$-perturbation is contained in $W$.
Let us denote by $M$ the mapping cylinder of the inclusion $A \hookrightarrow X$, that is, $M:=X\times\{0\}\cup A\times [0,1]$
with the identifications $A\simeq A\times \{1\}\subseteq M$ and $X\simeq X\times \{0\}\subseteq M$.
Statement $(1)$ of Theorem~\ref{t:hom-class} is an immediate consequence of the following lemma.
\begin{lemma}
\label{l:nonstrict}
$Z_{\leq r}(f)$ equals to the family
$$
\{h^{-1}(0)\mid h:M\to\R^n\text{ s.t. } h|_{A\times\{1\}}=f|_A
\text{ and } h^{-1}(0)\subseteq W\times \{0\}\}.$$
\end{lemma}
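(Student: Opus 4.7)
The plan is to establish the two inclusions separately.

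For the forward inclusion, given a non-strict $r$-perturbation $g$ of $f$, I would define $h:M\to\R^n$ by $h|_{X\times\{0\}} := g$ and $h(a,t) := (1-t)\,g(a) + t\,f(a)$ on $A\times[0,1]$. Continuity at the gluing $A\times\{0\} = A \subseteq X$ is automatic since both formulas give $g(a)$, and the condition $h|_{A\times\{1\}} = f|_A$ holds by construction. To verify $h^{-1}(0) \subseteq W\times\{0\}$, note first that $g^{-1}(0) \subseteq W$ because $\|g-f\|\leq r$ forbids $g$ from vanishing on $\{|f|>r\}$. A putative cylinder zero at $(a,t)$ with $t\in(0,1]$ would force either $f(a)=0$ (impossible for $a\in A$) or $g(a)$ and $f(a)$ to be negatively proportional, yielding $|g(a)-f(a)| = |f(a)|/(1-t) > |f(a)| \geq r$, contradicting $\|g-f\|\leq r$. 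Hence $h^{-1}(0) = g^{-1}(0)$.

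For the reverse inclusion, given an $h$ from the right-hand side with $g_0 := h|_{X\times\{0\}}$ and $Z := h^{-1}(0) = g_0^{-1}(0) \subseteq W$, my plan is to adapt the (C)$\subseteq$(A) argument from Lemma~\ref{l:perturb-ext}. Fix a small $\delta>0$ and a continuous $\phi:X\to[0,1]$ with $\phi\equiv 0$ on $W$ and $\phi\equiv 1$ on $\{|f|\geq r+\delta\}$, extended by $0$ outside $A$ so that $h(x,\phi(x))$ is well-defined. Set $\tilde h(x) := h(x,\phi(x))$: a direct check using $h^{-1}(0)\subseteq W\times\{0\}$ gives $\tilde h^{-1}(0) = Z$, while $\tilde h = f$ on $\{|f|\geq r+\delta\}$. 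Then I would define $g := \chi\,\tilde h$ for a separating function $\chi:X\to(0,1]$ equal to $1$ on $\{|f|\geq r+\delta\}$ and decaying to a small positive value on $W$; since $\chi>0$, we have $g^{-1}(0) = Z$, and the region-by-region distance estimates from Lemma~\ref{l:perturb-ext} yield $\|g-f\|\leq r$, with equality attained exactly at zeros of $g$ lying on $B = \{|f|=r\}$.

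The main obstacle will be managing the boundary $B$: zero sets of \emph{strict} $r$-perturbations never meet $B$ since any extension of $f|_A$ is nonzero on $A\supseteq B$, but $Z\cap B$ may be nonempty in our setting. Accommodating this is why $\tilde h$ is only a partial extension of $f|_A$ (coinciding with $f$ on the shell $\{|f|\geq r+\delta\}$ rather than all of $A$) and why the distance bound must relax from $<r$ to $\leq r$ on $Z\cap B$. The technical core is verifying continuity of $\tilde h$ and controlling $|\tilde h - f|$ on the transition region $\{r\leq|f|<r+\delta\}$ via uniform continuity of $h$ on the compact cylinder $A\times[0,1]$; $\delta$ and the decay rate of $\chi$ must be chosen small enough that these estimates close up with bound $\leq r$ globally.
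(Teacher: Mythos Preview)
Your forward inclusion is fine and matches the paper's argument.

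The reverse inclusion has a genuine gap. The problem is on $B=\{|f|=r\}$: since $\phi$ must vanish on $X\setminus A$, continuity forces $\phi=0$ on $B$ as well, so $\tilde h|_B = g_0|_B$. But nothing constrains the \emph{direction} of $g_0$ on $B$; it can point opposite to $f$. Concretely, take $X=\{z\in\C:|z|\le 2\}$, $f(z)=z$, $r=1$, and let $h(z,0)=g_0(z):=-z$ on $X$ and $h(a,t)=e^{i\pi(1-t)}a$ on $A\times[0,1]$. Then $h^{-1}(0)=\{0\}\subseteq W\times\{0\}$ and $h|_{A\times\{1\}}=f|_A$, so $h$ lies in the right-hand family. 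Yet on $B=\{|z|=1\}$ your $g=\chi\tilde h=\chi g_0=-\chi z$ satisfies $|g(z)-f(z)|=(1+\chi(z))>1=r$ for every $\chi(z)>0$. No choice of $\delta$ or decay rate rescues this: the failure occurs at the single level set $|f|=r$, where $\phi$ is pinned to $0$ and uniform continuity of $h$ in $t$ is irrelevant.

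What is missing is the use of the homotopy $h|_{A\times[0,1]}$ to \emph{pull $g_0$ back to $f$ on $A$ away from $Z$} before scaling. The paper does exactly this: for shrinking neighborhoods $O_j$ of $Z$ it uses the homotopy extension property to produce maps $g_j$ that agree with $f$ on $A\setminus O_j$ (so Lemma~\ref{l:perturb-ext} applies with $A$ replaced by $A\setminus O_j$), obtains strict $r$-perturbations of slightly shrunken functions $f_j\to f$, and passes to the limit. The limiting step is essential precisely because $Z$ may meet $B$, so one cannot align $g_0$ with $f$ on all of $A$ in a single stroke. Your construction attempts to bypass this by sliding along the homotopy only on the shell $\{r\le|f|\le r+\delta\}$, but that leaves $B$ itself untouched, which is where the estimate breaks.
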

The maps $h\:M\to\R^n$ as above will be called \emph{homotopy perturbations of $f$.} 
If $g\sim f$ are homotopic as maps $A\to \R^n\setminus \{0\}$, then the family of zero sets 
of homotopy perturbations of $g$ is equal to the family of zero sets of homotopy perturbations of $f$.  
\begin{proof}[Proof of Lemma~\ref{l:nonstrict}]
First assume that a map $g: X\to\R^n$ satisfies $\|g-f\|\leq r$. 
Then the map $h\:M\to\R^n$ that is equal to $g$ on $X\simeq X\times \{0\}$ and to the straight line homotopy 
between $g$ and $f$ on $A\times[0,1]$ is a homotopy perturbation of $f$ with $h^{-1}(0)=g^{-1}(0)$: this homotopy
is clearly nonzero on $A\times (0,1]$ and $h^{-1}(0)\subseteq W\simeq W\times \{0\}$.

Conversely, assume that a homotopy perturbation $h\:M\to\R^n$ of $f$ is given.
We will denote by $h'$ the restriction $h|_X$. Let us define $O_j:=|h'|^{-1}[0,1/j)$.  These sets are open neighborhoods of $h^{-1}(0)$ in $X$, the intersection of all $O_j$ is the zero set $h^{-1}(0)$ and $\bar{O}_{j+1}\subseteq O_j$ (consequently $\bar{O}_{j+1}$ is disjoint from $X\setminus O_j$).
Let as define a partial homotopy $G_1'$ of $h'|_{X\setminus O_2}$ on $(A\setminus O_1)\cup \partial O_2$ as follows.
We define $G_1'$ to be equal to $h$ on $(A\setminus O_1)\times [0,1]$ and to be the stationary homotopy equal to $h'$ on $\partial O_2$
(that is, $G_1'(o,t)=h'(o)$ for $o\in \partial O_2$ and $t\in [0,1]$).

%The map $G_1'(\cdot,0)$ is a restriction of $h$ and can be extended to $h|_{X\setminus O_2}$. 
The partial homotopy $|G_1'|$ is bounded from below and above by positive constants
$m$ and $M$, so we can define the target space of $G_1'$ to be the triangulated space $T=\{x\in\R^n:\,m\leq |x|\leq M\}$. 
The homotopy extension property of the pair 
$(X\setminus O_2, (A\setminus O_1)\cup \partial O_2)$ with respect to $T$ implies that $G_1'$ can be extended to a nowhere zero map
$G_1: (X\setminus O_2)\times[0,1]\to T$ such that $G_1(\cdot,0)=h'|_{X\setminus O_2}$.

Inductively, we define homotopies $G_j: (X\setminus O_{j+1})\times[0,1]\to\R^n\setminus \{0\}$ such that
\begin{itemize}
\item $G_j$ equals $G_{j-1}$ on $(X\setminus O_{j-1})\times [0,1]$,
\item $G_j=h$ on $(A\setminus O_j)\times [0,1]$, 
\item $G_j$ is the stationary homotopy equal to $h'|_{\partial O_{j+1}}$ on $\partial O_{j+1}$, and
\item $G_j(0)=h'|_{X\setminus O_{j+1}}$.
\end{itemize}
Let $G_j'$ be a partial homotopy of $h'|_{X\setminus O_{j+1}}$ on $(X\setminus O_{j-1})\cup (A\setminus O_{j})\cup (\partial O_{j+1})$
defined by the first three properties of $G_j$ above. 
This is well defined and continuous, because $G_{j-1}$ equals $h$ on $(A\setminus O_{j-1})\times [0,1]$, and $\partial O_{j+1}$ is disjoint from
the other two parts. %The map $h'|_{X\setminus O_{j+1}}$ is an extension of $G_j'(0)$, so 
By the homotopy extension property, there exists
a homotopy $G_j: (X\setminus O_{j+1})\times[0,1]\to\R^n\setminus\{0\}$ satisfying all four properties above.

Let as define maps $g_j: X\to\R^n$ by 
$$
g_j(x)=\begin{cases}
G_j(x,1)\quad\text{ for $x\in X\setminus O_{j+1}$ },\\
h'(x)\quad\text{ for $x\in \bar{O}_{j+1}$.}
\end{cases}
$$
These maps are continuous and satisfy
\begin{itemize}
\item $g_j=g_{j-1}$ outside $O_{j-1}$,
\item $g_j^{-1}(0)=h^{-1}(0)$,
\item $g_j$ is an extension of $f|_{A\setminus O_j}$.
\end{itemize}

For each $j>1$ we define the constant 
$$c_j:=\frac{j-1}{j} \,\min\{1, \frac{r}{\max_{\overline{O}_j} |f|}\}.$$
The zero set $h^{-1}(0)$ is contained in $W$ where $|f|\leq r$, so the maximum of $|f|$ on the shrinking neighborhoods
$\overline{O}_j$ of $h^{-1}(0)$ converge to $\leq r$ and it follows that $c_j<1$ and $c_j\to 1$ as $j\to\infty$.

Let $\alpha_j: X\to [c_j,1]$ be so that $\alpha_j=1$ outside $O_{j-1}$ and $\alpha_j=c_j$ inside $O_j$.
Define $f_j\:X\to\R^n$ by $f_j:=\alpha_j\,f$. We have that $|f_j|<r$ on $O_j\cup (X\setminus A)$ and hence 
$|f_j|^{-1}[r, \infty)\subseteq A\setminus O_j$. Further, $\|f_j-f\|\to 0$ follows from $\alpha_j\in [c_j, 1]$.

The map $\alpha_j g_j$ is an extension of $f_j|_{A\setminus O_j}$ and hence an extension of $f_j|_{|f_j|^{-1}[r, \infty)}$, 
so by Lemma~\ref{l:perturb-ext}, some positive scalar multiple
$\beta_j g_j$ of $g_j$ is a strict $r$-perturbation of $f_j$. We will show that $\beta_j\:X\to(0,1]$ may be chosen so that they additionally satisfy
\begin{itemize}
\item $\beta_j=\beta_{j-1}$ outside $O_{j-1}$ (and hence $\beta_j g_j=\beta_{j-1} g_{j-1}$ outside $O_{j-1}$), 
\item $|\beta_j g_j|\leq \frac{1}{j}$ in $\bar{O}_{j}$, and
\item $|\beta_j g_j|\leq |\beta_{j-1}\,g_{j-1}|$ on $X\setminus O_j$.
\end{itemize}

Assume that such $\beta_1,\ldots, \beta_{j-1}$ have been chosen. 
Because $g_{j}=g_{j-1}$ and $f_{j}=f_{j-1}$ outside $O_{j-1}$, we have
$\beta_{j-1} g_{j}=\beta_{j-1} g_{j-1}$ and thus $\beta_{j-1}\,g_j$ is a strict $r$-perturbation of $f_{j}$ in $X\setminus O_{j-1}$. 
If $\beta_{j}^{'}$ is so that $\beta_{j}'\,g_{j}$ is a global strict $r$-perturbation of $f_{j}$, 
we may define $\beta_{j}''$ to be a positive scalar extension of $\beta_{j}'$ in $\bar{O}_{j}$ and 
of $\beta_{j-1}$ on $X\setminus O_{j-1}$. Then $\beta_j'' g_j$
is a strict $r$-perturbation of $f_{j}$ on $\bar{O}_{j} \cup X\setminus O_{j-1}$.
Furthermore, $\beta_{j}''\,g_{j}$ is a strict $r$-perturbation of $f_{j}$ on some open neighborhood $U$ of $X\setminus O_{j-1}$.
By multiplying $\beta_{j}''$ with a $(0,1]$-valued function that equals $1$ on $X\setminus O_{j-1}$, and is small enough in
${O}_{j-1}\setminus U$, we get a positive function $\beta_j'''$ such that $\beta_{j}'''\leq \beta_{j}'$ in ${O}_{j-1}\setminus U$ 
and that $|\beta_{j}'''\,g_{j}|\leq \frac{1}{j}$ in $\bar{O}_{j}$.  
The resulting $\beta_{j}'''\,g_{j}$ is still a strict $r$-perturbation of $f_{j}$ on $X$ since each $\beta'''_j(x)g_j(x)$ is a strict convex combination of $\beta'_j(x)g_j(x)$ (less than $r$-far from $f_j(x)$) and $0$ (at most $r$-far from $f_j(x)$).
Finally, we multiply $\beta_j'''$ 
by some positive extension $X\to (0,1]$ of the function $\min \{1, |\beta_{j-1} g_{j-1}|/|\beta'''_{j} g_{j}|\}$ defined on $X\setminus O_j$
to get the desired function $\beta_j$ and then $\beta_j\,g_j$ is a strict $r$-perturbation of $f_j$ 
satisfying all the three properties above.% including $|\beta_j g_j|\leq |\beta_{j-1}\,g_{j-1}|$ on $X\setminus O_j$.

Let $g(x):=\lim_j \beta_j(x)\,g_j(x)$ for all $x\in X$. We will show that this is well defined and continuous.
If $h(x)\neq 0$, then some neighborhood $U(x)$ of $x$ is contained in $X\setminus O_j$ for $j$ large enough
and for any $y\in U(x)$, $\beta_j(y) g_j(y)=\beta_{j+1}(y)\,g_{j+1}(y)=\ldots$ is stabilized. 
Further, if $h(x)=0$ then for each $j$, some neighborhood of $x$ is contained in $O_j$ and $|\beta_i\,g_i|\leq 1/j$ for each $i>j$ on this neighborhood.
This shows that $g(x)=0$ and $g$ is continuous in $x$. 

By construction, $g^{-1}(0)=h^{-1}(0)$ and the inequality $|\beta_j(x)\,g_j(x) - f_j(x)|<r$ implies that $|g(x)-f(x)|\leq r$ holds for each $x\in X.$
 \end{proof}

\heading{Case $Z_{\leq r}(f)$, part (2).}
The proof of the second part of Theorem~\ref{t:hom-class} is similar to the $<r$ part and the $m\leq 2n-4$ case 
immediately follows from this analog of Lemma~\ref{l:stronger}:
\begin{lemma}
\label{l:stronger<=}
Let $B\subseteq A\subseteq X$ be cell complexes, $m:=\dim X\leq 2n-3$, $r>0$ and let $f_1, f_2: X\to\R^n$ be such that 
$A=|f_1|^{-1}[r,\infty)=|f_2|^{-1}[r, \infty)$, $B=|f_1|^{-1}(r)=|f_2|^{-1}(r)$. Assume further that $A$
has dimension at most $2n-4$ and
let $f_1\boxplus f_2$ be a~representant of the sum of $[f_1]$ and $[f_2]$ in the group $[A,S^{n-1}]$. Then 
$$
Z_{\leq r}(f_1\boxplus f_2)\supseteq \{Z_1\cup Z_2:\,\,Z_1\in Z_{\leq r}(f_1)\,\, and\,\, Z_2\in Z_{\leq r}(f_2) \}
$$
\end{lemma}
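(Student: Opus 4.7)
The plan is to mirror the proof of Lemma~\ref{l:stronger} but to perform the cellular merging on the mapping cylinder $M=(X\times\{0\})\cup(A\times[0,1])$ from Lemma~\ref{l:nonstrict} rather than on $X$. The strict-case argument does not apply verbatim because in the non-strict setting $Z_1\cup Z_2$ may intersect $B\subseteq A$, so the sphere-valued normalizations of continuous extensions of $f_i$ to $X$ need not extend continuously across those zeros on $A$. First, invoke Lemma~\ref{l:nonstrict} to pick homotopy perturbations $h_i\:M\to\R^n$ of $f_i$ with $h_i|_{A\times\{1\}}=f_i|_A$ and $h_i^{-1}(0)=Z_i\subseteq W\times\{0\}$, where $W=|f_1|^{-1}[0,r]=|f_2|^{-1}[0,r]$ is the same set for $i=1,2$. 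Setting $Z:=Z_1\cup Z_2$, both normalizations $\bar h_i:=h_i/|h_i|\:M\setminus Z\to S^{n-1}$ are simultaneously defined on $M\setminus Z$ since each $h_i$ is nowhere zero on $M\setminus Z_i\supseteq M\setminus Z$.

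Next, carry out the cellular-approximation merging of Lemma~\ref{l:stronger} inside $M\setminus Z$. Fix a triangulation of $M$ compatible with those of $X$ and $A$; exhaust $M\setminus Z$ by an increasing sequence of compact simplicial subcomplexes $Y_0\subseteq Y_1\subseteq\cdots$ with $\bigcup_iY_i=M\setminus Z$, constructed exactly as in Lemma~\ref{l:stronger} from PL approximations of $\dist_M(Z,\cdot)$ together with successive subdivisions ensuring each $Y_{i-1}$ is a subcomplex of $Y_i$; and choose compatible cellular approximations $a_i\:Y_i\to S^{n-1}\times S^{n-1}$ of $(\bar h_1,\bar h_2)|_{Y_i}$ with $a_i|_{Y_{i-1}}=a_{i-1}$. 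Since $\dim M\leq\max(m,\dim A+1)\leq 2n-3$, each $a_i$ lands in the $(2n-3)$-skeleton $S^{n-1}\vee S^{n-1}$ of $S^{n-1}\times S^{n-1}$, so $\bar h:=\bigcup_i\nabla\circ a_i$ (with $\nabla$ the folding map) defines a continuous map $M\setminus Z\to S^{n-1}$. Setting $h:=\bar h\cdot\dist_M(Z,\cdot)$ and extending by zero on $Z$ produces a continuous $h\:M\to\R^n$ with $h^{-1}(0)=Z\subseteq W\times\{0\}$.

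To conclude, compare $h$ with $f_1\boxplus f_2$ on the top $A\times\{1\}$ of the cylinder. Since $\bar h_i|_{A\times\{1\}}=\bar f_i|_A$, the restriction $\bar h|_{A\times\{1\}}$ is a representative of $[\bar f_1]\boxplus[\bar f_2]=[\overline{f_1\boxplus f_2}|_A]$, where well-definedness of the sum class uses $\dim A\leq 2n-4$. Hence $h|_{A\times\{1\}}$ is homotopic in $\R^n\setminus\{0\}$ to $(f_1\boxplus f_2)|_A$; splicing such a nowhere-zero homotopy into a collar $A\times[1-\varepsilon,1]\subseteq M$, after a reparametrization of the interval, yields a new map $h^*\:M\to\R^n$ with $h^*|_{A\times\{1\}}=(f_1\boxplus f_2)|_A$ and $(h^*)^{-1}(0)=Z\subseteq W\times\{0\}$ (the modification is confined to $A\times[1-\varepsilon,1]$, which is disjoint from $Z$). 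Lemma~\ref{l:nonstrict} applied to $f_1\boxplus f_2$ then gives $Z\in Z_{\leq r}(f_1\boxplus f_2)$, as desired. The main technical obstacle throughout is the one flagged at the outset: because $Z$ can touch $A$ on $B$, the merging has to be done on $M\setminus Z$ rather than on $X\setminus Z$, and the matching of homotopy classes with $\bar f_1\boxplus\bar f_2$ has to be performed at the free end $A\times\{1\}$ where $h$ is guaranteed to be nowhere zero by construction, using a collar modification to convert the homotopy into an equality on the level $t=1$.
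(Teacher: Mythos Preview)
Your proof is correct and follows essentially the same route as the paper's: both lift the argument of Lemma~\ref{l:stronger} to the mapping cylinder $M$, exhaust $M\setminus Z$ by subcomplexes $Y_i$, take compatible cellular approximations $a_i$ of $(\bar h_1,\bar h_2)$ (using $\dim M\le 2n-3$), fold, and multiply by a distance function. The only cosmetic difference is that you explicitly splice a collar homotopy on $A\times[1-\varepsilon,1]$ to force $h^*|_{A\times\{1\}}=(f_1\boxplus f_2)|_A$, whereas the paper instead invokes the remark after Lemma~\ref{l:nonstrict} that the family of zero sets of homotopy perturbations depends only on the homotopy class of $f|_A$.
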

We already know that if $A,B$ are given, then $Z_{\leq r}(f_1\boxplus f_2)$ depends only on the homotopy class of $[f_1\boxplus f_2]$ 
so the left hand side is well defined.
\begin{proof}
Let $Z_i\in Z_{\leq r}(f_i)$ and $M=A\times [0,1]\cup X\times \{0\}$. By Lemma~\ref{l:nonstrict} there exist $h_1, h_2: M\to\R^n$ 
such that $(h_i)|_{A\times \{1\}}=(f_i)|_A$ and $h_i^{-1}(0)=Z_i$ for $i=1,2$. Let $Z:=Z_1\cup Z_2$ 
and $\bar h_i: M\setminus Z\to S^{n-1}$ be defined by $h_i/|h_i|$. Similarly as in the proof of Lemma~\ref{l:stronger},
we choose a sequence of cell complexes $Y_0\subseteq Y_1\subseteq \ldots \subseteq M$ such that $Y:=\cup Y_i=M\setminus Z$
and $Y_{i-1}$ is a subcomplex of $Y_{i}$, and cellular approximations $a_i: Y_i\to S^{n-1}\times S^{n-1}$ of $(\bar h_1, \bar h_2)$.
Such cellular approximation is possible, because $\dim M\leq 2n-3<2n-2$, although its homotopy class may depend on the choice of
the approximation. However, the homotopy class of the restriction $a_i|_{A}$ is already well defined due to the constraint $\dim A\leq 2n-4$. 
The composition of $a_i$ and the folding map $\nabla: S^{n-1}\vee S^{n-1}\to S^{n-1}$ define a chain of maps
$\nabla a_i: Y_i\to S^{n-1}$ and $\bar h:=\cup \nabla a_i$ is map $M\setminus Z\to S^{n-1}$ such that
$\bar h|_{A\times \{1\}}\sim (\bar f_1)|_A\boxplus (\bar f_2)|_A$. 
Multiplying $\bar h$ by the distance function $\mathrm{dist}(Z,\cdot)$
we obtain a suitable $h: M\to\R^n$ which is a~homotopy perturbation of $f_1\boxplus f_2$ and its zero set is contained in $Z_{\leq f}(f_1\boxplus f_2)$
by Lemma~\ref{l:nonstrict}.
 \end{proof}

If $m\leq 2n-4$ then it follows from the last Lemma that for any $f_2\in i^*[X,S^{n-1}]$---which implies 
$\emptyset\in Z_{\leq r}(f_2)$---we have that $Z_{\leq r}(f_1\boxplus f_2)\supseteq Z_{\leq r}(f_1)$ and the extendability of
$\boxminus f_2$ then implies 
$$
Z_{\leq r}(f_1)=Z_{\leq r}(f_1\boxplus f_2\boxminus f_2)\supseteq Z_{\leq r}(f_1\boxplus f_2)
$$
which proves part (2) of Theorem~\ref{t:hom-class}.

In the case $m=2n-3$ we again take $A'=\partial A$ which has dimension at most $2n-4$, $X'=\overline{X\setminus A}$ and
$f': X'\to\R^n$ to be the restriction $f|_{X'}$. The homotopy class $[f_{/A}]$ determines the homotopy class of $[f'_{/A'}]$
and the following lemma implies that $Z_{\leq r}(f)$ is uniquely determined by $Z_{\leq r}(f')$, $A$ and $B$. This will complete the proof
of Theorem~\ref{t:hom-class}.
\begin{lemma}
\label{l:f'/A'->f/A}
Let $X$ be a cell complex, $f: X\to\R^n$ and $X',A',A$ and $B$ be defined as above. Then
$$
Z_{\leq r}(f)=\{U\cup V:\,U\in Z_{\leq r}(f')\,\,\text{and}\,\,V\subseteq B \text{ is closed }\}.
$$
\end{lemma}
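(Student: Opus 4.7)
My plan is to prove the two inclusions $\subseteq$ and $\supseteq$ separately, with the nontrivial content in the latter. The $\subseteq$ direction is essentially free: for any non-strict $r$-perturbation $g\:X\to\R^n$ of $f$, the zero set $g^{-1}(0)$ lies in $W=X'$, so $g|_{X'}$ is a non-strict $r$-perturbation of $f'$ with the same zero set, and one takes $U=g^{-1}(0)$ and $V=\emptyset$.

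For $\supseteq$, given $U\in Z_{\leq r}(f')$ realized by some $g'\:X'\to\R^n$ with $(g')^{-1}(0)=U$ together with a closed $V\subseteq B$, I plan to build the desired $g\:X\to\R^n$ in two steps. The first step absorbs $V$ into the zero set on the $X'$-side: choose by Urysohn a continuous $\beta\:X'\to[0,1]$ with $\beta^{-1}(0)=V$, set $g'':=\beta g'$, and verify using $|f'|\leq r$ on $X'=W$ that the decomposition $g''-f'=\beta(g'-f')+(\beta-1)f'$ yields
$$|g''-f'|\leq\beta|g'-f'|+(1-\beta)|f'|\leq r,$$
making $g''$ again a non-strict $r$-perturbation of $f'$, now with zero set $U\cup V$.

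The second step extends $g''$ across the interior of $A$ without creating new zeros. Using the cell structure on the pair $A\subseteq X$, I choose a regular neighborhood $\mathcal{N}$ of $B$ in $A$, a retraction $p\:\mathcal{N}\to B$, and a separating function $\chi\:A\to[0,1]$ with $\chi|_B=1$ and $\chi|_{A\setminus\mathcal{N}}=0$, and define $g$ piecewise by
$$g(x):=\begin{cases} g''(x)&\text{if }x\in X',\\ f(x)+\chi(x)\bigl(g''(p(x))-f(p(x))\bigr)&\text{if }x\in\mathcal{N},\\ f(x)&\text{if }x\in A\setminus\mathcal{N}.\end{cases}$$
Continuity follows from $\chi=1$, $p=\mathrm{id}$ on $B$ and from $\chi=0$ on $\partial\mathcal{N}$, and the $r$-bound reduces on $\mathcal{N}$ to $\chi(x)|g''(p(x))-f(p(x))|\leq r$, which is clear.

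The main obstacle—and the reason for this specific form of the extension—is ensuring that no new zeros arise in the bulk of $A$. If $x\in\mathcal{N}$ with $g(x)=0$, then $|f(x)|=\chi(x)|g''(p(x))-f(p(x))|\leq r$, but on $A$ one has $|f|\geq r$, so equality forces $x\in B$, whence $p(x)=x$, $\chi(x)=1$, and $g''(x)=0$, placing $x$ in $U\cup V$. Combined with the $X'$-side computation, this yields $g^{-1}(0)=U\cup V$ exactly. In short, the extension formula is engineered precisely to respect the $r$-bound while leveraging the sharp inequality $|f|\geq r$ on $A$ to pin the zero set down to $U\cup V$.
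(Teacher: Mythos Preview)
Your direct construction is more elementary than the paper's mapping-cylinder argument (which invokes Lemma~\ref{l:nonstrict} and an infinite zig-zag of homotopy extensions), but it has a real gap: the identification $W=X'$ is not valid in general. One only has $X'=\overline{X\setminus A}\subseteq W=|f|^{-1}[0,r]$, and the difference $W\setminus X'=B\setminus A'$ consists of points with $|f|=r$ lying in the \emph{interior} of $A$ --- for instance, where $|f|$ attains a local minimum equal to $r$. The lemma is stated precisely to cover this non-generic situation (this is why $B$ is supplied in addition to $A$), and your argument breaks there. In Step~1 the Urysohn function $\beta$ with $\beta^{-1}(0)=V$ requires $V\subseteq X'$, but $V$ is an arbitrary closed subset of $B$ and may meet $B\setminus A'$. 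In Step~2, $g''(p(x))$ is undefined once $p(x)\in B\setminus X'$; and even if you retract onto $A'$ instead of $B$, the construction provides no mechanism to place prescribed zeros at points of $V\cap(B\setminus A')$: your zero-set analysis correctly pins any zero on $\mathcal{N}$ down to $B$, but the inference ``$x\in B\Rightarrow p(x)=x,\ \chi(x)=1$'' is valid only on the retract, not on all of $B$. (The $\subseteq$ direction is affected too but is repaired trivially by taking $U=g^{-1}(0)\cap X'$ and $V=g^{-1}(0)\cap B$.)

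A secondary issue is that Step~2 assumes $B$ admits a neighborhood retraction $p\:\mathcal{N}\to B$ inside $A$; for a merely continuous $f$ the level set $B=|f|^{-1}(r)$ need not be a neighborhood retract. The paper sidesteps both difficulties by never retracting onto $B$: it extends a sphere-valued map across $(A\setminus A')\times[0,1]$ away from a shrinking system of neighborhoods $O_j$ of $U\cup V$, and manufactures the zeros at $V$ --- including those in $B\setminus A'$ --- only at the very end by multiplying with the distance to $U\cup V$.
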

\begin{proof}
If $Z=g^{-1}(0)$ for some non-strict $r$-perturbation $g$ of $f$ then $Z=U\cup V$ where $U$ is the zero set of the restriction $g|_{X'}$
and $V:=B\cap g^{-1}(0)$ is closed. 

Conversely, let $U=g'^{-1}(0)$ for some non-strict $r$-perturbation $g'$ of $f'$ and let $V\subseteq B$ be closed.
Let $M=A\times [0,1]\cup X\times \{0\}$ be the mapping cylinder of $A\subseteq X$ and $M'=A'\times [0,1]\cup X'\times \{0\}$ its subset.
By Lemma~\ref{l:nonstrict} there exists $h': M'\to\R^n$ such that $h'|_{A'}=f'|_{A'}$ and $U=h'^{-1}(0)$.
Let $\bar h':=h'/|h'|$ be the sphere valued map $M'\setminus U\to S^{n-1}$. 

We will show that it can be extended to a map $\bar h: M\setminus (U\cup V)\to S^{n-1}$ such that 
$\bar h|_{A\times \{1\}}=\bar f|_A$. 
To define $\bar h$ on $((A\setminus A')\times [0,1])\setminus V$, will use a zig-zag sequence of partial extensions. 
Let $\{O_j\}_j$ be a collection of open neighborhoods of $U\cup V$ in $X$ such that 
$\overline{O}_{j+1}\subseteq O_j$ and $\cap_j O_j=U\cup V$. 
\begin{figure}
\begin{center}
\includegraphics{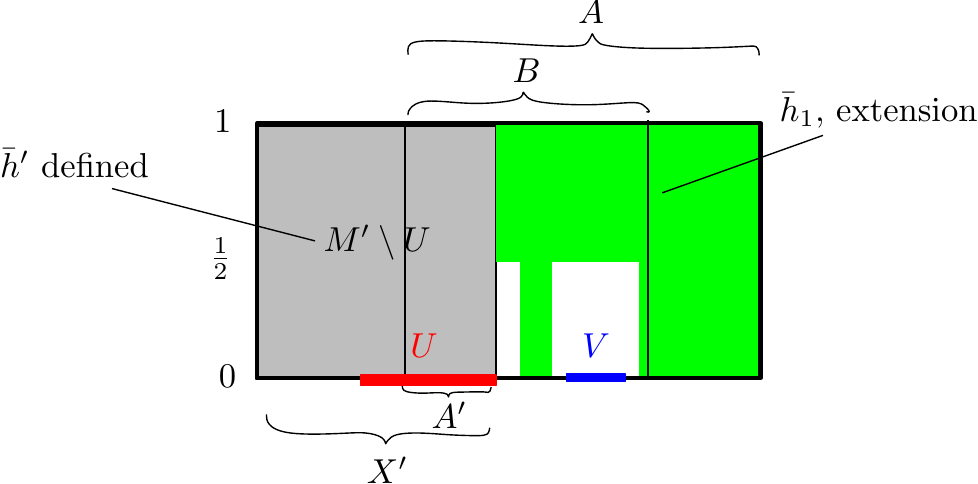}
\caption{Illustration of the sequence of partial extensions in Lemma~\ref{l:f'/A'->f/A}.}
\label{fig:aux_lem}
\end{center}
\end{figure}

Let 
$$\bar h_1': (M'\setminus U)\cup (A\times [\frac{1}{2}, 1])\to S^{n-1}$$ be an extension of 
$\bar h'$ such that 
$\bar h_1'|_{A\times \{1\}}=\bar f|_{A\times \{1\}}$ and let 
$$\bar h_1: (M'\setminus U)\cup (A\times [\frac{1}{2}, 1])\cup ((X\setminus O_1)\times [0,1])\to S^{n-1}$$
be a further extension of $\bar h_1'$: these extensions exist by the homotopy extension property.
Inductively, define $\bar h'_{j}$ to be a map 
$(M'\setminus U)\cup (A\times [\frac{1}{j+1}, 1])\cup ((X\setminus O_{j-1})\times [0,1])\to S^{n-1}$
that extends $\bar h_{j-1}$ and $\bar h_j$ be an extension of this map that is also defined on $(X\setminus O_j)\times [0,1]$
where it extends $\bar h'|_{(A'\setminus O_j)\times [0,1]}$ and $\bar h_{j}'$.
The union $\cup \bar h_j$ is the desired map 
$\bar h\: M\setminus (U\cup V)\to S^{n-1}$. Multiplying $\bar h$ by the scalar function $\mathrm{dist}(U\cup V, \cdot): M\to\R_0^+$
we obtain a homotopy perturbation of $f$ with the desired zero set and it follows from Lemma~\ref{l:nonstrict} that
$U\cup V\in Z_{\leq r}(f)$.
 
\end{proof}

\end{document}